\documentclass[11pt]{article}

\usepackage[margin=1in]{geometry}
\usepackage[T1]{fontenc}
\usepackage[utf8]{inputenc}
\usepackage{lmodern}
\usepackage{microtype}
\usepackage{amsmath,amssymb,amsthm,mathtools}
\usepackage{enumitem}
\usepackage{booktabs}
\usepackage[hypertexnames=false]{hyperref}

\hypersetup{
  colorlinks=true,
  linkcolor=blue,
  citecolor=blue,
  urlcolor=blue
}

\allowdisplaybreaks
\numberwithin{equation}{section}

\newtheorem{theorem}{Theorem}[section]
\newtheorem{proposition}[theorem]{Proposition}
\newtheorem{lemma}[theorem]{Lemma}
\newtheorem{corollary}[theorem]{Corollary}
\theoremstyle{remark}
\newtheorem{remark}[theorem]{Remark}

\newcommand{\R}{\mathbb R}
\newcommand{\Prob}{\mathbb P}
\newcommand{\E}{\mathbb E}
\newcommand{\1}{\mathbf 1}
\newcommand{\supp}{\operatorname{supp}}
\newcommand{\dd}{\,d}
\newcommand{\email}[1]{\href{mailto:#1}{#1}}

\title{Singleton Optimality in Standard Quadratic Programs with the GOE}
\author{Xin Chen\\
H. Milton Stewart School of Industrial and Systems Engineering\\
Georgia Institute of Technology, Atlanta, GA, USA\\
ORCID: \url{https://orcid.org/0000-0002-5168-4823}\\
\email{xin.chen@isye.gatech.edu}}
\date{\today}

\begin{document}
\maketitle
\begin{abstract}
We study the standard quadratic optimization problem over the simplex when the objective matrix is drawn from the Gaussian Orthogonal Ensemble (GOE). Let \(\kappa_n\) denote the support size of the almost surely unique global optimizer. We prove
\[
\Prob(\kappa_n>1)\sim 2\sqrt{2\pi}\,\frac{\sqrt{\log n}}{n}.
\]
The proof combines an exact two-coordinate condition for edge improvement with a product formula obtained by conditioning on the diagonal order statistics. Boundary-layer estimates identify the leading contribution and show that supports of size at least three are negligible. Consequently, the minimum-diagonal vertex is globally optimal with probability tending to one, with an explicit first-order correction.
\end{abstract}

\noindent\textbf{MSC 2020:} 90C20, 60B20, 90C26.\quad
\textbf{Keywords:} standard quadratic program; Gaussian Orthogonal Ensemble; sparse optimizer; extreme-value asymptotics; random quadratic programming.

\section{Introduction}

The standard quadratic optimization problem (StQP) over the probability simplex is
\begin{equation}
\min\{x^\top Qx:x\in\Delta^{n-1}\},\qquad
\Delta^{n-1}:=\{x\in\R_+^n:\1^\top x=1\}.
\label{eq:1.1}
\end{equation}
Here \(\R_+:=[0,\infty)\), \(\R_+^n\) is the nonnegative orthant, and \(\1\) is the all-ones vector. This problem is a basic nonconvex quadratic optimization model with connections to copositive programming, continuous formulations of discrete optimization problems, portfolio selection, and sparse optimization; see Bomze \cite{bomze1998} and Bomze--de Klerk \cite{bomze-deklerk}. Recent work has also developed tight relaxations, instance generation, uncertainty models, and exactness results for doubly nonnegative relaxations of random standard quadratic programs \cite{bomze-peng-qiu-yildirim,bomze-vicente,chen-dnn}.

This paper considers the Gaussian Orthogonal Ensemble (GOE) model in the following normalization. The matrix \(Q\) is symmetric and
\begin{equation}
Q_{ii}=Z_i\sim N(0,1),\qquad
Q_{ij}=Q_{ji}\sim N(0,1/2),\quad i<j,
\label{eq:1.2}
\end{equation}
with all upper-triangular entries independent. Here and throughout, \(Y\sim\mathcal D\) means that the random variable \(Y\) has distribution \(\mathcal D\). This is one conventional unnormalized GOE scaling: the distribution is orthogonally invariant, and every off-diagonal entry has variance \(1/2\). Throughout, \(\Phi\) and \(\phi\) denote the standard normal distribution function and density. For a vector \(x\), write \(\supp(x):=\{i:x_i>0\}\). For each \(n\), let \(x^\star\) be the almost surely unique global optimizer of \eqref{eq:1.1}; the dependence of \(x^\star\) on \(n\) is suppressed. Let \(\kappa_n\) be the cardinality of its support:
\[
\kappa_n:=|\supp(x^\star)|.
\]
We are interested in analyzing the probability \(\Prob(\kappa_n=1)\) that the optimizer is a vertex of the simplex. All asymptotic notation is for \(n\to\infty\). In particular, \(a_n\sim b_n\) means \(a_n/b_n\to1\); this asymptotic use of \(\sim\) is distinct from the distributional notation \(Y\sim\mathcal D\). The symbols \(O(\cdot)\) and \(o(\cdot)\) have their usual meanings.

\subsection{Literature review and contribution}

The sparsity theory for random standard quadratic programs was initiated by Chen, Peng, and Zhang \cite{chen-peng-zhang}, who showed that random instances typically admit sparse optimal solutions. Chen and Peng \cite{chen-peng} sharpened these bounds and proved, in particular, that in the GOE setting \(\Prob(\kappa_n\le2)\to1\). Their result allows support size two as the remaining high-probability possibility, but does not determine how often that case occurs. One consequence of the present paper is that \(\Prob(\kappa_n=2)\) is itself small, of order \(\sqrt{\log n}/n\). Chen and Pittel \cite{chen-pittel} developed further probabilistic refinements and generalizations. These works rely primarily on necessary optimality conditions and union bounds. In particular, a one-row union bound replaces the full KKT system by a necessary inequality involving one active row at a time, and then sums the resulting event over possible supports. Such arguments identify the sparsity phenomenon but do not determine the exact leading constant for \(\Prob(\kappa_n>1)\).

The GOE and related Wigner ensembles are classical objects in random matrix theory; see Mehta \cite{mehta} and Tao \cite{tao}. The analysis here is entrywise rather than spectral. The relevant random quantities are the smallest diagonal order statistics and the off-diagonal entries associated with pairs of coordinates. Standard facts about order statistics and asymptotic analysis are used throughout; see David--Nagaraja \cite{david-nagaraja}, Feller \cite{feller}, and de Bruijn \cite{debruijn}.

We replace the one-row argument by an exact two-point calculation. Let
\[
Z_{(1)}\le Z_{(2)}\le\cdots\le Z_{(n)}
\]
be the ordered diagonal entries of \(Q\), where \([n]:=\{1,\ldots,n\}\). Let \(\pi\) be the almost surely unique random permutation of \([n]\) satisfying \(Q_{\pi(r)\pi(r)}=Z_{(r)}\). For \(i<j\), relabel the off-diagonal variables according to the same ordering by
\[
X_{ij}:=Q_{\pi(i)\pi(j)}.
\]
This convention is used throughout the ordered-coordinate analysis. Conditional on the diagonal variables and their ordering, the variables \(X_{ij}\), \(i<j\), are independent and each has distribution \(N(0,1/2)\).
Here \(e_r\) denotes the \(r\)-th standard basis vector of \(\R^n\). The line segment joining the two vertices \(e_{\pi(i)}\) and \(e_{\pi(j)}\) contains a point with objective value smaller than every vertex value if and only if
\[
X_{ij}<Z_{(1)}-\sqrt{(Z_{(i)}-Z_{(1)})(Z_{(j)}-Z_{(1)})}.
\]
Given the diagonal order statistics, these two-coordinate events are independent over pairs.

For every \(j\ge2\), the event that the segment for the pair \((1,j)\) goes below the best vertex value has the same conditional probability
\[
q_n:=\Phi(\sqrt2 Z_{(1)}).
\]
The probability that there exists at least one pair \((1,j)\), \(j\ge2\), whose segment beats the best vertex value is therefore
\[
S_n:=\E\left[1-(1-q_n)^{n-1}\right].
\]
After conditioning on the diagonal order statistics, different two-coordinate events use different off-diagonal entries. Therefore the probability that none of these segments beats the best vertex is the product of the corresponding conditional no-improvement probabilities. Separating the factors involving the smallest diagonal entry leaves two further contributions. The term \(A_n\) comes from pairs \((2,j)\) with \(j\ge3\), where index 2 refers to the second smallest diagonal entry. The term \(B_n\) comes from pairs \((i,j)\) with \(3\le i<j\le n\). The main result is
\[
\Prob(\kappa_n>1)=S_n+A_n+B_n+o(B_n),
\]
where
\[
A_n\sim3\sqrt{\frac\pi2}\frac{1}{n\sqrt{\log n}},\qquad
B_n\sim9\sqrt{\frac\pi2}\frac{1}{n(\log n)^{3/2}},
\]
and
\[
S_n\sim 2\sqrt{2\pi}\frac{\sqrt{\log n}}{n}.
\]
Hence
\[
\Prob(\kappa_n>1)\sim2\sqrt{2\pi}\frac{\sqrt{\log n}}{n},\qquad
\Prob(\kappa_n=1)=1-\Theta\left(\frac{\sqrt{\log n}}{n}\right).
\]

A simple calculation explains the scale. The minimum diagonal value is near
\(-\sqrt{2\log n}\). For an edge incident to this vertex, the relevant
off-diagonal tail is therefore of size
\(\Phi(\sqrt2 Z_{(1)})\), whose expectation is of order
\(\sqrt{\log n}/n^2\). Multiplying by the \(n-1\) edges incident to the
minimum diagonal vertex gives the scale \(\sqrt{\log n}/n\). The main proof
makes this argument exact, identifies the leading constant, and shows that
all other two-coordinate and higher-support contributions are smaller.

This expansion has a simple algorithmic implication. For independent GOE data, the minimum-diagonal vertex is globally optimal with probability tending to one. Thus such instances are usually solved by inspecting the diagonal and should not be regarded as generic hard benchmarks for nonconvex StQP algorithms.

The proof proceeds by combining the two-coordinate structure of simplex edges with the continuous extremes of the GOE diagonal. Section 2 gives finite-sample Gaussian tail estimates. Section 3 records deterministic optimality facts, and Section 4 derives the exact two-coordinate condition. Section 5 decomposes the conditional no-improvement product into the three terms \(S_n\), \(A_n\), and \(B_n\). Sections 6--8 estimate these terms through boundary-layer integrations, Section 9 bounds supports of size at least three, and Section 10 combines the estimates. Section 11 discusses extensions, and Appendix \ref{app:one-row} records one-row KKT relaxation probabilities indexed by the diagonal order statistic for comparison with earlier bounds.

For reference, the main notation is as follows. The ordered diagonal entries are \(Z_{(r)}\), the corresponding uniform order statistics are \(U_{(r)}=\Phi(Z_{(r)})\), and \(X_{ij}\) denotes the off-diagonal entry after relabeling by the diagonal ordering. The two-coordinate threshold is \(\tau_{ij}\), the associated event is \(F_{ij}\), and its conditional probability is \(p_{ij}\). The quantities \(S_n,A_n,B_n\) are the three terms in the product decomposition. In the integral estimates, \(p(u,v,w)\), \(G(u,v)\), and \(H(u,v)\) denote the pair probability, its one-sample average, and its two-sample average, respectively. The tail and logarithmic scales \(s_n(x)\), \(L_n(x)\), and \(\ell_n(x)\) are defined in Section \ref{sec:toolbox}.

Throughout the paper, the letters \(C\) and \(c\) denote positive finite constants whose values may change from line to line. Subscripts are used only when a constant is fixed within a local argument, for instance when it depends on a parameter such as \(\eta\).

\section{Finite-sample Gaussian tail estimates}\label{sec:toolbox}

This section records Gaussian tail estimates, based on Mills' bounds, used repeatedly in the proofs. We use
\[
\bar\Phi(x):=1-\Phi(x),
\]
and write \(\Gamma(\cdot)\) for the Euler Gamma function.

\begin{lemma}[Mills bounds and tail transfer]\label{lem:tail-transfer}
For every \(x>0\),
\begin{equation}\label{eq:mills}
\frac{x}{1+x^2}\phi(x)\le \bar\Phi(x)\le \frac{\phi(x)}{x}.
\end{equation}
Consequently, for every fixed \(a\ge1\), as \(u\downarrow0\),
\begin{equation}\label{eq:tail-transfer}
\Phi(a\Phi^{-1}(u))
=\frac{(4\pi)^{(a^2-1)/2}}{a}\,
 u^{a^2}\bigl(\log(1/u)\bigr)^{(a^2-1)/2}(1+o(1)).
\end{equation}
Moreover, for every fixed \(a\ge1\), there is a constant \(C_a<\infty\) such that
\begin{equation}\label{eq:tail-transfer-upper}
\Phi(a\Phi^{-1}(u))
\le C_a u^{a^2}\bigl(\log(1/u)\bigr)^{(a^2-1)/2},
\qquad 0<u\le1/2.
\end{equation}
\end{lemma}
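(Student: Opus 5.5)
The plan has three parts. First prove the classical Mills inequalities. Then transfer them through the quantile \(x=-\Phi^{-1}(u)\) to get the asymptotic \eqref{eq:tail-transfer}. Finally promote that asymptotic to the uniform bound \eqref{eq:tail-transfer-upper}.

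For \eqref{eq:mills}, use \(\phi'(t)=-t\phi(t)\). The upper bound follows from
\[
\bar\Phi(x)=\int_x^\infty\phi(t)\dd t\le\int_x^\infty\frac tx\phi(t)\dd t=\frac{\phi(x)}x.
\]
For the lower bound, differentiate \(g(x):=\bar\Phi(x)-\frac{x}{1+x^2}\phi(x)\). A direct computation gives
\[
g'(x)=-\frac{2\phi(x)}{(1+x^2)^2}<0 .
\]
Since \(g(x)\to0\) as \(x\to\infty\), we get \(g>0\) on \((0,\infty)\).

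For \eqref{eq:tail-transfer}, set \(x:=-\Phi^{-1}(u)\to\infty\) as \(u\downarrow0\), so that \(u=\bar\Phi(x)\). By symmetry, \(\Phi(a\Phi^{-1}(u))=\bar\Phi(ax)\). The Mills bounds give \(\bar\Phi(y)=\phi(y)y^{-1}(1+O(y^{-2}))\). Hence
\[
\frac{\bar\Phi(ax)}{u^{a^2}}=\frac{\phi(ax)/(ax)}{(\phi(x)/x)^{a^2}}(1+o(1))
=\frac{(2\pi)^{(a^2-1)/2}}{a}\,x^{a^2-1}(1+o(1)),
\]
using \(\phi(ax)=(2\pi)^{-1/2}e^{-a^2x^2/2}\) and \(\phi(x)^{a^2}=(2\pi)^{-a^2/2}e^{-a^2x^2/2}\), so that the exponentials cancel exactly. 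It remains to express \(x\) in terms of \(\log(1/u)\), and this is the main delicate step. From the Mills bounds,
\[
\log(1/u)=\tfrac{x^2}{2}+\log x+\tfrac12\log(2\pi)+o(1),
\]
so \(x^2=2\log(1/u)(1+o(1))\). This gives \(x^{a^2-1}=2^{(a^2-1)/2}(\log(1/u))^{(a^2-1)/2}(1+o(1))\). Combining, \((2\pi)^{(a^2-1)/2}2^{(a^2-1)/2}=(4\pi)^{(a^2-1)/2}\), which is the stated constant. The case \(a=1\) is trivially consistent.

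For \eqref{eq:tail-transfer-upper}, the asymptotic \eqref{eq:tail-transfer} gives the bound with constant, say, twice the leading coefficient on some interval \(0<u\le u_0(a)\). On the compact range \([u_0,1/2]\), the left side is at most \(1/2\). The right side \(u^{a^2}(\log(1/u))^{(a^2-1)/2}\) is continuous and bounded below by a positive constant there, since \(\log(1/u)\ge\log2>0\). Enlarging \(C_a\) covers this range.

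A cleaner non-asymptotic alternative would apply the Mills bounds directly for \(x\ge x_0\) together with the elementary inequality \(x^2\le 2\log(1/u)\) valid for large \(x\). I would keep the compactness argument since it is shorter.
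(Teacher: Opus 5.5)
Your proposal is correct and follows essentially the same route as the paper: substitute \(s=-\Phi^{-1}(u)\), use the Mills bounds in the form \(\bar\Phi(y)=\phi(y)y^{-1}(1+O(y^{-2}))\) so that the exponentials cancel exactly in \(\Phi(-as)/u^{a^2}\), and then convert \(s^{a^2-1}\) into \((2\log(1/u))^{(a^2-1)/2}\) via \(\log(1/u)=s^2/2+\log s+\tfrac12\log(2\pi)+o(1)\). The only differences are cosmetic. You prove the Mills inequalities directly, and your derivative \(g'(x)=-2\phi(x)/(1+x^2)^2\) is correct, whereas the paper cites them; you obtain the uniform bound from the asymptotic plus compactness on \([u_0,1/2]\), which is exactly the paper's ``increasing \(C_a\)'' step.
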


\begin{proof}
The inequalities in \eqref{eq:mills} are the standard Mills bounds \cite{feller,abramowitz-stegun}. Set \(s=-\Phi^{-1}(u)\). Then \(s\to\infty\) and \(u=\Phi(-s)\). Mills' bounds imply
\begin{equation}\label{eq:mills-first-asymp}
u=\frac{\phi(s)}{s}(1+O(s^{-2})),
\end{equation}
and, with the same error order,
\begin{equation}\label{eq:mills-second-asymp}
\Phi(a\Phi^{-1}(u))=\Phi(-as)=\frac{\phi(as)}{as}(1+O(s^{-2})).
\end{equation}
Using \eqref{eq:mills-first-asymp} to express the exponential factor in \eqref{eq:mills-second-asymp},
\begin{align*}
\frac{\Phi(-as)}{u^{a^2}}
&=\frac{\phi(as)}{as}\left(\frac{s}{\phi(s)}\right)^{a^2}(1+O(s^{-2})) \\
&=\frac{(2\pi)^{(a^2-1)/2}}{a}\,s^{a^2-1}(1+O(s^{-2})).
\end{align*}
Taking logarithms in \eqref{eq:mills-first-asymp} gives
\[
\log(1/u)=\frac{s^2}{2}+\log s+\frac12\log(2\pi)+O(s^{-2}),
\]
so \(s^2\sim2\log(1/u)\). Substitution gives \eqref{eq:tail-transfer}. The upper bound \eqref{eq:tail-transfer-upper} follows from the one-sided estimates in \eqref{eq:mills}; increasing \(C_a\) covers the compact range of \(u\) bounded away from zero.
\end{proof}

\begin{corollary}[GOE off-diagonal tail transfer]\label{cor:goe-tail-transfer}
Define
\[
\Psi(u):=\Phi(\sqrt2\,\Phi^{-1}(u)),\qquad 0<u<1.
\]
Then, as \(u\downarrow0\),
\begin{equation}\label{eq:goe-tail-transfer}
\Psi(u)=\sqrt{2\pi}\,u^2\sqrt{\log(1/u)}\,(1+o(1)).
\end{equation}
Moreover, there is a constant \(C<\infty\) such that
\begin{equation}\label{eq:goe-tail-transfer-upper}
\Psi(u)\le C u^2\sqrt{\log(1/u)},\qquad 0<u\le1/2.
\end{equation}
\end{corollary}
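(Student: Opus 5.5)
This corollary is the special case \(a=\sqrt2\) of Lemma \ref{lem:tail-transfer}, so the plan is to substitute \(a=\sqrt2\) into \eqref{eq:tail-transfer} and \eqref{eq:tail-transfer-upper} and simplify the constants. Since \(a\ge1\) is the only hypothesis on \(a\), the lemma applies directly, and \(\Psi(u)=\Phi(a\Phi^{-1}(u))\) with this choice of \(a\).

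For the asymptotic statement \eqref{eq:goe-tail-transfer}, note that \(a^2=2\), so \((a^2-1)/2=1/2\). The prefactor is then
\[
\frac{(4\pi)^{1/2}}{\sqrt2}=\frac{2\sqrt\pi}{\sqrt2}=\sqrt{2\pi}.
\]
The power of \(u\) is \(u^{2}\), and the power of the logarithm is \(\bigl(\log(1/u)\bigr)^{1/2}\). These give exactly \(\sqrt{2\pi}\,u^2\sqrt{\log(1/u)}\,(1+o(1))\) as \(u\downarrow0\).

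For the uniform upper bound \eqref{eq:goe-tail-transfer-upper}, apply \eqref{eq:tail-transfer-upper} with \(a=\sqrt2\) and set \(C:=C_{\sqrt2}\). The exponents simplify in the same way, giving \(\Psi(u)\le C u^2\sqrt{\log(1/u)}\) on \(0<u\le 1/2\).

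There is no real obstacle beyond the constant arithmetic. The only point worth checking is that the normalization in \eqref{eq:tail-transfer} is \((4\pi)^{(a^2-1)/2}/a\) and not \((2\pi)^{(a^2-1)/2}/a\). This follows from the lemma's proof: the factor \((2\pi)^{(a^2-1)/2}s^{a^2-1}\) combined with \(s^2\sim2\log(1/u)\) contributes \(2^{(a^2-1)/2}(\log(1/u))^{(a^2-1)/2}\), hence \((4\pi)^{(a^2-1)/2}\). As a direct check at \(a=\sqrt2\): the lemma's proof gives \(\Phi(-\sqrt2 s)/u^2\approx(2\pi)^{1/2}s/\sqrt2=\sqrt\pi\,s\). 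With \(s\approx\sqrt{2\log(1/u)}\) this becomes \(\sqrt{2\pi}\sqrt{\log(1/u)}\), which is consistent with \eqref{eq:goe-tail-transfer}.
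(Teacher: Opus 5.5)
Your proposal is correct and is exactly the paper's proof: the corollary is obtained by applying Lemma \ref{lem:tail-transfer} with \(a=\sqrt2\). Your constant check \((4\pi)^{1/2}/\sqrt2=\sqrt{2\pi}\) is right, and so is your consistency check via \(\Phi(-\sqrt2 s)/u^2\approx\sqrt\pi\,s\).
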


\begin{proof}
Apply Lemma \ref{lem:tail-transfer} with \(a=\sqrt2\).
\end{proof}

\begin{remark}[Quantile sandwich]\label{rem:quantile-sandwich}
For fixed \(a\ge1\), call a pair of inequalities of the form
\[
c_a u^{a^2}\bigl(\log(1/u)\bigr)^{(a^2-1)/2}
\le
\Phi(a\Phi^{-1}(u))
\le
C_a u^{a^2}\bigl(\log(1/u)\bigr)^{(a^2-1)/2},
\qquad 0<u\le u_0,
\]
a quantile sandwich. Here \(u_0\in(0,1/2]\) and \(0<c_a<C_a<\infty\) may depend on \(a\). Lemma \ref{lem:tail-transfer} gives such a sandwich for every fixed \(a\ge1\).
\end{remark}

\begin{lemma}[Beta-log integral]\label{lem:beta-log}
Let \(r>0\) and \(\gamma\ge0\) be fixed. Then
\begin{equation}\label{eq:beta-log-asymp}
 n\int_0^1 u^r(1-u)^{n-1}\bigl(\log(1/u)\bigr)^\gamma\,\dd u
=\Gamma(r+1)n^{-r}(\log n)^\gamma(1+o(1)).
\end{equation}
Moreover, for all \(n\ge2\),
\begin{equation}\label{eq:beta-log-upper}
 n\int_0^1 u^r(1-u)^{n-1}\bigl(\log(1/u)\bigr)^\gamma\,\dd u
\le C_{r,\gamma}n^{-r}(\log n)^\gamma,
\end{equation}
where \(C_{r,\gamma}<\infty\) depends only on \(r\) and \(\gamma\).
\end{lemma}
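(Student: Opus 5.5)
The plan is to substitute \(u=t/n\), which turns the integral into a Gamma integral with a slowly varying logarithmic factor. With this change of variables,
\[
n\int_0^1 u^r(1-u)^{n-1}\bigl(\log(1/u)\bigr)^\gamma\,\dd u
= n^{-r}\int_0^n t^r\Bigl(1-\frac tn\Bigr)^{n-1}\bigl(\log n-\log t\bigr)^\gamma\,\dd t .
\]
After dividing by \(n^{-r}(\log n)^\gamma\), the integrand becomes
\[
f_n(t)=t^r\Bigl(1-\frac tn\Bigr)^{n-1}\Bigl(1-\frac{\log t}{\log n}\Bigr)^\gamma .
\]
For fixed \(t>0\), \(f_n(t)\to t^re^{-t}\). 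Since \(\int_0^\infty t^re^{-t}\,\dd t=\Gamma(r+1)\), the asymptotic \eqref{eq:beta-log-asymp} will follow from dominated convergence once a uniform integrable majorant is in hand. That majorant also gives \eqref{eq:beta-log-upper}: the normalized integral is bounded uniformly in \(n\ge2\), the constant for small \(n\) being absorbed into \(C_{r,\gamma}\).

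For the majorant, I use \((1-t/n)^{n-1}\le (1-t/n)^{n/2}\le e^{-t/2}\) for \(n\ge2\) and \(0\le t\le n\). The logarithmic factor needs separate treatment on two ranges.

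On \(1\le t\le n\), we have \(0\le 1-\log t/\log n\le 1\). The factor is therefore at most \(1\), and the majorant is \(t^re^{-t/2}\).

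On \(0<t<1\), we have
\[
1-\frac{\log t}{\log n}=1+\frac{\log(1/t)}{\log n}\le 1+\frac{\log(1/t)}{\log 2}.
\]
This gives the majorant \(C\,t^r\bigl(1+\log(1/t)\bigr)^\gamma\), which is integrable near \(0\) because \(r>0\). Combining the two ranges, \(g(t)=C_{\gamma}\,t^re^{-t/2}\bigl(1+\log_+(1/t)\bigr)^\gamma\) dominates \(f_n\) for all \(n\ge2\) and is integrable on \((0,\infty)\).

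The only point needing care is the logarithmic factor near \(t=0\), where \(\log(1/u)\) exceeds \(\log n\). That region is harmless because the weight \(t^r\) with \(r>0\) makes \(t^r\log^\gamma(1/t)\) integrable. Everything else is routine, so I expect no real obstacle.
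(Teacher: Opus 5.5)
Your proposal is correct and is essentially the paper's proof. Both substitute \(u=t/n\), use pointwise convergence to \(t^re^{-t}\), and apply dominated convergence with a majorant of the form \(Ct^re^{-t/2}\bigl(1+\log_+(1/t)\bigr)^\gamma\); that majorant also gives the uniform upper bound. The only difference is cosmetic: you observe that \((1-t/n)^{n-1}\le e^{-t/2}\) holds on all of \([0,n]\) for \(n\ge2\), so you avoid the paper's separate exponentially small estimate on \(n/2<t<n\).
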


\begin{proof}
Set \(u=t/n\). The left side of \eqref{eq:beta-log-asymp} equals
\[
 n^{-r}\int_0^n t^r\left(1-\frac{t}{n}\right)^{n-1}
 \bigl(\log(n/t)\bigr)^\gamma\,\dd t.
\]
After division by \(n^{-r}(\log n)^\gamma\), the integrand converges pointwise to \(t^re^{-t}\). For \(0<t\le n/2\),
\[
\left(1-\frac{t}{n}\right)^{n-1}\le e^{-t/2},
\]
and \((\log(n/t)/\log n)^\gamma\) is bounded by a constant multiple of \(1+|\log t|^\gamma\). Thus the part \(0<t\le n/2\) is dominated by an integrable function. For \(n/2<t<n\), the integral is exponentially small because \((1-t/n)^{n-1}\le2^{-(n-1)}\) and the remaining factors grow at most polynomially in \(n\). Dominated convergence gives \eqref{eq:beta-log-asymp}; the same estimates give \eqref{eq:beta-log-upper}.
\end{proof}

Throughout the tail estimates, for \(0<x<n\) set
\[
s_n(x):=-\Phi^{-1}(x/n),\qquad
L_n(x):=\log(n/x),\qquad
\ell_n(x):=1+\log_+(n/x),
\]
where \(\log_+(z):=\max\{0,\log z\}\). The scale \(L_n(x)\) is the exact logarithmic scale, while \(\ell_n(x)\) is a positive envelope used in global upper bounds.

\begin{lemma}[Quantile-scale identities]\label{lem:quantile-scale}
With the notation above, uniformly for \(x\) in every compact subinterval of \((0,\infty)\),
\begin{equation}\label{eq:sn-log}
s_n(x)^2\sim 2L_n(x)\sim 2\log n,
\end{equation}
and, uniformly for \(0<x\le\sqrt n\),
\begin{equation}\label{eq:sn-log-refined}
\log(n/x)=\frac{s_n(x)^2}{2}+O(\log s_n(x)).
\end{equation}
Furthermore, uniformly for \(x\) in every compact subinterval of \((0,\infty)\),
\begin{equation}\label{eq:sn-exp-id}
\frac{n^2e^{-s_n(x)^2}}{s_n(x)^2}=2\pi x^2(1+o(1)).
\end{equation}
In addition, there is a universal constant \(C<\infty\) such that, whenever \(0<x\le\sqrt n\),
\begin{equation}\label{eq:sn-exp-upper}
e^{-s_n(x)^2}\le C\frac{x^2s_n(x)^2}{n^2}.
\end{equation}
\end{lemma}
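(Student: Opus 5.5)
The whole lemma rests on a single identity. Set \(u=x/n\) and \(s=s_n(x)=-\Phi^{-1}(u)\), so that \(u=\Phi(-s)=\bar\Phi(s)\). Mills' bounds \eqref{eq:mills} and the computation in the proof of Lemma \ref{lem:tail-transfer} give, whenever \(s\ge1\),
\begin{equation*}
\frac{x}{n}=\frac{e^{-s^2/2}}{\sqrt{2\pi}\,s}\,(1+O(s^{-2})),
\end{equation*}
with two-sided constants that are uniform in \(s\ge1\). The condition \(x\le\sqrt n\) gives \(u\le n^{-1/2}\), so for large \(n\) we have \(s\ge1\). For small \(n\) the range \(x\le\sqrt n\) is a compact set of \(u\)-values bounded away from \(1\), and there \(s\) is bounded below. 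I will handle that residual range by enlarging constants, except in the case where \(s\) is near \(0\) or negative; I come back to this below.

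For \eqref{eq:sn-exp-id} and \eqref{eq:sn-exp-upper}, I will square the displayed relation:
\[
\frac{x^2}{n^2}=\frac{e^{-s^2}}{2\pi s^2}(1+O(s^{-2})),
\]
which rearranges to \(n^2e^{-s^2}/s^2=2\pi x^2(1+O(s^{-2}))\). On a compact \(x\)-interval, \(s\to\infty\) uniformly, and this yields \eqref{eq:sn-exp-id}. Mills' lower bound \(\bar\Phi(s)\ge \frac{s}{1+s^2}\phi(s)\ge \frac{\phi(s)}{2s}\) for \(s\ge1\) gives \(e^{-s^2/2}\le 2\sqrt{2\pi}\,s\,x/n\). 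Squaring gives \eqref{eq:sn-exp-upper} with \(C=8\pi\) whenever \(s\ge1\).

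For \eqref{eq:sn-log-refined}, I will take logarithms of the displayed relation:
\[
\log(n/x)=\frac{s^2}{2}+\log s+\tfrac12\log(2\pi)+O(s^{-2}).
\]
For \(s\ge2\), the terms after \(s^2/2\) are \(O(\log s)\). On a bounded range of \(s\) away from \(1\), both sides are bounded, and the statement holds after adjusting the constant. Then \eqref{eq:sn-log} follows: \(s^2=2L_n(x)+O(\log s)\) and \(L_n(x)=\log n-\log x=\log n+O(1)\) uniformly on compacts. Since \(s\to\infty\), the \(O(\log s)\) term is \(o(s^2)\), and hence \(s^2\sim2L_n\sim2\log n\).

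The main obstacle is only bookkeeping of uniformity at small \(n\) or near \(x=\sqrt n\) when \(n\) is tiny, where \(s\) may be close to \(0\). I will interpret \(O(\log s)\) for \(s\le e\) as \(O(1)\), the natural reading given \(s\ge\Phi^{-1}(1-n^{-1/2})>0\) for \(n\ge2\). For \eqref{eq:sn-exp-upper}, the case of \(s\) bounded below by a positive constant is handled the same way. The boundary case \(n=1\), where \(s=0\) and the right-hand side vanishes, would need to be excluded; it is evidently outside the intended use.
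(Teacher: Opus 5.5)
Your proposal is correct and follows essentially the same route as the paper: two-sided Mills bounds for \(x/n=\bar\Phi(s)\), logarithms for \eqref{eq:sn-log-refined} and \eqref{eq:sn-log}, and squaring, with the lower Mills bound for the one-sided estimate, for \eqref{eq:sn-exp-id} and \eqref{eq:sn-exp-upper}. One small slip in your small-\(n\) bookkeeping: \(\Phi^{-1}(1-n^{-1/2})>0\) only for \(n\ge5\), and for \(n\le4\) the point \(x=n/2\le\sqrt n\) gives \(s=0\), where \eqref{eq:sn-exp-upper} fails for every \(C\), so the excluded range is \(n\le4\) rather than just \(n=1\) (the paper's remark that increasing \(C\) covers the remaining finite set of \(n\) has the same blind spot, which is harmless because the bound is only used for large \(n\)).
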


\begin{proof}
Write \(s=s_n(x)\), so that \(x/n=\Phi(-s)\). Mills' bounds give
\[
\frac{s}{1+s^2}\phi(s)\le \frac{x}{n}\le\frac{\phi(s)}{s}.
\]
Taking logarithms yields
\[
\frac{s^2}{2}=\log n-\log x+O(\log s),
\]
uniformly for \(0<x\le\sqrt n\). This proves \eqref{eq:sn-log-refined}, and \eqref{eq:sn-log} follows by restricting \(x\) to compact subsets of \((0,\infty)\). The sharper relation
\[
\frac{x}{n}=\frac{\phi(s)}{s}(1+o(1))
\]
then gives
\[
e^{-s^2}=2\pi\frac{x^2s^2}{n^2}(1+o(1)),
\]
which is \eqref{eq:sn-exp-id}. Finally, the lower Mills bound gives
\[
\phi(s)\le C\frac{x}{n}s,
\]
for \(0<x\le\sqrt n\) and all sufficiently large \(n\), which implies \eqref{eq:sn-exp-upper}; increasing \(C\) covers the remaining finite set of \(n\).
\end{proof}

The next elementary estimates will be used later to bound outer parts of the order-statistic integrals.

\begin{lemma}[Outer integral estimates]\label{lem:outer-integrals}
Let \(P(x)\) be a polynomial with nonnegative coefficients, and let \(c,c_1,\kappa,\rho>0\) and \(\gamma\ge0\). Then there is \(c_2>0\) such that, as \(n\to\infty\),
\begin{align}
\int_0^{\sqrt n}P(x)\ell_n(x)^\gamma e^{-\kappa s_n(x)^2}e^{-cx}\,\dd x
&=O\left(n^{-2\kappa}(\log n)^{\gamma+\kappa}\right),
\label{eq:outer-int-1}\\
\int_0^{\sqrt n}P(x)\ell_n(x)^\gamma e^{-c_1s_n(x)}e^{-cx}\,\dd x
&=O\left((\log n)^\gamma e^{-c_2\sqrt{\log n}}\right),
\label{eq:outer-int-2}\\
\int_0^{\sqrt n}P(x)\ell_n(x)^\gamma \left(\frac{x}{n}\right)^\rho e^{-cx}\,\dd x
&=O\left(n^{-\rho}(\log n)^\gamma\right).
\label{eq:outer-int-3}
\end{align}
\end{lemma}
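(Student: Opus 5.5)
We prove the three bounds separately, after splitting \([0,\sqrt n]\) into a bulk region \(0<x\le x_n\) and a tail region \(x_n<x\le\sqrt n\). The cut is \(x_n:=K\log n\) for a large constant \(K\). On the tail region the factor \(e^{-cx}\le n^{-cK}\) beats every target rate once \(K\) is large. This uses only crude bounds on the remaining factors. There \(\ell_n(x)\le 1+\log n\), \((x/n)^\rho\le1\), and \(e^{-\kappa s_n^2}\le1\), \(e^{-c_1 s_n}\le1\) whenever \(s_n\ge0\), i.e.\ \(x\le n/2\). Also \(P(x)\le C n^{\deg P/2}\) and the interval length is \(\sqrt n\). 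So the tail part is \(O(n^{-cK+\deg P/2+1/2}(\log n)^\gamma)\), which is negligible against all three right-hand sides, including \(e^{-c_2\sqrt{\log n}}\).

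On the bulk region \(x\le K\log n\le\sqrt n\), so Lemma~\ref{lem:quantile-scale} applies. By \eqref{eq:sn-log-refined},
\[
s_n(x)^2=2\log(n/x)+O(\log s_n(x))\ge 2\log n-2\log(K\log n)-C\log\log n,
\]
so \(s_n(x)^2\sim2\log n\) uniformly there. In particular \(\ell_n(x)\le C\log n\), and \(s_n(x)\ge c\sqrt{\log n}\).

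\emph{For \eqref{eq:outer-int-1},} raise \eqref{eq:sn-exp-upper} to the power \(\kappa\) to get
\[
e^{-\kappa s_n^2}\le C\,x^{2\kappa}s_n^{2\kappa}n^{-2\kappa}\le C\,x^{2\kappa}(\log n)^{\kappa}n^{-2\kappa}.
\]
Here the bound \(s_n^2\le C\log n\) comes from \eqref{eq:sn-log-refined}, since \(x\) ranges over the whole bulk interval and need not be bounded below. Then \(\ell_n(x)^\gamma\le C(\log n)^\gamma\), and \(\int_0^\infty P(x)x^{2\kappa}e^{-cx}\,\dd x<\infty\), which gives \(O(n^{-2\kappa}(\log n)^{\gamma+\kappa})\).

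\emph{For \eqref{eq:outer-int-2},} use \(e^{-c_1s_n}\le e^{-c_1c\sqrt{\log n}}\) on the bulk, the same bound \(\ell_n^\gamma\le C(\log n)^\gamma\), and integrability of \(P e^{-cx}\). Any \(c_2<c_1 c\) works, where \(c\) is the lower constant for \(s_n/\sqrt{\log n}\).

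\emph{For \eqref{eq:outer-int-3},} the argument is immediate: \((x/n)^\rho=n^{-\rho}x^\rho\), and \(\int P(x)x^\rho e^{-cx}\,\dd x<\infty\).

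The main point of care is small \(x\), near \(0\). There \(\ell_n(x)\) and \(s_n(x)\) grow like \(\log(1/x)\) beyond \(\log n\), so the crude bound \(\ell_n\le C\log n\) fails. I would handle this by keeping \(\ell_n(x)^\gamma\le C(\log n)^\gamma(1+|\log x|)^\gamma\), which remains integrable against \(e^{-cx}\) near zero. Similarly \(s_n^2\le C(\log n+|\log x|)\) in the first estimate. These extra factors only contribute \(\int_0^1|\log x|^{\gamma+\kappa}x^{2\kappa}\,\dd x<\infty\), so the rates are unchanged. For the second estimate the lower bound \(s_n\ge c\sqrt{\log n}\) only improves as \(x\downarrow0\), so no issue arises there. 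Finally, the uniformity in \eqref{eq:sn-exp-upper} on \(0<x\le\sqrt n\) is exactly what makes the first estimate hold down to \(x=0\).
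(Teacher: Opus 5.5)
Your proof is correct and follows the paper's argument: raise \eqref{eq:sn-exp-upper} to the power $\kappa$, bound $s_n(x)^2$ and $\ell_n(x)$ by $C(1+\log n+|\log x|)$, integrate against $e^{-cx}$, and for \eqref{eq:outer-int-2} use $s_n(x)\ge c\sqrt{\log n}$ on $(0,\sqrt n]$. The cut at $K\log n$ is unnecessary, because these bounds already hold uniformly on all of $(0,\sqrt n]$; also, your intermediate claims that $s_n(x)^2\sim 2\log n$ uniformly and $\ell_n(x)\le C\log n$ on the bulk fail as $x\downarrow 0$, so it is the logarithmic envelope in your last paragraph that actually carries the estimates.
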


\begin{proof}
The estimates are crude tail bounds used later when one order-statistic scale is separated from another. For \eqref{eq:outer-int-1}, use \eqref{eq:sn-exp-upper}. Since
\[
e^{-s_n(x)^2}\le C\frac{x^2s_n(x)^2}{n^2}
\]
uniformly on \(0<x\le\sqrt n\), raising both sides to the power \(\kappa\) gives
\[
e^{-\kappa s_n(x)^2}
\le C_\kappa\left(\frac{x}{n}\right)^{2\kappa}s_n(x)^{2\kappa}
\le C_\kappa\left(\frac{x}{n}\right)^{2\kappa}\ell_n(x)^\kappa,
\]
because \(s_n(x)^2\le C\ell_n(x)\) on \(0<x\le\sqrt n\), by Lemma \ref{lem:quantile-scale}. Hence the left-hand side of \eqref{eq:outer-int-1} is bounded by
\[
C n^{-2\kappa}\int_0^{\sqrt n}P(x)x^{2\kappa}\ell_n(x)^{\gamma+\kappa}e^{-cx}\,\dd x.
\]
Since \(\ell_n(x)\le 1+\log n+|\log x|\), the last integral is
\(O((\log n)^{\gamma+\kappa})\). This proves \eqref{eq:outer-int-1}. The same argument, without the quantile estimate, gives \eqref{eq:outer-int-3}.
For \eqref{eq:outer-int-2}, the bound \(s_n(x)\ge c'\sqrt{\log n}\) on \(0<x\le\sqrt n\) gives the factor \(e^{-c_2\sqrt{\log n}}\); the remaining integral has only polynomial-logarithmic growth and is absorbed into \((\log n)^\gamma\) after decreasing \(c_2\) if necessary.
\end{proof}

To integrate the conditional pair probabilities later in the proof, we need to translate small multiplicative changes in the uniform order-statistic scale into gaps between extreme normal quantiles. The following lemma records the three regimes used in Sections \ref{sec:A} and \ref{sec:B}. The name ``boundary layer'' refers to the first regime: in the applications below, \(s^2\) is of order \(\log n\), so the relative gap \(y/x-1=\beta/s^2\) is of order \(1/\log n\).

\begin{lemma}[Lower-tail quantile-gap estimates]\label{lem:quantile-gap}
Let
\[
0<x\le y<n,
\qquad u=\frac{x}{n},
\qquad v=\frac{y}{n},
\]
and define
\[
s:=-\Phi^{-1}(u),\qquad t:=-\Phi^{-1}(v),\qquad h:=s-t.
\]
\begin{enumerate}[label=(\roman*)]
\item \emph{Boundary scaling.} If
\[
y=x\left(1+\frac{\beta}{s^2}\right),
\]
where \(x\) ranges over a fixed compact subset of \((0,\infty)\) and \(\beta\) ranges over a fixed compact subset of \([0,\infty)\), then, uniformly as \(n\to\infty\),
\begin{equation}\label{eq:gap-boundary}
s^3h\to\beta.
\end{equation}

\item \emph{Uniform near-region bound.} Fix \(\eta\in(0,1]\). There are constants \(c_\eta,C_\eta>0\) such that the following holds for all sufficiently large \(n\). If
\[
0<x\le\sqrt n,
\qquad
0\le\beta\le\eta s^2,
\qquad
y=x\left(1+\frac{\beta}{s^2}\right),
\]
then
\begin{equation}\label{eq:gap-near-bound}
c_\eta\frac{\beta}{s^3}\le h\le C_\eta\frac{\beta}{s^3}.
\end{equation}

\item \emph{Far-ratio bound.} Fix \(\eta\in(0,1]\). There is a constant \(c_0>1/2\), for instance \(c_0=3/5\), such that the following holds for all sufficiently large \(n\), with the lower bound on \(n\) allowed to depend on \(\eta\). If
\[
0<x\le\sqrt n,
\qquad (1+\eta)x\le y<n,
\]
then
\begin{equation}\label{eq:gap-far-bound}
sh\ge c_0\log(y/x).
\end{equation}
\end{enumerate}
\end{lemma}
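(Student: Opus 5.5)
The common tool for all three parts is the mean value theorem applied to \(\Phi^{-1}\). Since \(u=\Phi(-s)\) and \(v=\Phi(-t)\) with \(t\le s\),
\[
v-u=\int_t^s\phi(r)\,\dd r ,
\]
and \(\phi\) is decreasing on \([0,\infty)\). So \(v-u\) lies between \(h\phi(s)\) and \(h\phi(t)\). Equivalently, \(h=(v-u)/\phi(\xi)\) for some \(\xi\in[t,s]\).

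Here \(v-u=u\beta/s^2\). Mills' bounds \eqref{eq:mills} give \(u=\frac{\phi(s)}{s}(1+O(s^{-2}))\). Hence
\[
h\,\frac{\phi(\xi)}{\phi(s)}=\frac{\beta}{s^3}\bigl(1+O(s^{-2})\bigr).
\]
The proof therefore reduces to controlling the ratio \(\phi(\xi)/\phi(s)=\exp\bigl((s^2-\xi^2)/2\bigr)\), which lies between \(1\) and \(e^{sh}\) because \(s^2-\xi^2\le s^2-t^2\le 2sh\).

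For (i), I would bootstrap. First, \(v\le u(1+\beta_{\max})\) and \(u\to0\) uniformly on the compact \(x\)-set, so \(s\to\infty\) uniformly. A crude bound then gives \(h=O(1/s)\): the lower estimate \(v-u\ge h\phi(s)\) yields \(h\le u\beta/(s^2\phi(s))\le \beta/s^3\). Hence \(sh=O(s^{-2})\to0\), the ratio \(\phi(\xi)/\phi(s)\) tends to \(1\), and \(s^3h\to\beta\) uniformly.

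For (ii), the same upper bound \(h\le \frac{u\beta}{s^2\phi(s)}\le \frac{\beta}{s^3}\) holds by the Mills upper bound; this gives \(C_\eta\). For the lower bound, \(h\ge (v-u)/\phi(t)\), so I need \(\phi(t)/\phi(s)=e^{(s^2-t^2)/2}\) bounded by a constant depending on \(\eta\). From the upper bound, \(sh\le\beta/s^2\le\eta\le1\), so \(s^2-t^2\le 2sh\le 2\). The condition \(x\le\sqrt n\) ensures \(s\ge c\sqrt{\log n}\) is large, so Mills' relative error \(O(s^{-2})\) is uniform. This gives \(c_\eta\).

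Part (iii) is the main obstacle, since the ratio \(y/x\) is unbounded and \(t\) may be \(O(1)\) or even negative when \(y\) is close to \(n\). I would use the Gaussian tail inequality
\[
\frac{\bar\Phi(t)}{\bar\Phi(s)}\le \frac{\phi(t)}{\phi(s)}\cdot\frac{1+s^2}{s^2}\cdot\frac{s}{t},
\]
valid for \(t>0\). When \(t\ge1\), this gives \(\log(y/x)\le (s^2-t^2)/2+\log(s/t)+O(1/s^2)\). Writing \(s^2-t^2=h(s+t)\le 2sh\), it remains to absorb \(\log(s/t)\). Since \(\log(s/t)=-\log(1-h/s)\), this term is small when \(h\ll s\). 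When \(h\) is comparable to \(s\), the quantity \(sh\) is of order \(s^2\), while \(\log(y/x)\le\log(n/x)\approx s^2/2\) by \eqref{eq:sn-log-refined}; this is exactly where the constant \(c_0\) must exceed only \(1/2\) by a little.

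In the regime \(y/x\le\) a fixed large constant, I would argue directly from monotonicity together with part (ii). With \(\eta\) fixed, \(sh\) is bounded below by \(c\eta\) and hence by \(c'\log(y/x)\). This shows a constant works but not \(3/5\). So I would split into three cases:
\begin{enumerate}[label=(\alph*)]
\item \(y/x\le K\): use the mean-value identity with \(\xi\) close to \(s\); one gets \(sh\ge(1-o(1))\log(y/x)\) when \(h=O(1/s)\).
\item \(K<y/x\), \(t\ge s/4\): use \(s^2-t^2\le2sh\) and \(\log(s/t)\le\log4\), which is absorbed since \(\log(y/x)\) is large.
\item \(t<s/4\): \(h\ge 3s/4\), so \(sh\ge\tfrac34 s^2\ge\tfrac35\log(n/x)\ge \tfrac35\log(y/x)\), using \(s^2\sim2\log(n/x)\).
\end{enumerate}
Tuning \(K\) and the split point so that every case delivers \(3/5\) is the delicate bookkeeping step.
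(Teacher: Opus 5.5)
Your proposal is correct. Part (i) follows the paper's argument: the bracket $\phi(s)h\le v-u\le\phi(t)h$, the crude bound $h\le\beta/s^3$, and then $\phi(\xi)/\phi(s)\to1$. In part (ii) your route is slightly shorter. The paper first excludes $t<s/2$ by comparing the lower Mills bound for $v$ with the upper Mills bound for $u$, and only then extracts $s^2-t^2\le C_\eta$. You instead recycle the upper bound $h\le\beta/s^3$ to get $s^2-t^2\le 2sh\le 2\beta/s^2\le 2\eta$ at once, which also gives explicit constants.

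In part (iii) the paper uses only two cases, split at $t=s/2$. For $t\ge s/2$ it absorbs $\log(s/t)\le 2h/s\le sh/6$ into $sh$ itself, which needs only $s^2\ge12$. It absorbs the Mills $o(1)$ into $\frac16\log(y/x)$ using $\log(y/x)\ge\log(1+\eta)$, and this gives $sh\ge\frac57\log(y/x)$. For $t<s/2$ it argues exactly as in your case (c).

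Your version absorbs $\log(s/t)\le\log4$ into a large $\log(y/x)$ instead, which forces the extra bounded-ratio case (a). The bookkeeping is not delicate: any $K$ with $\frac25\log K>\log4$, e.g.\ $K=e^4$, handles (b). In case (a), however, the mean-value identity alone only gives $y/x-1\le(1+s^{-2})\,sh\,e^{sh}$, not $sh\ge(1-o(1))\log(y/x)$. The clean argument runs as follows.
\begin{itemize}
\item Since $v\le K/\sqrt n$, we have $t>0$.
\item Your crude bound then gives $h\le(v-u)/\phi(s)\le(y/x-1)/s\le K/s$.
\item Hence $\log(s/t)\le 2h/s=O(s^{-2})$.
\item Your displayed tail-ratio inequality now yields $sh\ge\log(y/x)-O(s^{-2})$.
\end{itemize}

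For the record, the whole case analysis can be bypassed. Since $\lambda:=\phi/\bar\Phi$ is increasing, the lower Mills bound gives $\log(y/x)=\int_t^s\lambda(r)\dd r\le h\lambda(s)\le h(s+1/s)$. Hence $sh\ge(1+s^{-2})^{-1}\log(y/x)$ for every $t<s$.
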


Part \textup{(i)} gives the pointwise constant in the boundary-layer limits. Part \textup{(ii)} is the uniform near-region estimate used for domination after the change of variables \(y=x(1+\beta/s^2)\). Part \textup{(iii)} controls the far-ratio region \(y/x\ge1+\eta\), where the quantile gap is large enough to produce additional exponential decay. The explicit lower bound \(c_0>1/2\) is needed later to make the far-ratio \(G_-\) decay stronger than the threshold \(1/\sqrt2\) used in the endpoint estimates.

\begin{proof}
We use
\begin{equation}\label{eq:gap-basic-identity}
v-u=\Phi(-t)-\Phi(-s)=\int_t^s\phi(r)\,\dd r.
\end{equation}
When \(0<t<s\), this implies
\begin{equation}\label{eq:gap-density-bracket}
\phi(s)h\le v-u\le \phi(t)h.
\end{equation}

For part \textup{(i)}, \(v-u=u\beta/s^2\). Mills' bounds give
\begin{equation}\label{eq:u-over-phis}
\frac{u}{\phi(s)}=\frac1s(1+o(1)).
\end{equation}
The lower bound in \eqref{eq:gap-density-bracket} gives
\[
h\le \frac{u\beta}{s^2\phi(s)}=O(s^{-3}),
\]
uniformly for bounded \(\beta\). Hence \(t/s\to1\), and for every \(r\in[t,s]\),
\[
\frac{\phi(r)}{\phi(s)}
=\exp\left(\frac{s^2-r^2}{2}\right)=1+o(1),
\]
because \(s(s-r)\le sh=O(s^{-2})\). By the mean-value theorem applied to \eqref{eq:gap-basic-identity},
\[
v-u=\phi(\xi)h
\]
for some \(\xi\in[t,s]\). Combining this identity with \eqref{eq:u-over-phis} gives
\[
h=\frac{u\beta/s^2}{\phi(\xi)}
=\frac{\beta+o(1)}{s^3},
\]
which proves \eqref{eq:gap-boundary}.

For part \textup{(ii)}, we have
\[
\frac vu=1+\frac{\beta}{s^2}\le1+\eta,
\qquad
v-u=\frac{u\beta}{s^2}.
\]
We first compare \(\phi(t)\) and \(\phi(s)\). From the lower Mills bound for \(v=\Phi(-t)\) and the upper Mills bound for \(u=\Phi(-s)\),
\begin{equation}\label{eq:near-ratio-mills}
\frac vu
\ge
\frac{t}{1+t^2}\,s\,
\exp\left(\frac{s^2-t^2}{2}\right).
\end{equation}
Since \(v/u\le1+\eta\), the case \(t<s/2\) is impossible for all sufficiently large \(n\), because the exponential term in \eqref{eq:near-ratio-mills} would diverge. Thus \(t\ge s/2\). The bound \eqref{eq:near-ratio-mills} then implies \(s^2-t^2\le C_\eta\), and consequently
\[
\phi(t)\le C_\eta\phi(s).
\]
Using \eqref{eq:gap-density-bracket} and \(v-u=u\beta/s^2\),
\[
\frac{u\beta}{s^2\phi(t)}\le h\le \frac{u\beta}{s^2\phi(s)}.
\]
Mills' bounds also imply constants \(c,C>0\), independent of \(x\), such that
\[
\frac{c}{s}\le \frac{u}{\phi(s)}\le \frac{C}{s}.
\]
Together with \(\phi(t)\le C_\eta\phi(s)\), this proves \eqref{eq:gap-near-bound}.

For part \textup{(iii)}, put \(a=\log(y/x)=\log(v/u)\). We prove the claim with \(c_0=3/5\). Since \(y\ge(1+\eta)x\), we have \(a\ge a_\eta:=\log(1+\eta)>0\).

First suppose that \(t\ge s/2\). Then \(t\to\infty\) uniformly in the stated range, and Mills' bounds give
\begin{equation}\label{eq:far-ratio-log-bound}
a=\frac{s^2-t^2}{2}+\log\frac{s}{t}+o(1),
\end{equation}
uniformly. Since \(h=s-t\),
\[
\frac{s^2-t^2}{2}=sh-\frac{h^2}{2}\le sh.
\]
Also \(0\le h/s\le1/2\), and therefore
\[
\log\frac{s}{t}=\log\frac1{1-h/s}\le\frac{2h}{s}\le \frac16 sh
\]
for all sufficiently large \(n\), because \(0<x\le\sqrt n\) implies \(s\ge c\sqrt{\log n}\to\infty\). Finally, because \(a\ge a_\eta\), the \(o(1)\) term in \eqref{eq:far-ratio-log-bound} is at most \(a/6\) for all sufficiently large \(n\). Hence
\[
\frac56a\le \frac76 sh,
\]
so \(sh\ge(5/7)a\). In particular \(sh\ge(3/5)a\).

Next suppose that \(t<s/2\). This also covers the case where \(t\) is small or negative, which occurs when \(y/n\) is not in the extreme lower tail. Then \(h=s-t>s/2\), so
\[
sh>\frac{s^2}{2}.
\]
On the other hand, \(y<n\) gives
\[
a=\log(y/x)\le \log(n/x).
\]
By \eqref{eq:sn-log-refined}, uniformly over \(0<x\le\sqrt n\),
\[
\log(n/x)=\frac{s^2}{2}+O(\log s)\le \frac56s^2
\]
for all sufficiently large \(n\). Consequently \(a\le (5/3)sh\), and hence \(sh\ge(3/5)a\) in this case as well. This proves \eqref{eq:gap-far-bound}.
\end{proof}

\begin{theorem}[Normal variable versus the minimum]\label{thm:normal-minimum}
Let \(M_n\) denote the minimum of \(n\) independent \(N(0,1)\) random variables, and let \(Y_a\sim N(0,a^{-2})\) be independent of \(M_n\), where \(a\ge1\) is fixed. Then
\begin{equation}\label{eq:normal-minimum-asymp}
\Prob(Y_a\le M_n)
\sim \frac{(4\pi)^{(a^2-1)/2}}{a}\,\Gamma(a^2+1)
 n^{-a^2}(\log n)^{(a^2-1)/2}.
\end{equation}
In addition, there is a constant \(C_a<\infty\) such that
\begin{equation}\label{eq:normal-minimum-upper}
\Prob(Y_a\le M_n)
\le C_a n^{-a^2}(\log n)^{(a^2-1)/2},
\qquad n\ge2.
\end{equation}
\end{theorem}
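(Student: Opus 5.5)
Conditioning on \(M_n\) and then changing to the uniform scale reduces the statement to Lemma \ref{lem:tail-transfer} and Lemma \ref{lem:beta-log}. Since \(aY_a\sim N(0,1)\), we have \(\Prob(Y_a\le m)=\Phi(am)\) for every real \(m\). Write \(M_n=\Phi^{-1}(U_{(1)})\), where \(U_{(1)}\) is the minimum of \(n\) independent uniforms on \((0,1)\), with density \(n(1-u)^{n-1}\). This gives the exact representation
\[
\Prob(Y_a\le M_n)=n\int_0^1\Phi\bigl(a\Phi^{-1}(u)\bigr)(1-u)^{n-1}\,\dd u .
\]
The integrand is \(\Phi(a\Phi^{-1}(u))\), exactly the quantity controlled by Lemma \ref{lem:tail-transfer}.

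Split the integral at \(u=1/2\). On \([1/2,1)\) the integrand is bounded by \(1\) and \((1-u)^{n-1}\le 2^{-(n-1)}\), so this piece is \(O(n2^{-n})\). That is negligible against \(n^{-a^2}(\log n)^{(a^2-1)/2}\).

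On \((0,1/2]\), the uniform bound \eqref{eq:tail-transfer-upper} dominates the integrand by \(C_a u^{a^2}(\log(1/u))^{(a^2-1)/2}\). Then \eqref{eq:beta-log-upper}, with \(r=a^2\) and \(\gamma=(a^2-1)/2\), gives the upper bound \eqref{eq:normal-minimum-upper} immediately. This holds for all \(n\ge 2\) after enlarging \(C_a\) to absorb the exponentially small outer piece.

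For the asymptotic \eqref{eq:normal-minimum-asymp}, fix \(\varepsilon>0\) and choose \(\delta\in(0,1/2]\) so that, by \eqref{eq:tail-transfer}, the integrand on \((0,\delta]\) lies between \((1\mp\varepsilon)\frac{(4\pi)^{(a^2-1)/2}}{a}u^{a^2}(\log(1/u))^{(a^2-1)/2}\). Extending these comparison integrals from \((0,\delta]\) to \((0,1)\) changes them only by \(O(n(1-\delta)^{n-1})\), which is exponentially small; the same holds for the original integral over \([\delta,1)\). Applying \eqref{eq:beta-log-asymp} with the same \(r,\gamma\) yields
\[
\Gamma(a^2+1)\,n^{-a^2}(\log n)^{(a^2-1)/2}(1+o(1)).
\]
Letting \(\varepsilon\downarrow0\) gives the claimed constant \(\frac{(4\pi)^{(a^2-1)/2}}{a}\Gamma(a^2+1)\).

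The only delicate point is this sandwich step. The \(o(1)\) in \eqref{eq:tail-transfer} is only pointwise as \(u\downarrow0\), so it has to be converted into uniform \(1\pm\varepsilon\) bounds on a small interval \((0,\delta]\). One must then check that the remainder on \([\delta,1)\) and the tail error in the Beta-log integral are both \(o\bigl(n^{-a^2}(\log n)^{(a^2-1)/2}\bigr)\). Both are exponentially small in \(n\), so this is routine.
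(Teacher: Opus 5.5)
Your proposal is correct and follows the paper's proof step for step. It uses the same uniform-scale representation \(n\int_0^1\Phi(a\Phi^{-1}(u))(1-u)^{n-1}\,\dd u\), the same split at \(u=1/2\), and the same appeal to Lemma \ref{lem:tail-transfer} and Lemma \ref{lem:beta-log} with \(r=a^2\), \(\gamma=(a^2-1)/2\). Your \(\varepsilon\)--\(\delta\) sandwich just spells out what the paper compresses into ``the upper bound \eqref{eq:tail-transfer-upper} supplies domination''. Note that no pointwise-versus-uniform issue actually arises: a one-variable limit as \(u\downarrow0\) already gives uniform \(1\pm\varepsilon\) bounds on some interval \((0,\delta]\).
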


\begin{proof}
Since \(U:=\Phi(M_n)\) has density \(n(1-u)^{n-1}\) on \((0,1)\),
\[
\Prob(Y_a\le M_n)=n\int_0^1\Phi(a\Phi^{-1}(u))(1-u)^{n-1}\,\dd u.
\]
The integral over \([1/2,1]\) is exponentially small. On \((0,1/2]\), Lemma \ref{lem:tail-transfer} gives
\[
\Phi(a\Phi^{-1}(u))
=\frac{(4\pi)^{(a^2-1)/2}}{a}\,
 u^{a^2}\bigl(\log(1/u)\bigr)^{(a^2-1)/2}(1+o(1))
\]
near zero, and the upper bound \eqref{eq:tail-transfer-upper} supplies domination. Lemma \ref{lem:beta-log}, with \(r=a^2\) and \(\gamma=(a^2-1)/2\), gives \eqref{eq:normal-minimum-asymp}. The same two lemmas give \eqref{eq:normal-minimum-upper}.
\end{proof}

\section{Deterministic optimality}\label{sec:deterministic}

For a finite set \(K\), \(|K|\) denotes its cardinality, and \(\1_K\) denotes the all-ones vector of dimension \(|K|\). For \(r\in[n]\), \(e_r\) denotes the \(r\)-th standard basis vector of \(\R^n\). The ordered diagonal variables \(Z_{(r)}\), the ordering permutation \(\pi\), and the ordered off-diagonal variables \(X_{ij}=Q_{\pi(i)\pi(j)}\) are as defined in Section 1.

\subsection{Almost-sure uniqueness}

The next lemma records a standard genericity property of continuous quadratic programs over a fixed polytope.

\begin{lemma}[Almost-sure uniqueness]
Under the GOE law, problem \eqref{eq:1.1} has a unique global minimizer almost surely.
\end{lemma}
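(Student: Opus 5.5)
We argue by reduction to finitely many face problems and a countable union of measure-zero algebraic events. Since \(\Delta^{n-1}\) is compact and the objective is continuous, a global minimizer exists. Suppose two distinct global minimizers \(x\ne y\) exist, and let \(I=\supp(x)\), \(J=\supp(y)\). There are only finitely many pairs \((I,J)\) of nonempty subsets of \([n]\), so it suffices to show that, for each fixed pair, the event that there are distinct global minimizers with these supports has probability zero. The law of \(Q\) is absolutely continuous with respect to Lebesgue measure on the \(n(n+1)/2\)-dimensional space of symmetric matrices, since its independent upper-triangular entries have nondegenerate Gaussian densities. It is therefore enough to show that each such event is contained in the zero set of a nonzero polynomial in the entries of \(Q\), or in a finite union of such zero sets, because proper algebraic sets are Lebesgue-null.

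For the key step, use the KKT conditions on a fixed support. A global minimizer \(x\) with support \(K\) satisfies \(Q_{KK}x_K=\lambda\1_K\), \(\1_K^\top x_K=1\), and its optimal value equals \(x^\top Qx=\lambda\). We first discard the null event that some principal submatrix \(Q_{KK}\) is singular, since \(\det Q_{KK}\) is a nonzero polynomial. Off that event, \(x_K=Q_{KK}^{-1}\1_K/(\1_K^\top Q_{KK}^{-1}\1_K)\), provided the denominator is nonzero; the event \(\1_K^\top Q_{KK}^{-1}\1_K=0\) is, after clearing determinants, the zero set of the polynomial \(\1_K^\top\operatorname{adj}(Q_{KK})\1_K\), which is nonzero (take \(Q_{KK}=I\)). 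Hence \(x\) is uniquely determined by its support, and its value is \(v_K(Q)=1/(\1_K^\top Q_{KK}^{-1}\1_K)\).

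It follows that two distinct minimizers must have different supports \(I\ne J\), with \(v_I(Q)=v_J(Q)\). Clearing denominators, this is the polynomial equation
\[
\det Q_{II}\cdot \1_J^\top\operatorname{adj}(Q_{JJ})\1_J-\det Q_{JJ}\cdot \1_I^\top\operatorname{adj}(Q_{II})\1_I=0.
\]

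The main obstacle is verifying that this polynomial is not identically zero for each \(I\ne J\). To do so, take \(Q\) diagonal with entries \(d_k>0\). Then \(v_K=(\sum_{k\in K}d_k^{-1})^{-1}\), and choosing generic \(d\) (for instance \(d_k=2^{k}\)) makes \(\sum_{k\in I}d_k^{-1}\ne\sum_{k\in J}d_k^{-1}\) by uniqueness of binary expansions. This gives a point where the polynomial is nonzero. A union bound over the finitely many pairs \((I,J)\) and supports \(K\) then completes the argument. An alternative if one prefers avoiding adjugates is a Fubini argument: fix all entries except \(Q_{kk}\) for some \(k\in I\setminus J\) (swap roles if \(I\subset J\)), and note that \(v_J\) is constant while \(v_I\) is a nonconstant rational function of \(Q_{kk}\), so equality holds for at most finitely many values.
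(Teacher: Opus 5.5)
Your proposal is correct and follows essentially the same route as the paper. You use KKT on each fixed support to get a unique candidate with value $1/(\1_K^\top Q_{KK}^{-1}\1_K)$ off a null algebraic set. You turn equality of values for $I\ne J$ into a polynomial identity, and you show that polynomial is nonzero at a diagonal matrix whose reciprocal subset sums are distinct by binary expansion.

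In your optional Fubini variant, nonconstancy of $v_I$ in $Q_{kk}$ is not automatic for arbitrary fixed remaining entries. It does hold because $\partial_{Q_{kk}}(\1^\top Q_{II}^{-1}\1)=-(e_k^\top Q_{II}^{-1}\1)^2$, and this is nonzero since $e_k^\top Q_{II}^{-1}\1=x_k/\lambda$ with $x_k>0$.
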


\begin{proof}
For a nonempty support \(K\subseteq[n]\), let \(Q_K\) be the corresponding principal submatrix and let \(x_K\) denote the restriction of \(x\) to the coordinates in \(K\). If a minimizer has exact support \(K\), then the KKT conditions give
\begin{equation}
Q_Kx_K=\lambda \1_K,
\qquad \1_K^\top x_K=1,
\qquad x_K>0,
\label{eq:3.3}
\end{equation}
for some scalar \(\lambda\), together with the outside complementary inequalities
\begin{equation}
Q_{rK}x_K\ge \lambda,
\qquad r\notin K.
\label{eq:3.4}
\end{equation}
Outside an algebraic exceptional set on which \(Q_K\) is singular or \(\1_K^\top Q_K^{-1}\1_K=0\), the equalities in \eqref{eq:3.3} have at most one solution, namely
\begin{equation}
x_K=\frac{Q_K^{-1}\1_K}{\1_K^\top Q_K^{-1}\1_K},\qquad
\lambda_K=\frac1{\1_K^\top Q_K^{-1}\1_K}.
\end{equation}
Thus each fixed support produces at most one candidate optimizer, outside finitely many algebraic hypersurfaces.

If two distinct supports \(K\ne L\) both produced global minimizers, then their candidate values would satisfy \(\lambda_K=\lambda_L\), because each \(\lambda_K\) is the objective value of the corresponding candidate minimizer. Multiplying this equality by the nonzero denominators in the displayed formula above gives a polynomial equation in the entries of \(Q\). This polynomial is not identically zero. To see this, take a diagonal matrix \(D=\operatorname{diag}(d_1,\ldots,d_n)\) with \(d_i>0\) and with all reciprocal subset sums
\[
\sum_{i\in K}\frac1{d_i},\qquad \varnothing\ne K\subseteq[n],
\]
distinct; for instance choose \(1/d_i=2^i\). For this diagonal matrix,
\[
\lambda_K=\left(\sum_{i\in K}\frac1{d_i}\right)^{-1},
\]
so \(\lambda_K\ne\lambda_L\) whenever \(K\ne L\). Thus the equality polynomial for each fixed support pair is nonzero. Since the GOE law is absolutely continuous and there are only finitely many support pairs, the union of all such exceptional algebraic sets has probability zero. Hence the global minimizer is almost surely unique.
\end{proof}

From now on, write \(x^\star\) for the almost surely unique global optimizer and
\[
\kappa_n:=|\supp(x^\star)|.
\]

\subsection{Deterministic support conditions}

The following row-average necessary condition from Chen and Peng \cite{chen-peng} is the only deterministic fact needed to control solutions with support size at least three. We recall the short proof.

\begin{proposition}[Deterministic row-average necessary condition]\label{prop:det-support}
Suppose \(x^\star\) is the global optimizer of \eqref{eq:1.1}, and let \(K:=\supp(x^\star)\) with \(|K|=k>1\). Then there exists an active row \(i\in K\) such that
\begin{equation}
\frac1k\sum_{j\in K}Q_{ij}<\min_{1\le r\le n}Q_{rr}.
\label{eq:row-avg-cond}
\end{equation}
\end{proposition}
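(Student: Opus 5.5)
We use the KKT conditions at \(x^\star\) together with global optimality. By the KKT system \eqref{eq:3.3}, \(Q_Kx^\star_K=\lambda\1_K\) with \(\lambda=x^{\star\top}Qx^\star\) the optimal value. Global optimality gives \(\lambda\le e_r^\top Qe_r=Q_{rr}\) for every \(r\), hence \(\lambda\le\min_rQ_{rr}\). The row-average condition \eqref{eq:row-avg-cond} then reduces to finding an active row \(i\in K\) with \(\frac1k\sum_{j\in K}Q_{ij}<\lambda\). So we need to compare the uniform average of row \(i\) over \(K\) with its \(x^\star\)-weighted average, which equals \(\lambda\).

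To produce such a row, consider the direction \(d:=\frac1k\1_K-x^\star_K\), supported in \(K\) with \(\1^\top d=0\). Restricted to the face \(\Delta_K\), \(x^\star\) is a local minimizer in the relative interior, and second-order necessary optimality gives \(d^\top Q_Kd\ge0\) for every \(d\) with \(\1_K^\top d=0\). With \(y:=\frac1k\1_K\) and \(x:=x^\star_K\), this reads
\[
y^\top Q_Ky-2y^\top Q_Kx+x^\top Q_Kx\ge0 .
\]
Since \(Q_Kx=\lambda\1_K\), we have \(y^\top Q_Kx=\lambda\) and \(x^\top Q_Kx=\lambda\). Hence \(y^\top Q_Ky\ge\lambda\). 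This is the wrong direction: it bounds the average of the row averages from below. So the direct approach fails and the argument needs a different selection of \(i\).

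Instead, take \(i\in K\) to be an index where \(x^\star_i\) is largest. Then \(x^\star_i\ge1/k\), with equality only if \(x^\star_K\) is uniform. If \(x^\star_K\) is uniform, the row averages are exactly \(\lambda\), and we need a strict inequality \(\lambda<\min_r Q_{rr}\). Strictness of \(\lambda<Q_{rr}\) holds almost surely, because by uniqueness the optimizer with \(k>1\) is not a vertex. Uniform weights occur with probability zero, but for a deterministic statement we need more than this.

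For the general case we use the KKT row identity more carefully:
\[
\lambda=\sum_jQ_{ij}x^\star_j=Q_{ii}x^\star_i+\sum_{j\in K,\,j\ne i}Q_{ij}x^\star_j .
\]
Since \(Q_{ii}\ge\lambda\) and \(x^\star_i>0\), this gives \(\sum_{j\ne i}Q_{ij}x^\star_j\le\lambda(1-x^\star_i)\). We hope to choose \(i\) so that the off-diagonal entries \(Q_{ij}\) are negative enough to convert this weighted bound into the uniform one.

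The main obstacle is this conversion from the \(x^\star\)-weighted identity to the uniform average. Our first attempt is to average the KKT identity with the weights \(1/k\) in a swapped order. Using symmetry, \(\frac1k\sum_{i\in K}\sum_jQ_{ij}x^\star_j=\lambda\) and \(\sum_{i\in K}x^\star_i\bigl(\frac1k\sum_jQ_{ij}\bigr)=\lambda\). Thus the \(x^\star\)-weighted average of the uniform row averages \(R_i:=\frac1k\sum_{j\in K}Q_{ij}\) equals exactly \(\lambda\). Since \(x^\star\) is a probability vector, some \(i\in K\) has \(R_i\le\lambda\le\min_rQ_{rr}\).

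For strictness, suppose every \(R_i\ge\lambda\) and \(\lambda=\min_r Q_{rr}\). Equality in the weighted average then forces \(R_i=\lambda\) for all \(i\in K\). We then intend to exclude this case either by genericity, as in the uniqueness lemma, or by noting that \(\lambda<\min_rQ_{rr}\) whenever the unique optimizer is not a vertex. That second fact directly gives \(R_i\le\lambda<\min_rQ_{rr}\), completing the proof.
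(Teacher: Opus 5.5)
Your final argument is exactly the paper's proof: the $x^\star$-weighted average of the row averages $R_i$ equals $\lambda$, so some active row has $R_i\le\lambda$, and uniqueness gives $\lambda<\min_r Q_{rr}$ because otherwise a vertex $e_r$ with $Q_{rr}=\lambda$ would be a second global minimizer. The earlier detours through the second-order condition and the largest coordinate are unnecessary and can be dropped, though you should state that one-line reason for $\lambda<\min_r Q_{rr}$ explicitly.
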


\begin{proof}
Let \(x_K\) be the restriction of \(x^\star\) to \(K\), and let \(\lambda=(x^\star)^\top Qx^\star\). Since \(|K|>1\), almost-sure uniqueness implies \(\lambda<\min_r Q_{rr}\); otherwise a vertex attaining the same value would also be optimal. The active KKT equations give \(Q_Kx_K=\lambda\1_K\). For \(i\in K\), put \(A_i=k^{-1}\sum_{j\in K}Q_{ij}\). By symmetry,
\[
\sum_{i\in K}x_i A_i
=\frac1k\sum_{j\in K}(Q_Kx_K)_j
=\lambda.
\]
Because \(x_i>0\) and \(\sum_{i\in K}x_i=1\), at least one active row satisfies \(A_i\le\lambda\). Combining this with \(\lambda<\min_r Q_{rr}\) gives \eqref{eq:row-avg-cond}.
\end{proof}

\section{Two-coordinate pair events}\label{sec:two-coordinate}

Recall that \(\pi\) orders the diagonal entries. Fix two diagonal order indices \(1\le i<j\le n\), and write
\[
a:=Z_{(i)},\qquad c:=Z_{(j)},\qquad b:=X_{ij}.
\]
For \(0\le t\le1\), the point \(t e_{\pi(i)}+(1-t)e_{\pi(j)}\) lies on the edge associated with \((i,j)\), and the objective restricted to this edge is
\begin{equation}
f_{ij}(t)=a t^2+2bt(1-t)+c(1-t)^2.
\label{eq:4.1}
\end{equation}

\begin{proposition}[Exact two-point threshold]
Let \(m\le a\le c\). Then
\begin{equation}
\min_{0\le t\le1}f_{ij}(t)<m
\quad\Longleftrightarrow\quad
b<m-\sqrt{(a-m)(c-m)}.
\label{eq:4.2}
\end{equation}
Consequently, taking \(m=Z_{(1)}\), there exists \(t\in[0,1]\) such that the point \(t e_{\pi(i)}+(1-t)e_{\pi(j)}\) has objective value below every vertex value if and only if
\begin{equation}
X_{ij}<\tau_{ij},
\label{eq:4.3}
\end{equation}
where
\begin{equation}
\tau_{ij}:=Z_{(1)}-\sqrt{(Z_{(i)}-Z_{(1)})(Z_{(j)}-Z_{(1)})}.
\label{eq:4.4}
\end{equation}
\end{proposition}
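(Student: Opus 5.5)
The plan is to reduce to a shifted problem in which the threshold \(m\) becomes zero. Put \(\alpha:=a-m\ge0\), \(\gamma:=c-m\ge0\) and \(\beta:=b-m\). Since \(t^2+2t(1-t)+(1-t)^2=1\), we have
\[
f_{ij}(t)-m=\alpha t^2+2\beta t(1-t)+\gamma(1-t)^2=:g(t).
\]
So \eqref{eq:4.2} is equivalent to the claim that \(\min_{[0,1]}g<0\) holds if and only if \(\beta<-\sqrt{\alpha\gamma}\).

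For the direction (\(\Leftarrow\)), assume \(\beta<-\sqrt{\alpha\gamma}\), so in particular \(\beta<0\). If \(\alpha\gamma>0\), evaluate \(g\) at the point with \(t/(1-t)=\sqrt{\gamma/\alpha}\), i.e. \(t=\sqrt\gamma/(\sqrt\alpha+\sqrt\gamma)\). This gives
\[
g(t)=\frac{2\sqrt{\alpha\gamma}\,(\sqrt{\alpha\gamma}+\beta)}{(\sqrt\alpha+\sqrt\gamma)^2}<0.
\]
If instead \(\alpha\gamma=0\), say \(\alpha=0\), then \(g(t)=(1-t)\bigl(2\beta t+\gamma(1-t)\bigr)\). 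For \(t<1\) close to \(1\) the bracket is close to \(2\beta<0\), so \(g(t)<0\). The case \(\gamma=0\) is symmetric.

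For the direction (\(\Rightarrow\)), argue by contraposition and assume \(\beta\ge-\sqrt{\alpha\gamma}\). For \(t\in[0,1]\), AM--GM gives
\[
\alpha t^2+\gamma(1-t)^2\ge2\sqrt{\alpha\gamma}\,t(1-t).
\]
Hence
\[
g(t)\ge 2t(1-t)\bigl(\sqrt{\alpha\gamma}+\beta\bigr)\ge0,
\]
so \(\min g\ge0\), i.e. \(\min f_{ij}\ge m\).

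For the consequence, take \(m=Z_{(1)}\). Then \(m\le a=Z_{(i)}\le c=Z_{(j)}\) because \(i<j\) in the order statistics. The vertex values are exactly the diagonal entries, and their minimum is \(Z_{(1)}\). So "objective below every vertex value" means \(\min_t f_{ij}(t)<Z_{(1)}\), and \eqref{eq:4.3}--\eqref{eq:4.4} follow directly. There is no real obstacle here. The only points needing care are the degenerate cases \(\alpha\gamma=0\), which occur for instance when \(i=1\), and the use of a strict inequality on both sides of the equivalence.
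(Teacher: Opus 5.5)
Your proof is correct, but it is organized differently from the paper's. The paper dehomogenizes: for \(t\in(0,1)\) it puts \(w=t/(1-t)\) and divides by \((1-t)^2\), so the question becomes whether \((a-m)w^2+2(b-m)w+(c-m)\) is negative for some \(w>0\). It treats the endpoints separately and notes that \(b<m\) is then forced. It then decides the sign via the discriminant when \(a>m\), with a separate linear argument when \(a=m\).

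You instead stay with the homogeneous form \(g(t)=\alpha t^2+2\beta t(1-t)+\gamma(1-t)^2\) on the closed interval. For necessity you use the single AM--GM bound \(\alpha t^2+\gamma(1-t)^2\ge2\sqrt{\alpha\gamma}\,t(1-t)\), which needs no case split and covers the endpoints automatically. For sufficiency you give an explicit witness at the AM--GM equality ratio \(t/(1-t)=\sqrt{\gamma/\alpha}\). That point is not the minimizer, but it already gives a negative value, and only the case \(\alpha\gamma=0\) has to be handled separately.

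What your route buys is transparency. It is exactly the classical copositivity criterion for the \(2\times2\) matrix \(\begin{pmatrix}\alpha&\beta\\ \beta&\gamma\end{pmatrix}\) with nonnegative diagonal, proved constructively. The paper's route is the more mechanical one-variable sign check, at the cost of the extra case distinction and the preliminary observation on the sign of \(b-m\).
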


\begin{proof}
For \(t\in(0,1)\), set \(w=t/(1-t)\). Dividing \(f_{ij}(t)-m\) by \((1-t)^2\) gives the equivalent inequality
\[
(a-m)w^2+2(b-m)w+(c-m)<0,
\qquad w>0.
\]
The endpoints cannot give a strict value below \(m\), since \(f_{ij}(0)-m=c-m\ge0\) and \(f_{ij}(1)-m=a-m\ge0\). If the displayed expression is negative for some \(w>0\), then necessarily \(b<m\); otherwise all three coefficients are nonnegative.

First suppose \(a>m\). With \(b<m\), the quadratic has positive leading coefficient. Since its value at \(w=0\) is nonnegative, it is negative for some positive \(w\) if and only if its discriminant is positive:
\[
4(b-m)^2-4(a-m)(c-m)>0.
\]
Because \(b-m<0\), this is equivalent to
\[
b<m-\sqrt{(a-m)(c-m)}.
\]

It remains to handle the degenerate endpoint case \(a=m\). The inequality reduces to
\[
2(b-m)w+(c-m)<0,
\qquad w>0.
\]
This holds for some \(w>0\) if and only if \(b<m\). The right-hand side of \eqref{eq:4.2} also reduces to \(b<m\), because \(\sqrt{(a-m)(c-m)}=0\). This includes the subcase \(a=c=m\). Therefore \eqref{eq:4.2} holds in all cases.
\end{proof}

\begin{remark}[Comparison with the one-row relaxation]\label{rem:scale-vs-constant}
Since \(i<j\), we have \(Z_{(j)}\ge Z_{(i)}\), and hence
\[
\sqrt{(Z_{(i)}-Z_{(1)})(Z_{(j)}-Z_{(1)})}\ge Z_{(i)}-Z_{(1)}.
\]
Therefore
\[
\tau_{ij}
=Z_{(1)}-\sqrt{(Z_{(i)}-Z_{(1)})(Z_{(j)}-Z_{(1)})}
\le 2Z_{(1)}-Z_{(i)}.
\]
The one-row necessary condition used in Chen--Peng-type arguments is based on the linear threshold
\[
X_{ij}+Z_{(i)}\le2Z_{(1)},
\]
or equivalently \(X_{ij}\le 2Z_{(1)}-Z_{(i)}\).  This is a relaxation of the exact two-point threshold \(X_{ij}<\tau_{ij}\): it replaces the curved boundary determined by the product of two diagonal gaps by a linear boundary determined by one gap.  The two thresholds coincide only in the degenerate equal-gap limit \(Z_{(j)}=Z_{(i)}\).

For comparison, Appendix \ref{app:one-row} uses the one-row relaxation to develop probability asymptotics that recover the same order of magnitude as the non-vertex probability \(\Prob(\kappa_n>1)\). It also shows that the fixed-order part of the one-row first-moment bound has leading constant \(8\sqrt{2\pi}\), four times the exact leading constant \(2\sqrt{2\pi}\). The probability asymptotics established there may also be of independent interest.
\end{remark}

Define the two-coordinate event
\begin{equation}
F_{ij}:=\{X_{ij}<\tau_{ij}\},\qquad 1\le i<j\le n.
\label{eq:4.5}
\end{equation}
Thus \(F_{ij}\) is the event that the line segment associated with \((i,j)\) contains a point whose objective value is strictly smaller than the best vertex value, namely \(Z_{(1)}\). Its conditional probability is
\begin{equation}
p_{ij}:=\Prob(F_{ij}\mid Z_{(1)},\ldots,Z_{(n)})=\Phi(\sqrt2\tau_{ij}).
\label{eq:4.6}
\end{equation}
Conditioned on the full diagonal vector, equivalently on the order statistics together with the ordering permutation \(\pi\), the ordered off-diagonal variables \(X_{ij}=Q_{\pi(i)\pi(j)}\) are independent \(N(0,1/2)\). Since the off-diagonal entries are i.i.d. and independent of the diagonal entries, and since the thresholds \(\tau_{ij}\) depend only on the order statistics, the same product formula remains valid after conditioning only on \(Z_{(1)},\ldots,Z_{(n)}\).

\begin{proposition}[Pair-event product formula]
Conditionally on \(Z_{(1)},\ldots,Z_{(n)}\),
\begin{equation}
\Prob\left(\bigcup_{1\le i<j\le n}F_{ij}\,\middle|\,Z_{(1)},\ldots,Z_{(n)}\right)
=1-\prod_{1\le i<j\le n}(1-p_{ij}).
\label{eq:4.7}
\end{equation}
Consequently,
\begin{equation}
\Prob\left(\bigcup_{i<j}F_{ij}\right)
=\E\left[1-\prod_{i<j}(1-p_{ij})\right].
\label{eq:4.8}
\end{equation}
Moreover,
\begin{equation}
0\le
\Prob\left(\bigcup_{i<j}F_{ij}\right)-\Prob(\kappa_n=2)
\le \Prob(\kappa_n\ge3).
\label{eq:4.9}
\end{equation}
\end{proposition}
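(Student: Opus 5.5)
The plan has three parts, matching the three displays. The product formula \eqref{eq:4.7} comes from conditional independence. Identity \eqref{eq:4.8} is its expectation. The sandwich \eqref{eq:4.9} comes from comparing the union of pair events with the events \(\{\kappa_n=2\}\) and \(\{\kappa_n\ge3\}\).

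\textbf{Step 1: the product formula.} Condition first on the full diagonal vector \((Z_1,\ldots,Z_n)\). This fixes \(\pi\) and all thresholds \(\tau_{ij}\). The off-diagonal entries \(Q_{kl}\), \(k<l\), are i.i.d. \(N(0,1/2)\) and independent of the diagonal. Since \(\pi\) is a bijection, the relabeled family \(X_{ij}=Q_{\pi(i)\pi(j)}\) is again i.i.d. \(N(0,1/2)\), conditionally on the diagonal.

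Each event \(F_{ij}=\{X_{ij}<\tau_{ij}\}\) involves a distinct variable \(X_{ij}\) compared with a constant. Hence the complements \(F_{ij}^c\) are conditionally independent, and
\[
\Prob\Bigl(\bigcap_{i<j} F_{ij}^c \,\Big|\, Z_1,\ldots,Z_n\Bigr)=\prod_{i<j}\bigl(1-\Phi(\sqrt2\tau_{ij})\bigr).
\]
This uses \(\Prob(X_{ij}<\tau)=\Phi(\sqrt2\tau)\) for \(X_{ij}\sim N(0,1/2)\).

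The right-hand side depends only on the order statistics. The tower property therefore gives the same identity conditionally on \(Z_{(1)},\ldots,Z_{(n)}\), which is \eqref{eq:4.7}. Taking expectations gives \eqref{eq:4.8}.

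\textbf{Step 2: the lower bound in \eqref{eq:4.9}.} It suffices to show \(\{\kappa_n=2\}\subseteq\bigcup F_{ij}\) almost surely. If \(\kappa_n=2\), then \(x^\star\) lies in the relative interior of some edge \(\{e_{\pi(i)},e_{\pi(j)}\}\). By almost-sure uniqueness, its value is strictly below every vertex value, in particular below \(Z_{(1)}\). Otherwise a vertex would attain the optimum as well, since \(x^\star\) is the global minimizer. By the exact two-point threshold \eqref{eq:4.3}, this is precisely the event \(F_{ij}\).

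\textbf{Step 3: the upper bound in \eqref{eq:4.9}.} It suffices to show \(\bigcup F_{ij}\subseteq\{\kappa_n\ge2\}\). If some \(F_{ij}\) occurs, some feasible point has value below \(\min_r Q_{rr}\). Then no vertex can be the global optimizer, so \(\kappa_n\ge2\).

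Therefore
\[
\Prob\Bigl(\bigcup F_{ij}\Bigr)-\Prob(\kappa_n=2)=\Prob\Bigl(\bigcup F_{ij}\cap\{\kappa_n\ge3\}\Bigr)\le\Prob(\kappa_n\ge3).
\]

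\textbf{Main obstacle.} The only delicate point is the conditioning in Step 1. The permutation \(\pi\) is random, so one must check that relabeling by a diagonal-measurable permutation preserves the i.i.d. law of the off-diagonal family. This holds because the off-diagonal family is exchangeable under index permutations and independent of the diagonal. Everything else is direct set inclusion.
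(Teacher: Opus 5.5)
Your proposal is correct and follows essentially the same route as the paper. Both arguments use conditional independence of the relabeled off-diagonal entries given the diagonal, passed down to the order statistics because the thresholds \(\tau_{ij}\) depend only on \(Z_{(1)},\ldots,Z_{(n)}\), and then the sandwich \(\{\kappa_n=2\}\subseteq\bigcup_{i<j}F_{ij}\subseteq\{\kappa_n>1\}\), obtained from almost-sure uniqueness and the exact two-point threshold; you simply make explicit the diagonal-measurable relabeling and the strict inequality below \(Z_{(1)}\), which the paper states in the surrounding text.
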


\begin{proof}
The product formula follows from the conditional-independence statement above, since each \(F_{ij}\) depends on only one ordered off-diagonal entry \(X_{ij}\). If \(\kappa_n=2\), then the global optimizer lies on some two-coordinate segment and has objective value strictly below every vertex, so the corresponding event \(F_{ij}\) occurs. Conversely, if some \(F_{ij}\) occurs, then the global optimum is strictly smaller than every vertex value, so \(\kappa_n\ne1\). Hence
\[
\{\kappa_n=2\}\subseteq\bigcup_{i<j}F_{ij}\subseteq\{\kappa_n>1\},
\]
which implies \eqref{eq:4.9}.
\end{proof}

Thus, to estimate \(\Prob(\kappa_n=2)\), it is enough to estimate the probability of a union of conditionally independent two-coordinate events, with an error bounded by \(\Prob(\kappa_n\ge3)\).

\section{Product decomposition}

For every \(j\ge2\), the threshold for the pair \((1,j)\) satisfies
\[
\tau_{1j}=Z_{(1)}.
\]
Thus the events \(F_{1j}\), \(j=2,\ldots,n\), all have the same conditional probability
\begin{equation}
q_n:=\Prob(F_{1j}\mid Z_{(1)},\ldots,Z_{(n)})=\Phi(\sqrt2 Z_{(1)}).
\label{eq:5.1}
\end{equation}
Factor the product in \eqref{eq:4.8} as
\begin{equation}
\prod_{1\le i<j\le n}(1-p_{ij})
=(1-q_n)^{n-1}\prod_{2\le i<j\le n}(1-p_{ij}).
\label{eq:5.2}
\end{equation}
The first factor, \((1-q_n)^{n-1}\), is the conditional probability that none of the pairs \((1,j)\), \(j\ge2\), produces an improvement over the best vertex. We therefore define
\begin{equation}
S_n:=\E\left[1-(1-q_n)^{n-1}\right].
\label{eq:5.3}
\end{equation}
Equivalently, after the diagonal order statistics have been fixed, this is the probability that at least one pair involving the smallest diagonal entry produces a point below the best vertex value.

A second factorization isolates pairs involving the second smallest diagonal entry. Define
\begin{equation}
A_n:=\E\left[(1-q_n)^{n-1}
\left(1-\prod_{j=3}^n(1-p_{2j})\right)\right].
\label{eq:5.4}
\end{equation}
This term accounts for pairs \((2,j)\), \(j\ge3\), on the event that no pair involving the smallest diagonal entry has already produced a point below the best vertex value. Finally define
\begin{equation}
B_n:=\E\left[(1-q_n)^{n-1}\prod_{j=3}^n(1-p_{2j})
\left(1-\prod_{3\le i<j\le n}(1-p_{ij})\right)\right].
\label{eq:5.5}
\end{equation}
This term accounts for the remaining pairs \((i,j)\) with \(3\le i<j\le n\), on the event that neither of the two preceding classes of pairs has occurred.

\begin{proposition}[Exact product decomposition]\label{prop:product-decomp}
For every \(n\ge3\),
\begin{equation}
\E\left[1-\prod_{1\le i<j\le n}(1-p_{ij})\right]
=S_n+A_n+B_n.
\label{eq:5.6}
\end{equation}
Consequently,
\begin{equation}
\Prob(\kappa_n=2)
=S_n+A_n+B_n+O(\Prob(\kappa_n\ge3)).
\label{eq:5.7}
\end{equation}
\end{proposition}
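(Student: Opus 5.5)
The identity \eqref{eq:5.6} is an exact telescoping of the product, so I would prove it pointwise for every realization of the diagonal order statistics and then take expectations. Introduce the conditional quantities
\[
P_1:=(1-q_n)^{n-1},\qquad P_2:=\prod_{j=3}^n(1-p_{2j}),\qquad P_3:=\prod_{3\le i<j\le n}(1-p_{ij}).
\]
By \eqref{eq:5.2}, and because \(\tau_{1j}=Z_{(1)}\) gives \(p_{1j}=q_n\) for every \(j\ge2\), we have \(\prod_{1\le i<j\le n}(1-p_{ij})=P_1P_2P_3\). The set of pairs \(\{(i,j):2\le i<j\le n\}\) splits disjointly into the pairs with \(i=2\) and those with \(i\ge3\), which is the splitting \(P_2P_3\).

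The key algebraic step is the telescoping identity
\[
1-P_1P_2P_3=(1-P_1)+P_1(1-P_2)+P_1P_2(1-P_3),
\]
which is verified by expanding the right-hand side. All three summands are products of numbers in \([0,1]\), so they are bounded measurable functions of \(Z_{(1)},\ldots,Z_{(n)}\) and are integrable. Taking expectations and using linearity, the three terms match \eqref{eq:5.3}, \eqref{eq:5.4} and \eqref{eq:5.5} exactly. This proves \eqref{eq:5.6} for every \(n\ge3\). The restriction \(n\ge3\) only ensures that the pair \((2,3)\) exists. If \(n=3\), the product \(P_3\) is empty and equals \(1\), so \(B_n=0\) and the identity still holds.

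For \eqref{eq:5.7}, combine \eqref{eq:4.8} with \eqref{eq:5.6} to get \(\Prob(\bigcup_{i<j}F_{ij})=S_n+A_n+B_n\). Then invoke \eqref{eq:4.9}, which gives
\[
0\le S_n+A_n+B_n-\Prob(\kappa_n=2)\le\Prob(\kappa_n\ge3).
\]
This is precisely \(\Prob(\kappa_n=2)=S_n+A_n+B_n+O(\Prob(\kappa_n\ge3))\), with implied constant \(1\).

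There is no genuine obstacle here. The only point requiring care is that \(p_{ij}\) and \(q_n\) are defined through conditioning on the order statistics alone. So I would recall the remark preceding \eqref{eq:4.7}: the thresholds depend only on the \(Z_{(r)}\), so the product formula holds under that conditioning, and \eqref{eq:4.8} indeed applies with these \(p_{ij}\).
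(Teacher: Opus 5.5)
Your proposal is correct and essentially matches the paper's proof. The paper also splits \(\prod_{2\le i<j\le n}(1-p_{ij})\) into the \(i=2\) and \(i\ge3\) factors, applies \(1-uv=(1-u)+u(1-v)\) (twice, which is your three-term telescoping identity), takes expectations, and then obtains \eqref{eq:5.7} from \eqref{eq:4.8}--\eqref{eq:4.9} as you do.
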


\begin{proof}
Factor
\[
\prod_{2\le i<j\le n}(1-p_{ij})
=
\left(\prod_{j=3}^n(1-p_{2j})\right)
\left(\prod_{3\le i<j\le n}(1-p_{ij})\right).
\]
Then apply \(1-uv=(1-u)+u(1-v)\), multiply by \((1-q_n)^{n-1}\), and take expectations. Equation \eqref{eq:5.7} follows from \eqref{eq:4.9}.
\end{proof}

Equation \eqref{eq:5.7} estimates the support-two probability by three explicit two-coordinate contributions, with the only remaining error controlled by \(\Prob(\kappa_n\ge3)\). After Section \ref{sec:support-three} proves that this error is negligible on the \(B_n\) scale, the same expansion also gives the probability \(\Prob(\kappa_n>1)\) that the optimizer is not a vertex. Here \(S_n\) is the contribution from edges incident to the best diagonal vertex, \(A_n\) is the contribution from edges incident to the second-best diagonal vertex after the first class of edges has failed, and \(B_n\) is the remaining contribution involving neither of the two lowest diagonal entries.

\subsection{A product-linearization estimate}

The same elementary product estimate is used in three places: for \(S_n\), where all factors are equal to \(q_n\); for \(A_n\), where the factors are \(p_{2j}\); and for \(B_n\), where the factors are \(p_{ij}\) with \(3\le i<j\le n\). We state it once to avoid repeating the same union-product analysis.

\begin{lemma}[Product linearization]\label{lem:product-linearization}
Let \(0\le a_1,\ldots,a_m\le1\), set \(S:=\sum_{r=1}^m a_r\), and
\[
T:=1-
\prod_{r=1}^m(1-a_r).
\]
Then
\begin{equation}\label{eq:product-linearization}
S-\frac12S^2\le T\le S.
\end{equation}
In particular,
\begin{equation}\label{eq:product-linearization-error}
0\le S-T\le \frac12S^2.
\end{equation}
\end{lemma}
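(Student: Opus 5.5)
The plan is to establish both inequalities in \eqref{eq:product-linearization} from the single identity
\[
T=\sum_{r=1}^m a_r\prod_{s<r}(1-a_s).
\]
This identity telescopes from \(\prod_{s\le r-1}(1-a_s)-\prod_{s\le r}(1-a_s)=a_r\prod_{s<r}(1-a_s)\). It is the inclusion--exclusion decomposition by first occurrence. It is also the same identity \(1-uv=(1-u)+u(1-v)\) already used in Proposition \ref{prop:product-decomp}, iterated over the factors.

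For the upper bound I will use that every partial product \(\prod_{s<r}(1-a_s)\) lies in \([0,1]\), because \(0\le a_s\le1\). Each summand is therefore at most \(a_r\), and summing gives \(T\le S\). Equivalently, this is the union bound \(1-\prod(1-a_r)\le\sum a_r\).

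For the lower bound I will use the Weierstrass product inequality
\[
\prod_{s<r}(1-a_s)\ge 1-\sum_{s<r}a_s,
\]
valid for \(a_s\in[0,1]\). It follows by a one-line induction: multiplying \(1-\sum_{s<r}a_s\) by \(1-a_r\ge0\) yields a product at least \(1-\sum_{s\le r}a_s\), since the cross term \(a_r\sum_{s<r}a_s\) is nonnegative. The induction remains valid when the running right-hand side is negative, since the product itself is nonnegative. Substituting into the identity gives
\[
T\ge \sum_r a_r-\sum_{s<r}a_sa_r = S-\frac12\Bigl(S^2-\sum_r a_r^2\Bigr)\ge S-\frac12S^2.
\]
Then \eqref{eq:product-linearization-error} is an immediate rearrangement.

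I do not expect a real obstacle here. The only point requiring care is the sign bookkeeping in the Weierstrass inequality, which needs \(a_r\le1\) so that each multiplier \(1-a_r\) is nonnegative.
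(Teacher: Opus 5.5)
Your proof is correct and is essentially the paper's argument. The paper also starts from the first-occurrence identity and gets \(T\le S\) from the partial products lying in \([0,1]\); for the lower bound it applies that same upper bound to each partial product to get \(1-\prod_{s<r}(1-a_s)\le\sum_{s<r}a_s\), which is exactly your Weierstrass inequality, so your separate induction (and your retained, unneeded extra term \(\tfrac12\sum_r a_r^2\)) is the only difference.
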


\begin{proof}
The identity
\[
T=\sum_{r=1}^m a_r\prod_{s<r}(1-a_s)
\]
gives \(T\le S\). It also gives
\[
S-T=\sum_{r=1}^m a_r\left(1-\prod_{s<r}(1-a_s)\right)
\le \sum_{r=1}^m a_r\sum_{s<r}a_s
\le \frac12S^2,
\]
where the middle inequality applies the bound \(T\le S\) to the partial product. This proves both displays.
\end{proof}

\section{The minimum-diagonal contribution}\label{sec:S}

Let
\[
U_{(1)}:=\Phi(Z_{(1)}).
\]
Then \(U_{(1)}\) has the \(\operatorname{Beta}(1,n)\) distribution, and
\begin{equation}
q_n=\Psi(U_{(1)}).
\label{eq:6.1}
\end{equation}

Here \(\Psi\) is the GOE off-diagonal tail-transfer function from Corollary \ref{cor:goe-tail-transfer}. We shall use the asymptotic formula \eqref{eq:goe-tail-transfer} and the uniform upper bound \eqref{eq:goe-tail-transfer-upper} repeatedly in this section.

\begin{lemma}[Moments of the minimum-diagonal pair probability]\label{lem:qn-moments}
As \(n\to\infty\),
\begin{equation}
\E q_n=2\sqrt{2\pi}\frac{\sqrt{\log n}}{n^2}(1+o(1)),
\label{eq:6.4}
\end{equation}
and
\begin{equation}
\E q_n^2\le C\frac{\log n}{n^4}
\label{eq:6.5}
\end{equation}
for all sufficiently large \(n\).
\end{lemma}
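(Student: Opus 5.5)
We write both moments as Beta integrals against the density of \(U_{(1)}\) and then apply the tail and Beta-log estimates of Section~\ref{sec:toolbox}. Since \(U_{(1)}\sim\operatorname{Beta}(1,n)\) has density \(n(1-u)^{n-1}\) on \((0,1)\), relation \eqref{eq:6.1} gives
\[
\E q_n=n\int_0^1\Psi(u)(1-u)^{n-1}\,\dd u,\qquad
\E q_n^2=n\int_0^1\Psi(u)^2(1-u)^{n-1}\,\dd u .
\]
Equivalently, \(\E q_n=\Prob(Y\le Z_{(1)})\) with \(Y\sim N(0,1/2)\) independent of the diagonal. Hence \eqref{eq:6.4} is exactly Theorem~\ref{thm:normal-minimum} with \(a=\sqrt2\). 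In that case \(a^2=2\), the constant is \((4\pi)^{1/2}\Gamma(3)/\sqrt2=2\sqrt{2\pi}\), and the logarithmic power is \((\log n)^{1/2}\). So the first claim is immediate.

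If one prefers to argue directly, split the integral at \(u=1/2\). On \([1/2,1)\) we use \(\Psi\le1\), and the contribution is at most \(n2^{-(n-1)}\), which is negligible against \(\sqrt{\log n}/n^2\). On \((0,1/2]\), Corollary~\ref{cor:goe-tail-transfer} gives \(\Psi(u)=\sqrt{2\pi}\,u^2\sqrt{\log(1/u)}(1+o(1))\) as \(u\downarrow0\), and \eqref{eq:goe-tail-transfer-upper} provides a uniform bound. The \(o(1)\) must be passed through the integral. Fix \(\delta>0\) and split at \(u=\delta\). The part over \([\delta,1/2]\) is exponentially small, by the same bound \((1-\delta)^{n-1}\). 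On \((0,\delta]\) the relative error is at most \(\varepsilon(\delta)\). Lemma~\ref{lem:beta-log} with \(r=2\), \(\gamma=1/2\) then yields \(\sqrt{2\pi}\,\Gamma(3)\,n^{-2}(\log n)^{1/2}(1+O(\varepsilon(\delta))+o(1))\). Letting \(\delta\downarrow0\) gives \eqref{eq:6.4}.

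For \eqref{eq:6.5}, square the upper bound \eqref{eq:goe-tail-transfer-upper} to get \(\Psi(u)^2\le C^2u^4\log(1/u)\) on \((0,1/2]\). The tail over \([1/2,1)\) is again at most \(n2^{-(n-1)}\). Bound \eqref{eq:beta-log-upper} with \(r=4\), \(\gamma=1\) then gives \(\E q_n^2\le Cn^{-4}\log n\). The only point needing care is uniformity of the \(o(1)\) in the first moment, and the \(\delta\)-splitting above handles it. Since Theorem~\ref{thm:normal-minimum} already packages this step, I expect no real obstacle.
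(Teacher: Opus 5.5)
Your proof is correct and is essentially the paper's argument: the \(\operatorname{Beta}(1,n)\) representation of \(U_{(1)}\), an exponentially small contribution from \([1/2,1)\), and the tail transfer of Corollary~\ref{cor:goe-tail-transfer} combined with Lemma~\ref{lem:beta-log} at \((r,\gamma)=(2,1/2)\) and \((4,1)\). Your shortcut of reading \eqref{eq:6.4} directly off Theorem~\ref{thm:normal-minimum} with \(a=\sqrt2\), where the constant is \((4\pi)^{1/2}\Gamma(3)/\sqrt2=2\sqrt{2\pi}\), is also valid; the paper uses this same identification for \(P_{n,1}\) in the appendix.
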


\begin{proof}
Since \(U_{(1)}\) has density \(n(1-u)^{n-1}\),
\[
\E q_n=n\int_0^1(1-u)^{n-1}\Psi(u)\dd u.
\]
The part over \([1/2,1]\) is exponentially small. On \((0,1/2]\), Corollary \ref{cor:goe-tail-transfer} and Lemma \ref{lem:beta-log} with \(r=2\), \(\gamma=1/2\), give
\[
\E q_n
=\sqrt{2\pi}\,\Gamma(3)n^{-2}(\log n)^{1/2}(1+o(1))
=2\sqrt{2\pi}\frac{\sqrt{\log n}}{n^2}(1+o(1)).
\]
Similarly, \eqref{eq:goe-tail-transfer-upper} gives
\[
\E q_n^2
\le Cn\int_0^{1/2}(1-u)^{n-1}u^4\log(1/u)\dd u+O(e^{-cn}).
\]
Applying Lemma \ref{lem:beta-log} with \(r=4\) and \(\gamma=1\) yields \(\E q_n^2\le C(\log n)n^{-4}\).
\end{proof}

\begin{proposition}[Minimum-diagonal contribution]\label{prop:S-asympt}
Define
\begin{equation}
S_n:=\E\left[1-(1-q_n)^{n-1}\right].
\label{eq:6.6}
\end{equation}
Then
\begin{equation}
S_n=2\sqrt{2\pi}\frac{\sqrt{\log n}}n(1+o(1)).
\label{eq:6.7}
\end{equation}
\end{proposition}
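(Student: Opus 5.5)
Apply Lemma \ref{lem:product-linearization} pointwise, conditionally on \(Z_{(1)}\), with all \(m=n-1\) factors equal to \(a_r=q_n\). With \(S=(n-1)q_n\) and \(T=1-(1-q_n)^{n-1}\), this gives
\[
(n-1)q_n-\tfrac12(n-1)^2q_n^2\le 1-(1-q_n)^{n-1}\le (n-1)q_n .
\]
Taking expectations then sandwiches \(S_n\):
\[
(n-1)\E q_n-\tfrac12(n-1)^2\E q_n^2\le S_n\le (n-1)\E q_n .
\]

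By \eqref{eq:6.4} in Lemma \ref{lem:qn-moments}, the main term is
\[
(n-1)\E q_n=2\sqrt{2\pi}\,\frac{\sqrt{\log n}}{n}(1+o(1)),
\]
since \((n-1)/n\to1\). By \eqref{eq:6.5}, the correction satisfies
\[
(n-1)^2\E q_n^2\le C\frac{\log n}{n^2}.
\]
This is \(o(\sqrt{\log n}/n)\), because the ratio is \(C\sqrt{\log n}/n\to0\). The lower and upper bounds therefore have the same leading asymptotics, and \eqref{eq:6.7} follows.

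No step is a real obstacle here. All the analytic work, namely the tail transfer through \(\Psi\) and the Beta-log integral with \(r=2,\gamma=1/2\), is already contained in Lemma \ref{lem:qn-moments}. The only point to check is that the quadratic error from linearization is smaller than the first moment by a factor of order \(\sqrt{\log n}/n\), which the second-moment bound \eqref{eq:6.5} provides directly.
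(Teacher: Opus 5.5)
Your proposal is correct and follows the paper's proof: Lemma \ref{lem:product-linearization} with the $n-1$ identical factors $q_n$, expectations, and the two moment bounds of Lemma \ref{lem:qn-moments}. Your check that $n^2\E q_n^2=O(\log n/n^2)=o(\sqrt{\log n}/n)$ is exactly the paper's closing observation.
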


\begin{proof}
Apply Lemma \ref{lem:product-linearization} to the \(n-1\) identical factors \(a_r=q\). Then
\[
0\le (n-1)q-\{1-(1-q)^{n-1}\}\le \frac12(n-1)^2q^2.
\]
Taking \(q=q_n\), then taking expectations and applying Lemma \ref{lem:qn-moments} gives
\[
S_n=(n-1)\E q_n+O(n^2\E q_n^2)
=2\sqrt{2\pi}\frac{\sqrt{\log n}}n(1+o(1)),
\]
because \(n^2\E q_n^2=O((\log n)/n^2)=o(\sqrt{\log n}/n)\).
\end{proof}

\section{The term \texorpdfstring{\(A_n\)}{An}}\label{sec:A}

This section proves
\begin{equation}
A_n\sim 3\sqrt{\frac\pi2}\frac1{n\sqrt{\log n}},
\label{eq:7.1}
\end{equation}
where \(A_n\) is smaller than the contribution \(S_n\) from the smallest diagonal entry by a factor \(1/\log n\) and represents the rank-two correction term: it counts edges from the second-smallest diagonal vertex to the remaining vertices after the minimum-diagonal star has been removed. To evaluate \(A_n\), and later \(B_n\), we must integrate over the event that the next diagonal order statistics are close enough to the absolute minimum to affect the two-coordinate threshold. The Gaussian tail concentrates the main contribution in a boundary layer of relative width \(O(1/\log n)\). We therefore introduce the scaling \(v=x(1+\beta/s^2)/n\), with \(s=s_n(x)\) and \(\beta=O(1)\).

\subsection{Averaged edge-probability function}

Let
\begin{equation}
S_{2,n}:=\sum_{j=3}^n p_{2j},
\label{eq:7.2}
\end{equation}
with \(p_{ij}\) as defined in \eqref{eq:4.6}.

\begin{lemma}[Reduction of \(A_n\) to its linear intensity]\label{lem:A-linearization}
As \(n\to\infty\),
\begin{equation}
A_n=\E S_{2,n}+O\left(\frac{\log n}{n^2}\right).
\label{eq:7.3}
\end{equation}
\end{lemma}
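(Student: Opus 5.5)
The plan is to compare \(A_n\) with \(\E S_{2,n}\) term by term. Both error terms will be controlled by the crude domination \(p_{2j}\le q_n\), together with the second-moment bound of Lemma \ref{lem:qn-moments}. Write
\[
T_{2,n}:=1-\prod_{j=3}^n(1-p_{2j}),\qquad R_n:=1-(1-q_n)^{n-1},
\]
so that \(A_n=\E[(1-R_n)T_{2,n}]\). This gives the exact decomposition
\[
\E S_{2,n}-A_n=\E\bigl[S_{2,n}-T_{2,n}\bigr]+\E\bigl[R_nT_{2,n}\bigr].
\]
Both terms on the right are nonnegative, so it suffices to bound each by \(O(\log n/n^2)\).

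First I record the domination. By \eqref{eq:4.4}, every threshold satisfies \(\tau_{ij}\le Z_{(1)}\), because the square-root term is nonnegative. Since \(\Phi\) is increasing, \(p_{2j}=\Phi(\sqrt2\tau_{2j})\le\Phi(\sqrt2Z_{(1)})=q_n\) for every \(j\ge3\). Summing over \(j\) gives \(S_{2,n}\le (n-2)q_n\le nq_n\).

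For the first term, Lemma \ref{lem:product-linearization} applied to the factors \(a_r=p_{2j}\) gives \(0\le S_{2,n}-T_{2,n}\le\frac12S_{2,n}^2\le\frac12n^2q_n^2\). For the second term, the same lemma gives \(R_n\le(n-1)q_n\) and \(T_{2,n}\le S_{2,n}\le nq_n\). Hence \(0\le R_nT_{2,n}\le n^2q_n^2\).

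Taking expectations and using \eqref{eq:6.5}, namely \(\E q_n^2\le C(\log n)/n^4\), both terms are at most \(Cn^2\E q_n^2=O(\log n/n^2)\). This gives \eqref{eq:7.3}.

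I do not expect a genuine obstacle here. The only point to check is the monotonicity \(\tau_{2j}\le Z_{(1)}\), which is immediate from the definition of \(\tau_{ij}\). The error \(O(\log n/n^2)\) is indeed negligible against the target scale \(1/(n\sqrt{\log n})\) of \eqref{eq:7.1}, so this crude bound suffices for that later use.
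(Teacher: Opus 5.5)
Your proof is correct and follows essentially the same route as the paper. You split \(\E S_{2,n}-A_n\) into the linearization error \(\E[S_{2,n}-T_{2,n}]\) and the prefactor error \(\E\bigl[\bigl(1-(1-q_n)^{n-1}\bigr)T_{2,n}\bigr]\), and bound each by a constant times \(n^2\E q_n^2\) using \(p_{2j}\le q_n\) and \eqref{eq:6.5}; noting that both pieces are nonnegative gives the slightly sharper one-sided form \(0\le \E S_{2,n}-A_n=O(\log n/n^2)\), whereas the paper works with absolute values.
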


\begin{proof}
Let
\[
T_{2,n}:=1-\prod_{j=3}^n(1-p_{2j}).
\]
By Lemma \ref{lem:product-linearization},
\[
0\le S_{2,n}-T_{2,n}\le \frac12S_{2,n}^2.
\]
Since \(\tau_{2j}\le Z_{(1)}\), we have \(p_{2j}\le q_n\), and hence
\(S_{2,n}\le(n-2)q_n\). Therefore,
\[
\E|S_{2,n}-T_{2,n}|
\le \frac12(n-2)^2\E q_n^2
=O\left(\frac{\log n}{n^2}\right).
\]
This error is \(o(n^{-1}(\log n)^{-1/2})\), the scale on which \(A_n\) is evaluated in Theorem \ref{thm:I-A}; hence the product and its linearized sum have the same first-order expectation. Moreover,
\[
0\le1-(1-q_n)^{n-1}\le(n-1)q_n,
\]
and \(T_{2,n}\le S_{2,n}\le(n-2)q_n\). Hence
\[
\E\left|\bigl((1-q_n)^{n-1}-1\bigr)T_{2,n}\right|
\le (n-1)(n-2)\E q_n^2
=O\left(\frac{\log n}{n^2}\right).
\]
Finally,
\[
A_n-\E S_{2,n}
=\E\{(1-q_n)^{n-1}T_{2,n}-S_{2,n}\}.
\]
The absolute value of the right-hand side is bounded by the sum of the two error terms just estimated, which proves \eqref{eq:7.3}.
\end{proof}

Let \(U_{(r)}:=\Phi(Z_{(r)})\). For \(0<u<v<w<1\), define
\begin{equation}
p(u,v,w):=\Phi\left(\sqrt2\left[\Phi^{-1}(u)-
\sqrt{(\Phi^{-1}(v)-\Phi^{-1}(u))(\Phi^{-1}(w)-\Phi^{-1}(u))}\right]\right).
\label{eq:7.5}
\end{equation}
Define the averaged edge-probability function
\begin{equation}
G(u,v):=\frac1{1-v}\int_v^1 p(u,v,w)\dd w,
\qquad 0<u<v<1.
\label{eq:7.6}
\end{equation}
When boundary-layer estimates below are stated at \(\beta=0\), so that \(v=u\), the values of \(G\), \(H\), and \(p(u,v,v)\) are understood by continuous extension from \(v>u\).

Set
\begin{equation}
I_n:=(n-2)\E G(U_{(1)},U_{(2)}).
\label{eq:7.9}
\end{equation}

\begin{proposition}[Averaged-function representation]\label{prop:G-representation}
We have
\begin{equation}
\E S_{2,n}=I_n=(n-2)\E G(U_{(1)},U_{(2)}).
\label{eq:7.7}
\end{equation}
Equivalently,
\begin{equation}
\E S_{2,n}
=n(n-1)(n-2)\int_0^1\int_0^v(1-v)^{n-2}G(u,v)\dd u\dd v.
\label{eq:7.8}
\end{equation}
\end{proposition}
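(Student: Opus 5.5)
We need to prove that \(\E S_{2,n}=(n-2)\E G(U_{(1)},U_{(2)})\), together with the explicit triple-integral form. The plan is to reduce everything to uniform order statistics and use the standard conditional structure of order statistics given the two smallest values.

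First, since \(\Phi\) is strictly increasing, the threshold in \eqref{eq:4.4} expressed through \(U_{(r)}=\Phi(Z_{(r)})\) gives
\[
p_{2j}=\Phi\bigl(\sqrt2\,\tau_{2j}\bigr)=p(U_{(1)},U_{(2)},U_{(j)}),\qquad j\ge3,
\]
with \(p\) as in \eqref{eq:7.5}. Hence \(S_{2,n}=\sum_{j=3}^n p(U_{(1)},U_{(2)},U_{(j)})\). The key observation is that this sum is symmetric in the values \(U_{(3)},\ldots,U_{(n)}\). It therefore equals \(\sum_{k\ne\pi(1),\pi(2)}p(U_{(1)},U_{(2)},\Phi(Z_k))\), a sum over the unordered set of remaining diagonal entries.

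Next, use the fact that, conditional on \(U_{(1)}=u\) and \(U_{(2)}=v\), the remaining \(n-2\) unordered uniforms are i.i.d. uniform on \((v,1)\). This follows from the joint density \(n!\) of the ordered uniform sample: integrating out all but the two smallest gives the density \(n(n-1)(1-v)^{n-2}\) on \(0<u<v<1\). Dividing the full density by this marginal leaves \((n-2)!/(1-v)^{n-2}\) on the ordered set in \((v,1)\), which is the law of the order statistics of \(n-2\) i.i.d. \(\mathrm{Unif}(v,1)\) variables. Consequently
\[
\E\bigl[S_{2,n}\mid U_{(1)}=u,U_{(2)}=v\bigr]=(n-2)\,\frac1{1-v}\int_v^1p(u,v,w)\,\dd w=(n-2)G(u,v).
\]
Taking expectations yields \eqref{eq:7.7}. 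Inserting the joint density \(n(n-1)(1-v)^{n-2}\) of \((U_{(1)},U_{(2)})\) then gives \eqref{eq:7.8}.

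The only point needing care is integrability and measurability, which is what licenses the conditioning and Fubini steps. This is immediate here because \(0\le p\le1\), so every integrand is bounded and Tonelli applies. Ties have probability zero, and \(p(u,v,w)\) is continuous for \(u<v<w\), so there is no hidden obstacle. The main step is thus the conditional-i.i.d. description of the upper order statistics. Alternatively, that step can be bypassed by writing \(\E S_{2,n}\) directly as an integral against the joint density of \((U_{(1)},U_{(2)},U_{(j)})\), summing over \(j\), and recognizing the sum of the marginal densities of \(U_{(3)},\ldots,U_{(n)}\) given \((u,v)\) as \((n-2)/(1-v)\) on \((v,1)\).
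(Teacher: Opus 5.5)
Your proposal is correct and follows the paper's proof. You condition on \((U_{(1)},U_{(2)})=(u,v)\), use that the remaining \(n-2\) values are i.i.d.\ uniform on \((v,1)\) to get conditional mean \((n-2)G(u,v)\), and integrate against the density \(n(n-1)(1-v)^{n-2}\). Your rewriting of \(S_{2,n}\) as a symmetric sum over the unordered remaining values is a slightly more precise statement of what the paper calls exchangeability of the summands \(p_{2j}\). This matters because the ordered summands themselves are monotone in \(j\), not exchangeable.
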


\begin{proof}
Conditioned on \((U_{(1)},U_{(2)})=(u,v)\), the unordered remaining uniforms are independent and uniformly distributed on \((v,1)\). The summands \(p_{2j}\), \(j=3,\ldots,n\), are therefore exchangeable under this conditional law. Consider any index \(j\ge3\). The corresponding uniform variable takes a value \(w\in(v,1)\). The conditional probability that the pair \((2,j)\) satisfies the threshold is \(p(u,v,w)\). Averaging this probability over \(w\) gives
\[
\frac1{1-v}\int_v^1p(u,v,w)\dd w=G(u,v).
\]
There are \(n-2\) such indices, so linearity of conditional expectation gives
\[
\E(S_{2,n}\mid U_{(1)}=u,U_{(2)}=v)=(n-2)G(u,v).
\]
Taking expectation gives \eqref{eq:7.7}. The density of \((U_{(1)},U_{(2)})\) is \(n(n-1)(1-v)^{n-2}\1_{0<u<v<1}\), which gives \eqref{eq:7.8}.
\end{proof}

We now evaluate \(I_n\).

\subsection{Boundary-layer asymptotics for the averaged edge probability}

Recall the quantile and logarithmic scales \(s_n(x)\), \(L_n(x)\), and \(\ell_n(x)\) from Lemma \ref{lem:quantile-scale}. The leading contribution comes from the range where the second order statistic is close to the minimum on the scale \(v/u-1=O(1/\log n)\). We write this range as
\begin{equation}
 u=\frac xn,
 \qquad
 v=\frac{x(1+\beta/s^2)}n,
 \qquad s=s_n(x),
\label{eq:7.11}
\end{equation}
where we use \(\beta\) to rescale the boundary layer.

For later use we record the common integral representation. Given \(0<u<v<1\), write
\[
s=-\Phi^{-1}(u),\qquad t=-\Phi^{-1}(v),\qquad h=s-t.
\]
Changing variables \(w=\Phi(z)\) in \eqref{eq:7.6} gives
\begin{equation}
G(u,v)=\frac1{1-v}\int_{-t}^\infty
\Phi\left(-\sqrt2\left[s+\sqrt{h(z+s)}\right]\right)\phi(z)\dd z.
\label{eq:7.13}
\end{equation}
We split this integral at \(-s/2\):
\[
G(u,v)=G_-(u,v)+G_+(u,v),
\]
where \(G_-\) and \(G_+\) denote the contributions from
\[
D_-:=[-t,\infty)\cap(-\infty,-s/2),
\qquad
D_+:=[-t,\infty)\cap[-s/2,\infty),
\]
respectively. The set \(D_-\) is interpreted as empty when \(-t\ge -s/2\). Both the boundary limit and the global bounds below are consequences of this representation, Mills' inequality, and Lemma \ref{lem:quantile-gap}. The following lemma records the estimates for the two parts of the split.

\begin{lemma}[Basic split bounds for \(G\)]\label{lem:G-split-basic}
In the notation of \eqref{eq:7.13}, with \(0<u<v<1\) and \(G=G_-+G_+\) split at \(-s/2\), the following bounds hold for all sufficiently large \(s=-\Phi^{-1}(u)\):
\begin{align}
G_+(u,v)
&\le C\frac{e^{-s^2}}s\exp\{-\sqrt2\,s\sqrt{sh}\},
\label{eq:G-basic-plus}\\
G_-(u,v)
&\le C\frac{e^{-s^2}}{s^2}\exp\{-s^2/8-2sh\}.
\label{eq:G-basic-minus}
\end{align}
\end{lemma}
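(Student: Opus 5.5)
The plan is to bound the integrand in \eqref{eq:7.13} pointwise with Mills' inequality and then integrate the Gaussian weight \(\phi(z)\) separately over \(D_+\) and \(D_-\). Write
\[
a(z):=s+\sqrt{h(z+s)},
\]
so the integrand is \(\Phi(-\sqrt2\,a(z))\phi(z)\). For \(z\ge -t\) we have \(z+s\ge s-t=h\ge0\), hence \(a(z)\ge s\). For \(s\) large, the upper Mills bound in \eqref{eq:mills} gives
\[
\Phi(-\sqrt2 a)\le \frac{\phi(\sqrt2 a)}{\sqrt2 a}\le C\frac{e^{-a^2}}{s},
\qquad
e^{-a^2}\le e^{-s^2}\exp\bigl\{-2s\sqrt{h(z+s)}\bigr\}.
\]
Here I simply drop the nonnegative term \(h(z+s)\) from \(a^2\). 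All the remaining work is to lower-bound \(z+s\) on each region and to control the prefactor \(1/(1-v)\).

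For \(G_+\), take \(z\ge -s/2\). Then \(z+s\ge s/2\), so the exponential factor is at most \(\exp\{-2s\sqrt{hs/2}\}=\exp\{-\sqrt2\,s\sqrt{sh}\}\), uniformly on \(D_+\). The key observation about the prefactor is that the total Gaussian mass of the integration range equals \(1-v\):
\[
\int_{-t}^\infty\phi(z)\dd z=\Phi(t)=1-v.
\]
Hence \(\frac1{1-v}\int_{D_+}\phi\le1\), and this holds even when \(v\) is not small (for instance when \(t\le0\)). This yields \eqref{eq:G-basic-plus}.

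For \(G_-\), the region \(D_-\) is nonempty only when \(t>s/2\). In that case \(t\to\infty\) together with \(s\), so \(v=\Phi(-t)\le1/2\) and \(1/(1-v)\le2\). On \(D_-\) I use only the weaker bound \(z+s\ge h\). This gives \(\sqrt{h(z+s)}\ge h\), so the exponential factor is at most \(e^{-2sh}\). The Gaussian mass is bounded by
\[
\int_{-\infty}^{-s/2}\phi=\Phi(-s/2)\le \frac{2\phi(s/2)}{s}\le C\frac{e^{-s^2/8}}{s}.
\]
Multiplying by the factor \(C e^{-s^2}/s\) from Mills gives \(C e^{-s^2}s^{-2}\exp\{-s^2/8-2sh\}\), which is \eqref{eq:G-basic-minus}.

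The only delicate point is the prefactor \(1/(1-v)\), since \(v\) need not be small a priori. I handle it through the exact mass identity on \(D_+\) and through the forced smallness of \(v\) on \(D_-\). The requirement that \(s\) be sufficiently large is used for the Mills bounds (which need \(a\ge s\ge1\)) and to guarantee \(v\le1/2\) whenever \(D_-\) is nonempty.
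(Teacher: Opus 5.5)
Your proof is correct and follows the paper's argument essentially step for step. It uses the same pointwise Mills bound after dropping \(h(z+s)\) from \(a(z)^2\), the lower bounds \(z+s\ge s/2\) on \(D_+\) and \(z+s\ge h\) on \(D_-\), the exact mass identity \((1-v)^{-1}\int_{-t}^\infty\phi=1\) to neutralize the prefactor on \(D_+\), and \(1-v\ge 1/2\) together with \(\Phi(-s/2)\le Cs^{-1}e^{-s^2/8}\) on \(D_-\).
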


\begin{proof}
For \(G_+\), Mills' inequality and \(z+s\ge s/2\) give the bound
\begin{equation}
\Phi\left(-\sqrt2\left[s+\sqrt{h(z+s)}\right]\right)
\le \frac{C}{s}\exp\{-s^2-2s\sqrt{h(z+s)}\}
\le \frac{C}{s}e^{-s^2}\exp\{-\sqrt2\,s\sqrt{sh}\}.
\label{eq:common-mills-G}
\end{equation}
After integrating against \(\phi(z)\dd z\), the factor \((1-v)^{-1}\) is harmless because
\[
(1-v)^{-1}\int_{-t}^{\infty}\phi(z)\dd z=1.
\]
This proves \eqref{eq:G-basic-plus}.

For \(G_-\), the interval is empty unless \(t>s/2\). In the nonempty case,
\(1-v=\Phi(t)\ge\Phi(s/2)\ge1/2\) for all sufficiently large \(s\). Also, for \(z\in[-t,-s/2]\),
\(z+s\ge h\). Applying Mills' inequality on this interval yields
\[
\Phi\left(-\sqrt2\left[s+\sqrt{h(z+s)}\right]\right)
\le \frac{C}{s}\exp\{-s^2-2sh\}.
\]
Together with the Gaussian tail estimate
\[
\int_{-\infty}^{-s/2}\phi(z)\dd z=\Phi(-s/2)\le Cs^{-1}e^{-s^2/8},
\]
this gives
\[
G_-(u,v)\le C\frac{e^{-s^2}}{s^2}e^{-s^2/8-2sh},
\]
which is \eqref{eq:G-basic-minus}.
\end{proof}

\begin{lemma}[Boundary-layer limit]\label{lem:G-boundary-layer}
Fix \(0<\delta<M<\infty\) and \(B<\infty\). Uniformly for \((x,\beta)\in[\delta,M]\times[0,B]\), with \(s=s_n(x)\),
\begin{equation}
\frac{n^2}{s}G\left(\frac xn,\frac{x(1+\beta/s^2)}n\right)
\longrightarrow
\sqrt\pi\,x^2 e^{-2\sqrt\beta}.
\label{eq:7.12}
\end{equation}
\end{lemma}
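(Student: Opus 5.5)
We work directly with the integral representation \eqref{eq:7.13} and the split \(G=G_-+G_+\) at \(-s/2\). With \(u=x/n\) and \(v=x(1+\beta/s^2)/n\), Lemma \ref{lem:quantile-gap}(i) gives \(s^3h\to\beta\) uniformly on \([\delta,M]\times[0,B]\). Hence \(sh=O(s^{-2})\), so \(t=s+o(1)\) and \(1-v\to1\). The plan is to show that \(G_-\) is negligible and that \(G_+\) carries the limit through a Laplace-type evaluation in which only \(z=O(1)\) matters.

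\textbf{Step 1 (negligibility of \(G_-\)).} By \eqref{eq:G-basic-minus}, \(G_-\le Cs^{-2}e^{-s^2}e^{-s^2/8}\). Identity \eqref{eq:sn-exp-id} gives \(e^{-s^2}=2\pi x^2s^2n^{-2}(1+o(1))\) uniformly for \(x\in[\delta,M]\). Therefore \((n^2/s)G_-=O(s^{-1}e^{-s^2/8})\to0\).

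\textbf{Step 2 (pointwise asymptotics of the \(G_+\) integrand).} For fixed \(z\), write \(\sqrt2[s+\sqrt{h(z+s)}]=\sqrt2 s+\sqrt2\sqrt{h(z+s)}\). Since \(h(z+s)=hs(1+z/s)\) and \(hs\sim\beta/s^2\), we get \(\sqrt{h(z+s)}=\sqrt\beta/s\,(1+o(1))\). Mills' ratio then gives
\[
\Phi\left(-\sqrt2\left[s+\sqrt{h(z+s)}\right]\right)=\frac{1+o(1)}{\sqrt2\,s\sqrt{2\pi}}\exp\left\{-s^2-2s\sqrt{h(z+s)}-h(z+s)\right\}.
\]
Here \(2s\sqrt{h(z+s)}\to2\sqrt\beta\) and \(h(z+s)\to0\). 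The integrand is therefore asymptotic to \(\frac{e^{-s^2}}{2\sqrt\pi\,s}e^{-2\sqrt\beta}\phi(z)\). Integrating \(\phi(z)\) over \(D_+\), which increases to \(\R\), and applying \eqref{eq:sn-exp-id} gives
\[
\frac{n^2}{s}G_+\to\frac{2\pi x^2s^2}{s}\cdot\frac{e^{-2\sqrt\beta}}{2\sqrt\pi\,s}=\sqrt\pi\,x^2e^{-2\sqrt\beta}.
\]
This matches \eqref{eq:7.12}.

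\textbf{Step 3 (domination and uniformity).} This is the main obstacle, because the expansion \(\sqrt{h(z+s)}\approx\sqrt{hs}\) fails when \(|z|\) is comparable to \(s\). On \(D_+\) we have \(z+s\ge s/2\), and the Mills upper bound gives the integrand bound \(\frac{C}{s}e^{-s^2}\phi(z)\), uniformly in the parameters. This bound is integrable, and after multiplication by \(n^2/s\) and use of \eqref{eq:sn-exp-id} it becomes \(Cx^2\phi(z)\), a bounded integrable majorant. Dominated convergence then applies.

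To obtain uniformity in \((x,\beta)\), we argue by sequences. Take any \((x_n,\beta_n)\) in the compact set; the convergence in Step 2 holds along it because all \(o(1)\) terms are uniform. These come from Lemma \ref{lem:quantile-gap}(i), from Mills' ratio \(\bar\Phi(y)\sim\phi(y)/y\), and from \eqref{eq:sn-exp-id}. The limit function \(\sqrt\pi x^2e^{-2\sqrt\beta}\) is continuous, which completes the argument.

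At \(\beta=0\) the term \(\sqrt{h(z+s)}\) is \(o(1/s)\) uniformly for \(z=O(1)\). The same argument then gives the continuous extension value \(\sqrt\pi x^2\).

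Finally, the correction terms need one check. The factor \((1-v)^{-1}=1+O(x/n)\) is harmless. The square-root expansion must be controlled uniformly for \(|z|\le K\), with the tail \(|z|>K\) handled by the majorant; Step 3 supplies exactly this control.
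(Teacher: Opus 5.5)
Your proposal is correct and follows essentially the same route as the paper. Both arguments use the representation \eqref{eq:7.13}, the uniform limit \(s^3h\to\beta\) from Lemma \ref{lem:quantile-gap}(i), and Mills' ratio combined with \eqref{eq:sn-exp-id} for the pointwise limit; both then apply dominated convergence on \(D_+\) with the majorant \(C\phi(z)\), obtained from \(s+\sqrt{h(z+s)}\ge s\), and use \eqref{eq:G-basic-minus} to discard \(G_-\). Your sequence argument for uniformity over \([\delta,M]\times[0,B]\) makes explicit a step the paper leaves implicit; phrasing it as \(s\sqrt{h(z+s)}\to\sqrt\beta\) is the form that remains uniform down to \(\beta=0\).
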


\begin{proof}
In \eqref{eq:7.13}, take \(u=x/n\), \(v=x(1+\beta/s^2)/n\), and \(s=s_n(x)\). Lemma \ref{lem:quantile-gap}\textup{(i)} gives
\begin{equation}
 s^3h\to\beta
\label{eq:7.14}
\end{equation}
uniformly on compact \((x,\beta)\)-sets. For fixed \(z\), \eqref{eq:7.14} implies
\[
h(z+s)=\frac{\beta+o(1)}{s^2},
\]
and hence
\begin{equation}
\left[s+\sqrt{h(z+s)}\right]^2=s^2+2\sqrt\beta+o(1).
\label{eq:7.15}
\end{equation}
Mills' ratio applied to the Gaussian distribution function in \eqref{eq:7.13} gives
\[
\Phi\left(-\sqrt2\left[s+\sqrt{h(z+s)}\right]\right)
=\frac{1+o(1)}{2\sqrt\pi s}
\exp\left(-\left[s+\sqrt{h(z+s)}\right]^2\right).
\]
Combining this with \eqref{eq:sn-exp-id} gives pointwise convergence of the integrand, after multiplication by \(n^2/s\), to
\[
\sqrt\pi\,x^2e^{-2\sqrt\beta}\phi(z).
\]

It remains to pass the limit through the \(z\)-integral. Extend the integrand in \eqref{eq:7.13} by zero below the moving lower limit \(-t\). On \(z\ge -s/2\), \eqref{eq:common-mills-G}, the inequality \(s+\sqrt{h(z+s)}\ge s\), and Lemma \ref{lem:quantile-scale} give the single bound
\[
\frac{n^2}{s}
\Phi\left(-\sqrt2\left[s+\sqrt{h(z+s)}\right]\right)\phi(z)
\le C\phi(z),
\]
uniformly on the fixed compact set, where \(x\) is bounded away from both \(0\) and \(\infty\). For \(G_-\), the basic split estimate \eqref{eq:G-basic-minus} gives
\[
\frac{n^2}{s}G_-(u,v)
\le C\frac{n^2e^{-s^2}}{s^3}e^{-s^2/8},
\]
which tends to zero uniformly on compact \((x,\beta)\)-sets, again by Lemma \ref{lem:quantile-scale}. The dominated convergence theorem, for instance in Billingsley \cite{billingsley}, proves \eqref{eq:7.12}.
\end{proof}

\begin{lemma}[Global bounds for \(G\)]\label{lem:G-global-bounds}
Fix \(\eta\in(0,1]\). There exist constants \(C<\infty\) and \(c>0\) such that, for all sufficiently large \(n\), the following bounds hold.
\begin{enumerate}[label=(\roman*)]
\item If \(0<x\le\sqrt n\), \(s=s_n(x)\), and \(0\le\beta\le\eta s^2\), then
\begin{equation}
G\left(\frac xn,\frac{x(1+\beta/s^2)}n\right)
\le C\frac{x^2s}{n^2}e^{-c\sqrt\beta}.
\label{eq:7.16}
\end{equation}
\item If \(0<x\le\sqrt n\), \((1+\eta)x\le y<n\), \(s=s_n(x)\), and \(a=\log(y/x)\), then the far-ratio split \(G=G_++G_-\) satisfies
\begin{align}
G_+\left(\frac xn,\frac yn\right)
&\le C\frac{x^2s}{n^2}
   \exp\{-c_+s\sqrt a\},
\label{eq:7.17-plus}\\
G_-\left(\frac xn,\frac yn\right)
&\le C\frac{x^2s}{n^2}e^{-s^2/8}\left(\frac xy\right)^{6/5}.
\label{eq:7.17-minus}
\end{align}
Here \(c_+\) may be chosen as any fixed number in \( (1/\sqrt2,\sqrt{6/5}) \). Consequently, after possibly decreasing the constant, the full \(G\) also satisfies
\begin{equation}
G\left(\frac xn,\frac yn\right)
\le C\frac{x^2s}{n^2}
\exp\{-c s\sqrt{\log(y/x)}\},
\qquad c>1/\sqrt2.
\label{eq:7.17}
\end{equation}
\end{enumerate}
\end{lemma}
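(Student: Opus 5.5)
The plan is to start from the basic split bounds of Lemma \ref{lem:G-split-basic} and convert the factors \(e^{-s^2}\) and \(sh\) into the \(x\)- and \(\beta\)-scales, using Lemma \ref{lem:quantile-scale} and Lemma \ref{lem:quantile-gap}. Throughout, \(0<x\le\sqrt n\), so \(s=s_n(x)\ge c\sqrt{\log n}\to\infty\) uniformly. This makes the ``sufficiently large \(s\)'' hypothesis of Lemma \ref{lem:G-split-basic} hold for all large \(n\), and by \eqref{eq:sn-exp-upper} it gives \(e^{-s^2}\le Cx^2s^2/n^2\).

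For part (i), I would take \(y=x(1+\beta/s^2)\) with \(0\le\beta\le\eta s^2\). Lemma \ref{lem:quantile-gap}(ii) gives \(sh\ge c_\eta\beta/s^2\), hence \(s\sqrt{sh}\ge \sqrt{c_\eta}\sqrt\beta\). Substituting into \eqref{eq:G-basic-plus} gives
\[
G_+\le C\frac{e^{-s^2}}{s}e^{-\sqrt{2c_\eta\beta}}\le C\frac{x^2s}{n^2}e^{-c\sqrt\beta}.
\]
For \(G_-\), \eqref{eq:G-basic-minus} with \(sh\ge0\) gives
\[
G_-\le C\frac{x^2}{n^2}e^{-s^2/8}.
\]
Because \(\beta\le\eta s^2\le s^2\), we have \(e^{-s^2/8}\le e^{-s^2/16}e^{-\sqrt\beta/16}\), so this term is also dominated by the right side of \eqref{eq:7.16}. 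Summing the two bounds proves (i).

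For part (ii), set \(a=\log(y/x)\ge\log(1+\eta)\). Lemma \ref{lem:quantile-gap}(iii) gives \(sh\ge c_0a\) with \(c_0=3/5\). In \eqref{eq:G-basic-plus} this yields \(\sqrt2\,s\sqrt{sh}\ge\sqrt{2c_0}\,s\sqrt a=\sqrt{6/5}\,s\sqrt a\), so \eqref{eq:7.17-plus} holds for every \(c_+\le\sqrt{6/5}\), in particular on the stated interval. In \eqref{eq:G-basic-minus}, \(e^{-2sh}\le e^{-(6/5)a}=(x/y)^{6/5}\), which gives \eqref{eq:7.17-minus} after the factor \(1/s^2\le s\) is absorbed.

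For the combined bound \eqref{eq:7.17}, fix \(c\in(1/\sqrt2,\sqrt{6/5})\); I only need to show that the \(G_-\) bound is at most \(C\exp\{-cs\sqrt a\}\). Since \(y<n\), we have \(a\le\log(n/x)=s^2/2+O(\log s)\) by \eqref{eq:sn-log-refined}. I would split into two cases. If \(\sqrt a\le s/(8c)\), then \(e^{-s^2/8}\le e^{-cs\sqrt a}\) directly. Otherwise \(a\ge s^2/(64c^2)\), and I would combine both decay sources by proving
\[
\frac{s^2}{8}+\frac65a\ge cs\sqrt a .
\]
Writing \(r=\sqrt a/s\le 1/\sqrt2+o(1)\), this becomes \(1/8+(6/5)r^2\ge cr\). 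By AM--GM, the left side is at least \(2\sqrt{(1/8)(6/5)}\,r=\sqrt{3/5}\,r\approx0.775\,r\), which exceeds \(r/\sqrt2\approx0.707\,r\). So the inequality holds for every \(c\in(1/\sqrt2,\sqrt{3/5}]\), and in fact for all \(r\), making the case split unnecessary. Choosing such a \(c\), which is also a valid \(c_+\), gives \eqref{eq:7.17}.

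The main obstacle is this numerical constant bookkeeping. The claim \(c>1/\sqrt2\) relies precisely on \(c_0>1/2\) from Lemma \ref{lem:quantile-gap}(iii) together with the \(e^{-s^2/8}\) factor, and one must check that the AM--GM constant \(\sqrt{4c_0/8}=\sqrt{c_0/2}\cdot\sqrt2\), which equals \(\sqrt{3/5}\) here, indeed exceeds \(1/\sqrt2\). The remaining steps are direct substitutions.
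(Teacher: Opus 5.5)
Your proposal is correct and follows the paper's proof essentially step for step: you insert Lemma~\ref{lem:quantile-gap}(ii) and (iii) into the split bounds of Lemma~\ref{lem:G-split-basic}, convert $e^{-s^2}$ via \eqref{eq:sn-exp-upper}, and use the same AM--GM inequality $s^2/8+\tfrac65a\ge\sqrt{3/5}\,s\sqrt a$ for the combined bound. There is one cosmetic slip in your last paragraph: the AM--GM constant is $2\sqrt{(1/8)(2c_0)}=\sqrt{c_0}$, so the middle expression $\sqrt{4c_0/8}$ is off by a factor $\sqrt2$, but the value $\sqrt{3/5}$ you use and the conclusion are right.
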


\begin{proof}
We use the basic split bounds in Lemma \ref{lem:G-split-basic} together with the quantile-gap estimates.

For part \textup{(i)}, set \(v=x(1+\beta/s^2)/n\). Lemma \ref{lem:quantile-gap}\textup{(ii)} gives \(h\ge c_1\beta/s^3\). The \(G_+\) estimate \eqref{eq:G-basic-plus} therefore gives
\[
G_+\left(\frac xn,v\right)
\le C\frac{e^{-s^2}}s e^{-c_2\sqrt\beta}.
\]
For \(G_-\), \eqref{eq:G-basic-minus} gives
\[
G_-\left(\frac xn,v\right)
\le C\frac{e^{-s^2}}{s^2}e^{-s^2/8}.
\]
Since \(0<x\le\sqrt n\) implies \(s\to\infty\) uniformly, and since \(0\le\beta\le\eta s^2\), the factor \(e^{-s^2/8}\) is bounded by \(C_\eta e^{-c_3\sqrt\beta}\) after decreasing \(c_3\). Applying \eqref{eq:sn-exp-upper} to both estimates and adding them proves \eqref{eq:7.16}.

For part \textup{(ii)}, put \(a=\log(y/x)\). Lemma \ref{lem:quantile-gap}\textup{(iii)} gives \(sh\ge c_0a\) with \(c_0=3/5\). The \(G_+\) bound \eqref{eq:G-basic-plus} yields
\[
G_+\left(\frac xn,\frac yn\right)
\le C\frac{e^{-s^2}}s\exp\{-\sqrt{2c_0}\,s\sqrt a\}.
\]
Because \(\sqrt{2c_0}=\sqrt{6/5}>1/\sqrt2\), \eqref{eq:sn-exp-upper} proves \eqref{eq:7.17-plus} for any fixed \(c_+\in(1/\sqrt2,\sqrt{6/5})\).

For \(G_-\), \eqref{eq:G-basic-minus}, \eqref{eq:sn-exp-upper}, and \(sh\ge(3/5)a\) give
\[
G_-\left(\frac xn,\frac yn\right)
\le C\frac{x^2s}{n^2}e^{-s^2/8}e^{-(6/5)a}
= C\frac{x^2s}{n^2}e^{-s^2/8}\left(\frac xy\right)^{6/5},
\]
which is \eqref{eq:7.17-minus}.

It remains only to convert the displayed \(G_-\) bound into the same strong exponential form as the \(G_+\) bound. The elementary arithmetic-geometric mean inequality
\[
\frac{s^2}{8}+\frac65a
\ge \sqrt{\frac35}\,s\sqrt a,
\qquad a>0,
\]
shows that \eqref{eq:7.17-minus} also implies a bound of the form \(C x^2s n^{-2}\exp\{-c s\sqrt a\}\) with some \(c>1/\sqrt2\). Together with the already proved \(G_+\) estimate \eqref{eq:7.17-plus}, this proves the full bound \eqref{eq:7.17}.
\end{proof}

\begin{lemma}[Far-ratio exponential integral]\label{lem:far-exp-integral}
Fix \(\eta\in(0,1]\) and \(c_+>1/\sqrt2\). There exist constants \(C,c,\rho>0\) such that, uniformly for \(0<x\le\sqrt n\), with \(s=s_n(x)\),
\begin{equation}
\int_{(1+\eta)x}^{n}
\exp\{-c_+s\sqrt{\log(y/x)}\}\,\dd y
\le Cx\ell_n(x)\left(e^{-cs}+\left(\frac{x}{n}\right)^\rho\right).
\label{eq:far-exp-integral}
\end{equation}
\end{lemma}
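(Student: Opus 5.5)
The plan is to change variables \(y=xe^{a}\), so that \(\dd y=xe^{a}\dd a\), and write the left side of \eqref{eq:far-exp-integral} as
\[
x\int_{a_\eta}^{\log(n/x)} \exp\{a-c_+s\sqrt a\}\,\dd a,\qquad a_\eta:=\log(1+\eta).
\]
The exponent \(g(a):=a-c_+s\sqrt a\) is convex in \(\sqrt a\). It is decreasing for \(\sqrt a<c_+s/2\) and increasing afterwards. Hence the integrand is largest at one of the two endpoints \(a=a_\eta\) or \(a=A:=\log(n/x)\). The length of the range is \(A\le\ell_n(x)\), so the integral is at most
\[
x\,\ell_n(x)\bigl(e^{g(a_\eta)}+e^{g(A)}\bigr).
\]

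At the lower endpoint,
\[
e^{g(a_\eta)}\le(1+\eta)\,e^{-c_+\sqrt{a_\eta}\,s},
\]
which gives the \(e^{-cs}\) term with \(c=c_+\sqrt{a_\eta}\). The constant \(1+\eta\) is absorbed into \(C\).

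At the upper endpoint, use \eqref{eq:sn-log-refined}, valid uniformly for \(0<x\le\sqrt n\):
\[
A=\frac{s^2}{2}+O(\log s).
\]
Then \(\sqrt A=\frac{s}{\sqrt2}\,(1+o(1))\), so
\[
c_+s\sqrt A=\sqrt2\,c_+\,A\,(1+o(1)).
\]
Since \(c_+>1/\sqrt2\), we have \(\sqrt2\,c_+>1\). Fix \(\rho\) with \(0<\rho<\sqrt2\,c_+-1\). Then for all large \(n\), uniformly in \(0<x\le\sqrt n\),
\[
g(A)\le A-(1+\rho)A=-\rho A.
\]
This gives \(e^{g(A)}\le (x/n)^\rho\), the second term. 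The uniformity holds because \(s\ge c\sqrt{\log n}\to\infty\) on this range, so the \(o(1)\) errors are uniform. Finitely many small \(n\) are covered by enlarging \(C\).

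The main obstacle is justifying the endpoint-maximum claim and the upper-endpoint comparison uniformly, in particular making the \(O(\log s)\) correction in \(A\) negligible against the margin \(\sqrt2\,c_+-1\). The margin is a fixed positive number, so the plan is to absorb the correction into it, using \(\log s/s^2\to0\).

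If \(A<a_\eta\), the range of integration is empty and the bound is trivial. This happens only when \(y\) is forced beyond \(n\).
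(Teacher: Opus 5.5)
Your proposal is correct and follows essentially the same route as the paper. Both arguments use a logarithmic change of variables and the endpoint maximum of the convex exponent; the lower endpoint gives the $e^{-cs}$ term, and at the upper endpoint \eqref{eq:sn-log-refined} gives $e^{-\rho\log(n/x)}=(x/n)^\rho$. The only difference is cosmetic: you substitute $a=\log(y/x)$ where the paper uses $w=\sqrt{\log(y/x)}$, so the Jacobian is absorbed into the exponent and the extra factor $w$, which the paper bounds separately, never appears.
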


\begin{proof}
Put \(L=L_n(x)=\log(n/x)\) and \(w=\sqrt{\log(y/x)}\). Then \(0<x\le\sqrt n\) implies \(L\ge(1/2)\log n\), and
\[
y=xe^{w^2},\qquad \dd y=2xwe^{w^2}\,\dd w.
\]
The lower and upper limits are
\[
w_0=\sqrt{\log(1+\eta)},\qquad w_1=\sqrt L.
\]
Therefore the integral on the left-hand side of \eqref{eq:far-exp-integral} equals
\[
2x\int_{w_0}^{w_1}w\exp\{w^2-c_+sw\}\,\dd w.
\]
The exponent \(w^2-c_+sw\) is convex, so its maximum on \([w_0,w_1]\) occurs at an endpoint. At the lower endpoint,
\[
w_0^2-c_+sw_0\le -c_1s
\]
for some \(c_1>0\). At the upper endpoint, \eqref{eq:sn-log-refined} gives \(s\ge(1-o(1))\sqrt{2L}\) uniformly for \(0<x\le\sqrt n\). Since \(c_+\sqrt2>1\), there is \(\rho>0\) such that, for all large \(n\),
\[
w_1^2-c_+sw_1=L-c_+s\sqrt L\le -\rho L.
\]
Since \(w\le w_1=\sqrt L\) and the interval length is at most \(\sqrt L\), the factor \(w\) in the integrand contributes at most \(CL\) after taking the endpoint bound. Thus
\[
2x\int_{w_0}^{w_1}w\exp\{w^2-c_+sw\}\,\dd w
\le CxL\{e^{-c_1s}+e^{-\rho L}\}.
\]
Because \(e^{-\rho L}=(x/n)^\rho\) and \(L\le \ell_n(x)\), this proves \eqref{eq:far-exp-integral}.
\end{proof}

The next lemma integrates the far-ratio estimates for \(G_+\) and \(G_-\) in the second order-statistic variable. Later proofs use only this integrated form.

\begin{lemma}[Integrated far-ratio bound for \(G\)]\label{lem:G-far-integrated}
Fix \(\eta\in(0,1]\). There exist constants \(C,c,\rho>0\) such that, for all sufficiently large \(n\), if \(0<x\le\sqrt n\) and \(s=s_n(x)\), then
\begin{equation}\label{eq:G-far-integrated}
\int_{(1+\eta)x}^n G\left(\frac xn,\frac yn\right)\dd y
\le C\frac{x^3s}{n^2}
\left[\ell_n(x)\left(e^{-cs}+\left(\frac{x}{n}\right)^\rho\right)+e^{-s^2/8}\right].
\end{equation}
\end{lemma}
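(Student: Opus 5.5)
The plan is to integrate the two far-ratio bounds of Lemma \ref{lem:G-global-bounds}\textup{(ii)} separately over \(y\in[(1+\eta)x,n)\), using the split \(G=G_++G_-\). Throughout, \(0<x\le\sqrt n\) and \(s=s_n(x)\). Both bounds carry the common prefactor \(Cx^2s/n^2\), which does not depend on \(y\). It therefore suffices to show that the \(y\)-integral of the remaining factor is at most \(Cx\) times the bracket in \eqref{eq:G-far-integrated}.

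For \(G_+\), integrate \eqref{eq:7.17-plus} in \(y\) with a fixed \(c_+\in(1/\sqrt2,\sqrt{6/5})\). Lemma \ref{lem:far-exp-integral} applies directly with this \(c_+\). It gives
\[
\int_{(1+\eta)x}^n\exp\{-c_+s\sqrt{\log(y/x)}\}\,\dd y\le Cx\ell_n(x)\left(e^{-cs}+\left(\frac xn\right)^\rho\right).
\]
Multiplying by \(Cx^2s/n^2\) produces exactly the first term of the bracket in \eqref{eq:G-far-integrated}.

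For \(G_-\), integrate \eqref{eq:7.17-minus}. The factor \(e^{-s^2/8}\) does not depend on \(y\), and since \(6/5>1\) the remaining integral converges:
\[
\int_{(1+\eta)x}^{\infty}\left(\frac xy\right)^{6/5}\dd y=x\int_{1+\eta}^\infty r^{-6/5}\,\dd r\le 5x.
\]
This contributes \(Cx^3sn^{-2}e^{-s^2/8}\), which is the second term of the bracket.

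Adding the two contributions proves \eqref{eq:G-far-integrated}. The constants \(C\) and \(c\) are the maxima and minima, respectively, of those from the two lemmas, and \(\rho\) is the one from Lemma \ref{lem:far-exp-integral}. The only point needing care is that \(G_-\) must be integrated separately rather than through the combined bound \eqref{eq:7.17}. With the combined form one would need \(c>1/\sqrt2\) in Lemma \ref{lem:far-exp-integral}, and the AM--GM constant \(\sqrt{3/5}\) is below \(1/\sqrt2\). Keeping the \(G_-\) bound in its polynomial form \((x/y)^{6/5}\) avoids this issue, because the exponent \(6/5>1\) makes the integral trivially convergent. I expect no other obstacle.
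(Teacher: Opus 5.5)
Your proof is correct and is essentially the paper's argument: integrate \eqref{eq:7.17-plus} with Lemma \ref{lem:far-exp-integral}, and integrate \eqref{eq:7.17-minus} using the convergent integral of \((x/y)^{6/5}\). Your closing remark is wrong, though: \(\sqrt{3/5}\approx0.775\) exceeds \(1/\sqrt2\approx0.707\), so the combined bound \eqref{eq:7.17} would also feed directly into Lemma \ref{lem:far-exp-integral}, and treating \(G_-\) separately is a convenience rather than a necessity.
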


\begin{proof}
Split the integral into its \(G_+\) and \(G_-\) parts. For \(G_+\), Lemma \ref{lem:G-global-bounds}\textup{(ii)} gives, for some \(c_+>1/\sqrt2\),
\[
G_+\left(\frac xn,\frac yn\right)
\le C\frac{x^2s}{n^2}\exp\{-c_+s\sqrt{\log(y/x)}\}.
\]
Lemma \ref{lem:far-exp-integral} therefore bounds the integrated \(G_+\) contribution by
\[
C\frac{x^3s\ell_n(x)}{n^2}
\left(e^{-cs}+\left(\frac{x}{n}\right)^\rho\right).
\]
For \(G_-\), Lemma \ref{lem:G-global-bounds}\textup{(ii)} gives
\[
G_-\left(\frac xn,\frac yn\right)
\le C\frac{x^2s}{n^2}e^{-s^2/8}\left(\frac{x}{y}\right)^{6/5}.
\]
Since
\[
\int_{(1+\eta)x}^{\infty}\left(\frac{x}{y}\right)^{6/5}\dd y
= x\int_{1+\eta}^{\infty}r^{-6/5}\dd r\le Cx,
\]
the integrated \(G_-\) contribution is at most \(C x^3s n^{-2}e^{-s^2/8}\). Adding the two bounds proves \eqref{eq:G-far-integrated}.
\end{proof}

\begin{theorem}[Asymptotics of the averaged function for \(A_n\)]\label{thm:I-A}
As \(n\to\infty\),
\begin{equation}
I_n=(n-2)\E G(U_{(1)},U_{(2)})
=\left(3\sqrt{\frac\pi2}+o(1)\right)\frac1{n\sqrt{\log n}}.
\label{eq:7.18}
\end{equation}
\end{theorem}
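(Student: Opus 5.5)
The proof will evaluate \(I_n\) through the double-integral representation \eqref{eq:7.8}, rescaled to the boundary layer \eqref{eq:7.11}. Substitute \(u=x/n\) and \(v=y/n\). Then
\[
I_n=\frac{(n-1)(n-2)}{n}\int_0^n\!\!\int_0^y\Bigl(1-\frac yn\Bigr)^{n-2}G\Bigl(\frac xn,\frac yn\Bigr)\dd x\dd y .
\]
In the near region, write \(y=x(1+\beta/s^2)\) with \(s=s_n(x)\). Then \(\dd y=(x/s^2)\dd\beta\) for fixed \(x\).

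\textbf{Heuristic for the constant.} By Lemma \ref{lem:G-boundary-layer}, \(G\approx\sqrt\pi\,x^2 s\,e^{-2\sqrt\beta}/n^2\), and \((1-y/n)^{n-2}\approx e^{-x}\). The near-region contribution is therefore approximately
\[
n\int_0^\infty\!\!\int_0^\infty e^{-x}\,\frac{x}{s^2}\,\frac{\sqrt\pi x^2 s}{n^2}\,e^{-2\sqrt\beta}\dd\beta\dd x
=\frac{\sqrt\pi}{n s}\cdot\Gamma(4)\cdot\int_0^\infty e^{-2\sqrt\beta}\dd\beta .
\]
The \(\beta\)-integral equals \(1/2\), since \(\beta=r^2\) gives \(2\int_0^\infty re^{-2r}\dd r=1/2\). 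Using \(s\sim\sqrt{2\log n}\) from \eqref{eq:sn-log}, this becomes \(3\sqrt\pi/(n\sqrt{2\log n})=3\sqrt{\pi/2}/(n\sqrt{\log n})\), which is \eqref{eq:7.18}.

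\textbf{Step 1: main region.} Fix \(0<\delta<M<\infty\), \(B<\infty\) and \(\eta\in(0,1]\). Multiply the integrand by \(n\sqrt{\log n}\). On \(x\in[\delta,M]\), \(\beta\in[0,B]\), the rescaled integrand converges uniformly by Lemma \ref{lem:G-boundary-layer}, together with \(s_n(x)/\sqrt{2\log n}\to1\) uniformly (Lemma \ref{lem:quantile-scale}) and \((1-y/n)^{n-2}\to e^{-x}\). The part with \(x\in[\delta,M]\) and \(\beta\in[B,\eta s^2]\) is dominated using \eqref{eq:7.16}, which gives the integrable bound \(Cx^3e^{-c\sqrt\beta}e^{-x/2}\). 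Letting \(B\to\infty\), \(\delta\to0\) and \(M\to\infty\) recovers \(6\cdot\frac12\sqrt\pi\) times the limit.

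\textbf{Step 2: tails in \(x\).} For \(x<\delta\) or \(M<x\le\sqrt n\) in the near region, \eqref{eq:7.16} gives the bound \(n\cdot(x/s^2)\cdot x^2s\,n^{-2}e^{-c\sqrt\beta}e^{-x/2}\). After integrating in \(\beta\), this is \(Cx^3e^{-x/2}/(ns_n(x))\). Since \(s_n(x)\ge c\sqrt{\log n}\) uniformly for \(x\le\sqrt n\), the rescaled contribution is \(O(\int_{[0,\delta]\cup[M,\infty)}x^3e^{-x/2}\dd x)\), which is small uniformly in \(n\). For \(x>\sqrt n\), I bound \(G(u,v)\le\Psi(u)\), which holds because \(\tau\le Z_{(1)}\). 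I then use \eqref{eq:goe-tail-transfer-upper} and the factor \((1-y/n)^{n-2}\le e^{-c\sqrt n}\), so this part is exponentially negligible.

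\textbf{Step 3: far region.} For \(y\ge(1+\eta)x\) with \(x\le\sqrt n\), integrate in \(y\) first. I bound \((1-y/n)^{n-2}\le(1-x/n)^{n-2}\le Ce^{-x}\) and apply Lemma \ref{lem:G-far-integrated}. This gives a contribution at most
\[
C\,n^{-1}\int_0^{\sqrt n}x^3s\,e^{-x}\Bigl[\ell_n(x)\bigl(e^{-cs}+(x/n)^\rho\bigr)+e^{-s^2/8}\Bigr]\dd x .
\]
Bounding \(s\le C\sqrt{\ell_n(x)}\), Lemma \ref{lem:outer-integrals} (parts \eqref{eq:outer-int-2}, \eqref{eq:outer-int-3}, \eqref{eq:outer-int-1} with \(\kappa=1/8\)) makes this \(n^{-1}(\log n)^{C}\bigl(e^{-c_2\sqrt{\log n}}+n^{-\rho}+n^{-1/4}\bigr)\). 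That is \(o(1/(n\sqrt{\log n}))\). The prefactor \((n-1)(n-2)/n^2\to1\) is harmless.

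\textbf{Main obstacle.} The delicate point is the uniformity of the domination in Step 2 and in the interface between Steps 1 and 3. The scale \(s=s_n(x)\) varies with \(x\), so the change of variables \(y\leftrightarrow\beta\) depends on \(x\), and the near region \(\beta\le\eta s^2\) must exactly cover \(y\in[x,(1+\eta)x]\). I will check that region bookkeeping explicitly. I will also use the envelope \(\ell_n\) together with \(s_n(x)^2\asymp\ell_n(x)\) on \(0<x\le\sqrt n\), so that every bound is uniform in \(x\) rather than only on compacts.
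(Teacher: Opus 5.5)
Your proposal is correct and follows the paper's proof essentially step for step. It uses the same rescaling of \eqref{eq:7.8} and the same exponential disposal of \(x>\sqrt n\). It splits near and far regions at \(y=(1+\eta)x\). In the near region it substitutes \(y=x(1+\beta/s^2)\), applies Lemma~\ref{lem:G-boundary-layer} on compact rectangles, and takes domination from Lemma~\ref{lem:G-global-bounds}(i). The far region is handled by Lemmas~\ref{lem:G-far-integrated} and~\ref{lem:outer-integrals}.

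The differences are cosmetic. For \(x>\sqrt n\) you bound \(G\le\Psi(u)\), whereas the paper simply uses \(G\le1\). You also state explicitly the conversion \(s_n(x)\le C\sqrt{\ell_n(x)}\), which the paper leaves as ``harmless powers of \(\log n\)'' when feeding the far-region integrand into Lemma~\ref{lem:outer-integrals}.
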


\begin{proof}
From \eqref{eq:7.8}, after the change of variables \(u=x/n\), \(v=y/n\),
\begin{equation}
I_n=\frac{(n-1)(n-2)}n
\int_0^n\int_x^n\left(1-\frac yn\right)^{n-2}G\left(\frac xn,\frac yn\right)\dd y\dd x.
\label{eq:7.19}
\end{equation}
The region \(x>\sqrt n\) is negligible because \(G\le1\) and the factor \((1-y/n)^{n-2}\) is exponentially small there. On \(0<x\le\sqrt n\), split into a near region \(x\le y\le(1+\eta)x\) and a far-ratio region \(y\ge(1+\eta)x\). Let \(I_{n,\mathrm{far}}\) denote the corresponding part of the integral in \eqref{eq:7.19} over \(0<x\le\sqrt n\) and \((1+\eta)x\le y<n\). Using \((1-y/n)^{n-2}\le Ce^{-c_1y}\le Ce^{-c_1x}\) and Lemma \ref{lem:G-far-integrated},
\begin{align*}
I_{n,\mathrm{far}}
&\le Cn\int_0^{\sqrt n}e^{-c_1x}
\left[\int_{(1+\eta)x}^nG\left(\frac xn,\frac yn\right)\dd y\right]\dd x\\
&\le \frac{C}{n}\int_0^{\sqrt n}x^3s_n(x)e^{-c_1x}
\left[\ell_n(x)\left(e^{-cs_n(x)}+\left(\frac{x}{n}\right)^\rho\right)+e^{-s_n(x)^2/8}\right]\dd x.
\end{align*}
By Lemma \ref{lem:outer-integrals}, the three bracketed terms are respectively smaller than any inverse power of \(\log n\), \(O(n^{-\rho})\), and \(O(n^{-1/4})\), up to harmless powers of \(\log n\). Hence
\[
I_{n,\mathrm{far}}=o\left(\frac1{n\sqrt{\log n}}\right),
\]
so the far-ratio region is negligible for \(I_n\).

In the near region set \(s=s_n(x)\) and
\[
y=x\left(1+\frac\beta{s^2}\right),
\qquad \dd y=\frac{x}{s^2}\,\dd\beta.
\]
For \((x,\beta)\) restricted to a compact rectangle \([\delta,M]\times[0,B]\), Lemma \ref{lem:G-boundary-layer} gives
\[
\frac{n^2}{s}G\left(\frac xn,\frac{x(1+\beta/s^2)}n\right)
\to \sqrt\pi x^2e^{-2\sqrt\beta},
\]
while
\[
\left(1-\frac{x(1+\beta/s^2)}n\right)^{n-2}\to e^{-x},
\qquad
\frac{\sqrt{\log n}}{s}\to\frac1{\sqrt2}.
\]
Including the Jacobian \(\dd y=x s^{-2}\dd\beta\) and the prefactor in \eqref{eq:7.19}, the rescaled integrand, after multiplication by \(n\sqrt{\log n}\), converges on the compact rectangle to
\[
\sqrt{\frac\pi2}\,x^3e^{-x}e^{-2\sqrt\beta}.
\]
This proves the limit over each compact rectangle \(x\in[\delta,M]\), \(\beta\in[0,B]\).  We now pass from compact rectangles to the full integration region.  In the near region \(x\le y\le(1+\eta)x\), the change of variables above gives
\[
0\le \beta\le \eta s^2,
\]
so the upper \(\beta\)-limit grows like \(\log n\).  Lemma \ref{lem:G-global-bounds}\textup{(i)}, the bound \((1-y/n)^{n-2}\le e^{-c_1x}\) for \(0<x\le\sqrt n\), and the Jacobian \(\dd y=x s^{-2}\,\dd\beta\) show that the rescaled integrand in \eqref{eq:7.19}, after multiplication by \(n\sqrt{\log n}\), is bounded by
\[
C x^3 e^{-c_1x}e^{-c_2\sqrt\beta}\dd x\dd\beta.
\]
Here \(c_1,c_2>0\), and the factor \(\sqrt{\log n}/s\) is bounded by Lemma \ref{lem:quantile-scale}. The displayed function is integrable on \((0,\infty)^2\), uniformly in \(n\). This gives the standard truncate-then-dominated-convergence argument: for any \(\varepsilon>0\), choose \(\delta,M,B\) so that the integral of the displayed bound over \(\{x<\delta\}\cup\{x>M\}\cup\{\beta>B\}\) is below \(\varepsilon\), and then apply the dominated convergence theorem on \([\delta,M]\times[0,B]\). In particular, the growing near-region interval \(0\le\beta\le\eta s^2\) causes no difficulty because the tail \(\beta>B\) is controlled by \(e^{-c_2\sqrt\beta}\). The complementary far-ratio region was shown negligible above. Therefore
\[
\lim_{n\to\infty}n\sqrt{\log n}\,I_n
=\sqrt{\frac\pi2}
\int_0^\infty x^3e^{-x}\dd x
\int_0^\infty e^{-2\sqrt\beta}\dd\beta.
\]
Finally,
\[
\int_0^\infty x^3e^{-x}\dd x=6,
\qquad
\int_0^\infty e^{-2\sqrt\beta}\dd\beta=\frac12.
\]
Thus the limit equals \(3\sqrt{\pi/2}\).
\end{proof}

Combining Lemma \ref{lem:A-linearization}, Proposition \ref{prop:G-representation}, and Theorem \ref{thm:I-A} yields the desired asymptotic formula for \(A_n\).

\begin{corollary}[Asymptotics of \(A_n\)]\label{cor:A-asympt}
\begin{equation}
A_n=\left(3\sqrt{\frac\pi2}+o(1)\right)\frac1{n\sqrt{\log n}}.
\label{eq:7.20}
\end{equation}
\end{corollary}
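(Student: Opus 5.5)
The corollary follows by chaining three results already proved; no new estimates are needed.

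First, Lemma \ref{lem:A-linearization} gives
\[
A_n=\E S_{2,n}+O\left(\frac{\log n}{n^2}\right).
\]
Second, Proposition \ref{prop:G-representation} identifies the main term exactly:
\[
\E S_{2,n}=I_n=(n-2)\E G(U_{(1)},U_{(2)}).
\]
Third, Theorem \ref{thm:I-A} evaluates this average:
\[
I_n=\left(3\sqrt{\pi/2}+o(1)\right)\frac{1}{n\sqrt{\log n}}.
\]

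Substituting the second and third displays into the first, it remains to absorb the linearization error into the \(o(1)\) term. Multiplying that error by \(n\sqrt{\log n}\) gives
\[
O\left(\frac{(\log n)^{3/2}}{n}\right)\longrightarrow 0 .
\]
Hence
\[
A_n=\left(3\sqrt{\pi/2}+o(1)\right)\frac{1}{n\sqrt{\log n}},
\]
which is \eqref{eq:7.20}.

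There is no real obstacle here, since all the analytic work (the boundary-layer limit, domination, and the far-ratio bounds) is contained in Theorem \ref{thm:I-A}. The only point to check is that the error scale \(\log n/n^2\) from Lemma \ref{lem:A-linearization} is genuinely smaller than the target scale \(1/(n\sqrt{\log n})\), and the computation above confirms it is smaller by the factor \((\log n)^{3/2}/n\).
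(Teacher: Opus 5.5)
Your proposal is correct and follows the paper's own proof: the paper also obtains the corollary by combining Lemma~\ref{lem:A-linearization}, Proposition~\ref{prop:G-representation}, and Theorem~\ref{thm:I-A}. Your explicit check that the linearization error \(O(\log n/n^2)\) is \(o\bigl(1/(n\sqrt{\log n})\bigr)\) is the only additional step, and it is right.
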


\begin{remark}[No finite block carries \(A_n\)]
For every fixed integer \(M\ge1\),
\[
\sum_{j=3}^{M+2}\E p_{2j}
\le M\E q_n
=O\left(\frac{\sqrt{\log n}}{n^2}\right)
=o\left(\frac1{n\sqrt{\log n}}\right).
\]
Thus \(A_n\) is not produced by finitely many remaining diagonal order indices. Its mass is spread over the boundary layer captured by the averaged edge-probability function \(G\).
\end{remark}

\section{The term \texorpdfstring{\(B_n\)}{Bn}}\label{sec:B}

This section proves
\begin{equation}
B_n\sim 9\sqrt{\frac\pi2}\frac1{n(\log n)^{3/2}}.
\label{eq:8.1}
\end{equation}
Set
\[
S_{3,n}:=\sum_{3\le i<j\le n}p_{ij}.
\]
We first evaluate the linear intensity \(\E S_{3,n}\). We then use Lemma \ref{lem:product-linearization} to show that the higher-order terms in the product defining \(B_n\) are negligible on the scale of \(B_n\).

\begin{lemma}[Monotonicity of the pair probability]
For fixed \(u\in(0,1)\), the map \((v,w)\mapsto p(u,v,w)\) is nonincreasing in each coordinate on \(u<v<w<1\). In particular, if \(u<v\le x\le y<1\), then
\[
p(u,x,y)\le p(u,v,y)\le p(u,v,v)\le \Psi(u).
\]
\end{lemma}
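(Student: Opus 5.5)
The proof is a direct monotonicity computation on the explicit formula \eqref{eq:7.5}. Write \(a=\Phi^{-1}(u)\), \(b=\Phi^{-1}(v)\), \(c=\Phi^{-1}(w)\). Since \(\Phi^{-1}\) is strictly increasing, the hypothesis \(u<v<w\) gives \(a<b<c\), so both gaps \(b-a\) and \(c-a\) are positive. We have
\[
p(u,v,w)=\Phi\bigl(\sqrt2\,[a-g(b,c)]\bigr),\qquad g(b,c):=\sqrt{(b-a)(c-a)}.
\]
The function \(g\) is nondecreasing in \(b\) for fixed \(c\), and in \(c\) for fixed \(b\), on the region \(b,c\ge a\). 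Because \(\Phi\) is increasing and \(\sqrt2>0\), the map \(p\) is nonincreasing in \(b\) and in \(c\). Composing with the increasing maps \(v\mapsto b\) and \(w\mapsto c\) shows that \(p(u,\cdot,\cdot)\) is nonincreasing in each of \(v\) and \(w\).

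For the chain of inequalities, suppose \(u<v\le x\le y<1\).

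\emph{First inequality.} Fix the third argument at \(y\). Monotonicity in the second argument, applied to \(v\le x\), gives \(p(u,x,y)\le p(u,v,y)\).

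\emph{Second inequality.} Fix the second argument at \(v\). Monotonicity in the third argument, applied to \(v\le y\), gives \(p(u,v,y)\le p(u,v,v)\). Here \(p(u,v,v)\) is the continuous extension to the diagonal \(w=v\), as the paper stipulates. It can be read off directly from the formula, since \(g(b,b)=b-a\) is well defined; equivalently, it is the limit of \(p(u,v,w)\) as \(w\downarrow v\). The monotonicity extends to this boundary value by continuity.

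\emph{Third inequality.} Since \(g\ge0\), we get \(p(u,v,v)\le\Phi(\sqrt2\,a)=\Phi(\sqrt2\,\Phi^{-1}(u))=\Psi(u)\), which is the claimed bound by Corollary \ref{cor:goe-tail-transfer}'s definition of \(\Psi\). Equality holds exactly when \(v=u\).

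The only point needing care is the domain convention, specifically the boundary \(w=v\) and the case \(v=x\). In both, the inequalities are non-strict and follow from continuity of \(g\) on \(\{b,c\ge a\}\). There is no real obstacle here; the lemma is elementary once it is phrased through \(g\).
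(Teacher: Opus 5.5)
Your proof is correct and follows essentially the same route as the paper. Both observe that $\sqrt{(\Phi^{-1}(v)-\Phi^{-1}(u))(\Phi^{-1}(w)-\Phi^{-1}(u))}$ is nondecreasing in $v$ and in $w$, so the argument of $\Phi$ decreases; the final bound by $\Psi(u)$ comes from this square-root term being nonnegative, which is equivalent to the paper's limiting value $v=w=u$.
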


\begin{proof}
The argument of \(\Phi\) in \eqref{eq:7.5} decreases when either \(v\) or \(w\) increases, because \(\Phi^{-1}\) is increasing and the square-root term increases. The final inequality follows by taking the limiting value \(v=w=u\).
\end{proof}

\subsection{Linear intensity of the remaining pairs}

Define the two-sample averaged edge-probability function
\begin{equation}
H(u,v):=\frac{2}{(1-v)^2}\int_v^1\int_w^1 p(u,w,z)\dd z\dd w,
\qquad 0<u<v<1.
\label{eq:8.2}
\end{equation}
At boundary-layer points with \(v=u\), this notation denotes the continuous
extension from \(v>u\).

Conditioned on \((U_{(1)},U_{(2)})=(u,v)\), the remaining \(n-2\) uniforms are i.i.d. on \((v,1)\), and hence
\begin{equation}
\E(S_{3,n}\mid U_{(1)}=u,U_{(2)}=v)
=\binom{n-2}{2}H(u,v).
\label{eq:8.3}
\end{equation}
Moreover,
\begin{equation}
H(u,v)=\frac{2}{(1-v)^2}\int_v^1(1-w)G(u,w)\dd w.
\label{eq:8.4}
\end{equation}

\begin{lemma}[Bounds for the two-sample average \(H\)]\label{lem:H-bound}
Fix \(\eta\in(0,1]\). There exist constants \(C,c,\rho>0\) such that the following bounds hold for all sufficiently large \(n\). Let \(s=s_n(x)\).
\begin{enumerate}[label=(\roman*)]
\item If \(0<x\le\sqrt n\), \(0\le\beta\le\eta s^2\), and
\[
u=\frac xn,\qquad v=\frac{x(1+\beta/s^2)}n,
\]
then, with \(r(u,v):=p(u,v,v)\),
\begin{equation}\label{eq:H-near-bound}
r(u,v)\le C\frac{x^2s}{n^2},
\qquad
H(u,v)\le C\frac{x^3}{n^3s}e^{-c\sqrt\beta}.
\end{equation}
\item If \(0<x\le\sqrt n\), \((1+\eta)x\le y\le n/2\), \(u=x/n\), and \(v=y/n\), then, with \(\ell_n\) as in Lemma \ref{lem:quantile-scale},
\begin{equation}\label{eq:H-far-bound}
H(u,v)
\le C\frac{x^3s\ell_n(x)}{n^3}\left(e^{-cs}+\left(\frac{x}{n}\right)^\rho\right)
   +C\frac{x^3s}{n^3}e^{-s^2/8}.
\end{equation}
\end{enumerate}
\end{lemma}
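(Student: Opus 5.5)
The plan is to work throughout from the one-sample representation \eqref{eq:8.4}, so that every estimate for \(H\) reduces to estimates for \(G\) that are already available. Substituting \(w=y'/n\) and using \(1-w\le1\) gives
\[
H\left(\frac xn,v\right)\le\frac{2}{n(1-v)^2}\int_{nv}^{n}G\left(\frac xn,\frac{y'}n\right)\dd y'.
\]
In both parts \(v\) is bounded away from \(1\). In (i) we have \(v\le(1+\eta)x/n\le 2/\sqrt n\), and in (ii) we have \(v\le1/2\). Hence the prefactor \((1-v)^{-2}\) is at most \(4\).

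\textbf{Bound on \(r\) in part (i).} By the monotonicity lemma, \(r(u,v)=p(u,v,v)\le\Psi(u)\). By \eqref{eq:goe-tail-transfer-upper},
\[
\Psi(u)\le Cu^2\sqrt{\log(1/u)}=C\frac{x^2}{n^2}\sqrt{L_n(x)}.
\]
Since \(x\le\sqrt n\), \eqref{eq:sn-log-refined} gives \(L_n(x)\le Cs^2\), and therefore \(r(u,v)\le Cx^2s/n^2\).

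\textbf{Bound on \(H\) in part (i).} Split the \(y'\)-integral into a near piece \(y'\in[y,(1+\eta)x]\) and a far piece \(y'\ge(1+\eta)x\), where \(y=x(1+\beta/s^2)\).

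For the near piece, write \(y'=x(1+\beta'/s^2)\), so that \(\beta'\in[\beta,\eta s^2]\) and \(\dd y'=xs^{-2}\dd\beta'\). Lemma \ref{lem:G-global-bounds}(i) then bounds the near contribution to \(H\) by
\[
\frac Cn\cdot\frac{x}{s^2}\cdot\frac{x^2s}{n^2}\int_\beta^\infty e^{-c\sqrt{\beta'}}\dd\beta'.
\]
The elementary estimate \(\int_\beta^\infty e^{-c\sqrt{\beta'}}\dd\beta'\le C(1+\sqrt\beta)e^{-c\sqrt\beta}\le C'e^{-(c/2)\sqrt\beta}\) turns this into the claimed \(Cx^3n^{-3}s^{-1}e^{-c\sqrt\beta}\).

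For the far piece, Lemma \ref{lem:G-far-integrated} gives the bound
\[
C\frac{x^3s}{n^3}\left[\ell_n(x)\left(e^{-cs}+\left(\frac xn\right)^\rho\right)+e^{-s^2/8}\right].
\]

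\textbf{Main obstacle: absorbing the far piece in part (i).} The far piece must be absorbed into \(x^3n^{-3}s^{-1}e^{-c'\sqrt\beta}\), which requires a factor \(s^2\) and the exponential in \(\beta\).
\begin{itemize}[label=--]
\item Because \(\beta\le\eta s^2\le s^2\), we have \(e^{-c'\sqrt\beta}\ge e^{-c's}\). It therefore suffices to show that \(s^2\) times the bracket is \(O(e^{-c's})\) for some small \(c'>0\).
\item By \eqref{eq:sn-log-refined}, \(\ell_n(x)\le Cs^2\), so \(s^4e^{-cs}\le Ce^{-(c/2)s}\).
\item Again by \eqref{eq:sn-log-refined}, \((x/n)^\rho=e^{-\rho L_n(x)}\le e^{-\rho s^2/3}\) for large \(n\), so \(s^4(x/n)^\rho\le Ce^{-s}\).
\item Finally, \(s^2e^{-s^2/8}\le Ce^{-s}\).
\end{itemize}
Taking \(c'\le\min(c/2,1)\), and then decreasing the constant \(c\) in \eqref{eq:H-near-bound} to this \(c'\), completes part (i). This absorption step is the one I expect to need the most care. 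The point is that the far terms decay like \(e^{-cs}\) or faster, and \(e^{-cs}\) dominates \(e^{-c\sqrt\beta}\) precisely because of the constraint \(\beta\le\eta s^2\).

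\textbf{Part (ii).} Since \(nv=y\ge(1+\eta)x\), the whole \(y'\)-range lies in the far-ratio region. Using \((1-v)^{-2}\le4\), we get
\[
H\le\frac 8n\int_{(1+\eta)x}^nG\left(\frac xn,\frac{y'}n\right)\dd y'.
\]
Lemma \ref{lem:G-far-integrated} then gives \eqref{eq:H-far-bound} directly.
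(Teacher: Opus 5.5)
Your proposal is correct and follows the same route as the paper. Like the paper, you reduce \eqref{eq:8.4} to a one-dimensional integral of \(G\) with a harmless \((1-v)^{-2}\) prefactor, split at \((1+\eta)x\), and apply Lemma \ref{lem:G-global-bounds}\textup{(i)} near and Lemma \ref{lem:G-far-integrated} far, absorbing the far terms via \(\beta\le\eta s^2\). The differences are minor: you bound \(r\) through \eqref{eq:goe-tail-transfer-upper} and \eqref{eq:sn-log-refined} rather than through Mills' inequality and \eqref{eq:sn-exp-upper}, and you carry out in detail the absorption step that the paper states in one sentence.
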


\begin{proof}
For part \textup{(i)}, monotonicity gives
\[
r(u,v)\le \Psi(u)=\Phi(-\sqrt2s)
\le C\frac{e^{-s^2}}{s}
\le C\frac{x^2s}{n^2},
\]
where the final bound follows from \eqref{eq:sn-exp-upper}.

We next turn \eqref{eq:8.4} into a one-dimensional bound. In part \textup{(i)},
\(v\le (1+\eta)x/n\le (1+\eta)/\sqrt n\) for all sufficiently large \(n\). In part \textup{(ii)}, the hypothesis \(y\le n/2\) gives \(v\le1/2\). Thus the denominator in \eqref{eq:8.4} is uniformly harmless on both ranges:
\((1-v)^{-2}\le C\). Since \(1-w\le1\), after writing \(z=nw\) we have
\begin{equation}\label{eq:H-proof-start}
H(u,v)\le \frac{C}{n}\int_{z_0}^{n}G\left(\frac xn,\frac zn\right)\,\dd z,
\end{equation}
where \(z_0:=nv\).

For part \textup{(i)}, split the integral in \eqref{eq:H-proof-start} at \((1+\eta)x\). In the near part write \(z=x(1+\alpha/s^2)\). Then \(\dd z=x s^{-2}\dd\alpha\), and Lemma \ref{lem:G-global-bounds}\textup{(i)} gives
\[
\frac{C}{n}\int_{\beta}^{\eta s^2}\frac{x^2s}{n^2}e^{-c_1\sqrt\alpha}\frac{x}{s^2}\,\dd\alpha
\le C\frac{x^3}{n^3s}e^{-c_2\sqrt\beta},
\]
where the polynomial factor from the tail integral in \(\alpha\) is absorbed by decreasing \(c_2\).

For the far part in \textup{(i)}, Lemma \ref{lem:G-far-integrated} and \eqref{eq:H-proof-start} give
\[
\frac{C}{n}\int_{(1+\eta)x}^{n}G\left(\frac xn,\frac zn\right)\dd z
\le
C\frac{x^3s}{n^3}
\left[\ell_n(x)\left(e^{-cs}+\left(\frac{x}{n}\right)^\rho\right)+e^{-s^2/8}\right].
\]
Since \(0\le\beta\le\eta s^2\), this far contribution is absorbed by \(C x^3 n^{-3}s^{-1}e^{-c\sqrt\beta}\) after decreasing \(c\): the first term has either exponential decay in \(s\) or a negative power of \(n\), and the last term has the Gaussian tail factor \(e^{-s^2/8}\). This proves part \textup{(i)}.

For part \textup{(ii)}, \eqref{eq:H-proof-start} starts at \(z_0=y\). Since \(y\ge(1+\eta)x\), enlarging the domain to start at \((1+\eta)x\) and applying Lemma \ref{lem:G-far-integrated} gives \eqref{eq:H-far-bound}.
\end{proof}

The next lemma collects the two expectation bounds needed in the second-moment
and prefactor estimates below, which will be used in Lemmas \ref{lem:S3-second} and
\ref{lem:prefactor-inert}.

\begin{lemma}[Mixed bounds]\label{lem:qH-bound}
\begin{equation}\label{eq:qH-bound}
\E\bigl[\Psi(U_{(1)})H(U_{(1)},U_{(2)})\bigr]
=O\left(\frac1{n^5\log n}\right),
\end{equation}
and
\begin{equation}\label{eq:H2-bound}
\E\bigl[H(U_{(1)},U_{(2)})^2\bigr]
=O\left(\frac1{n^6\log n}\right).
\end{equation}
Consequently, since \(r(u,v)\le \Psi(u)\), the same bound as in
\eqref{eq:qH-bound} holds with \(r(U_{(1)},U_{(2)})H(U_{(1)},U_{(2)})\)
in place of \(\Psi(U_{(1)})H(U_{(1)},U_{(2)})\).
\end{lemma}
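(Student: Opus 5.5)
Both bounds are expectations over the joint law of \((U_{(1)},U_{(2)})\), with density \(n(n-1)(1-v)^{n-2}\1_{0<u<v<1}\). The plan is to change variables \(u=x/n\), \(v=y/n\), exactly as in \eqref{eq:7.19}, and reuse the same region decomposition as in the proof of Theorem \ref{thm:I-A}. This gives
\[
\E[\Psi(U_{(1)})H]=\frac{n-1}{n}\int_0^n\int_x^n\Bigl(1-\frac yn\Bigr)^{n-2}\Psi\Bigl(\frac xn\Bigr)H\Bigl(\frac xn,\frac yn\Bigr)\dd y\dd x,
\]
and similarly for \(\E H^2\). The region \(x>\sqrt n\) or \(y>n/2\) contributes \(O(e^{-c\sqrt n})\). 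This uses \(\Psi\le1\) and \(H\le 1\) (the latter holds because \(p\le1\) and the normalized double integral has total mass one), together with \((1-y/n)^{n-2}\le e^{-c_1y}\). On the remaining range I use \((1-y/n)^{n-2}\le Ce^{-c_1x}\) and split at \(y=(1+\eta)x\), as before.

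In the near region I substitute \(y=x(1+\beta/s^2)\), \(\dd y=xs^{-2}\dd\beta\), with \(0\le\beta\le\eta s^2\). Lemma \ref{lem:H-bound}(i) gives \(H\le Cx^3n^{-3}s^{-1}e^{-c\sqrt\beta}\). Corollary \ref{cor:goe-tail-transfer} with \eqref{eq:sn-exp-upper} gives \(\Psi(x/n)\le Cx^2s/n^2\). For the first bound the near contribution is therefore at most
\[
C\int_0^{\sqrt n}\!\!\int_0^{\infty} e^{-c_1x}\frac{x^2s}{n^2}\cdot\frac{x^3}{n^3s}e^{-c\sqrt\beta}\cdot\frac{x}{s^2}\,\dd\beta\dd x
\le \frac{C}{n^5}\int_0^{\sqrt n}\frac{x^6e^{-c_1x}}{s_n(x)^2}\dd x .
\]
Since \(s_n(x)^2\ge c\log n\) uniformly for \(0<x\le\sqrt n\) (Lemma \ref{lem:quantile-scale}), this is \(O(n^{-5}(\log n)^{-1})\). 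For the second bound, the same computation with \(H^2\le Cx^6n^{-6}s^{-2}e^{-2c\sqrt\beta}\) and the Jacobian \(x/s^2\) gives \(O(n^{-6}(\log n)^{-2})\), which is even better than needed.

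In the far region I use Lemma \ref{lem:H-bound}(ii). Its bound does not depend on \(y\), so the \(y\)-integral only produces a factor at most \(C/c_1\) through \(e^{-c_1y}\). For the first bound the far part is then at most
\[
\frac{C}{n^5}\int_0^{\sqrt n}x^5s^2e^{-c_1x}\Bigl[\ell_n(x)\bigl(e^{-cs}+(x/n)^\rho\bigr)+e^{-s^2/8}\Bigr]\dd x .
\]
Lemma \ref{lem:outer-integrals} applies after absorbing \(s^2\le C\ell_n(x)\). The three terms are then \(O((\log n)^{\gamma}e^{-c_2\sqrt{\log n}})\), \(O(n^{-\rho}(\log n)^\gamma)\) and \(O(n^{-1/4}(\log n)^\gamma)\), all \(o(1/\log n)\). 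The \(H^2\) far part is handled identically, squaring the bracket and using the factor \(n^{-6}\).

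The main obstacle is bookkeeping the \(s\)-powers in the near region. The gain of \(1/\log n\) comes entirely from the \(s^{-1}\) in the near bound for \(H\) and the Jacobian \(s^{-2}\), offset by the \(s\) in \(\Psi\). So I must check that Lemma \ref{lem:H-bound}(i) holds uniformly down to \(x\to0\), which it does for \(0<x\le\sqrt n\), and that \(s_n(x)^{-2}\le C/\log n\) there. Finally, the statement about \(rH\) follows at once from \(r(u,v)\le\Psi(u)\), by monotonicity.
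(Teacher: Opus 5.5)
Your proposal is correct and matches the paper's proof essentially step for step. You use the same rescaling $u=x/n$, $v=y/n$ with the near/far-ratio split; in the near region, Lemma~\ref{lem:H-bound}(i), $\Psi(x/n)\le Cx^2s/n^2$, the Jacobian $x/s^2$, and $s^2\ge c\log n$ give $O(n^{-5}(\log n)^{-1})$ and $O(n^{-6}(\log n)^{-2})$; in the far region, Lemma~\ref{lem:H-bound}(ii) combined with Lemma~\ref{lem:outer-integrals} makes it negligible (writing $e^{-c_1y}\le e^{-c_1x/2}e^{-c_1y/2}$ supplies both the $y$-integrability and the $e^{-cx}$ factor, a point the paper also leaves implicit).
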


\begin{proof}
Using the joint density of \((U_{(1)},U_{(2)})\), the first expectation is
\[
\E[\Psi(U_{(1)})H(U_{(1)},U_{(2)})]
=n(n-1)\int_0^1\int_u^1 \Psi(u)H(u,v)(1-v)^{n-2}\dd v\dd u.
\]
The second expectation has the same integral representation with \(H(u,v)^2\) in place of \(\Psi(u)H(u,v)\). We split these integrals according to the same near- and far-ratio regions used above. In the near region write
\[
u=\frac xn,\qquad v=\frac{x(1+\beta/s^2)}n,\qquad s=s_n(x),\qquad 0\le\beta\le\eta s^2.
\]
The joint density of \((U_{(1)},U_{(2)})\) is \(n(n-1)(1-v)^{n-2}\). With the Jacobian
\[
\dd u\,\dd v=\frac{x}{n^2s^2}\,\dd x\,\dd\beta,
\]
and the near-region bound \((1-v)^{n-2}\le Ce^{-c_0x}\), the density times the Jacobian contributes at most
\[
Cx e^{-c_0x}s^{-2}\,\dd x\,\dd\beta.
\]
Lemma \ref{lem:H-bound}\textup{(i)} and Corollary \ref{cor:goe-tail-transfer} give
\[
\Psi(u)\le C\frac{x^2s}{n^2},
\qquad
H(u,v)\le C\frac{x^3}{n^3s}e^{-c\sqrt\beta}.
\]
On the near region \(0<x\le\sqrt n\), Lemma \ref{lem:quantile-scale} gives \(s^2\ge c\log n\). Hence the near contribution to \(\E[\Psi H]\) is bounded by
\[
\frac{C}{n^5\log n}\int_0^{\sqrt n}\int_0^{\eta s^2}
x^6e^{-c_0x}e^{-c\sqrt\beta}\,\dd\beta\,\dd x
\le
\frac{C}{n^5\log n}\int_0^\infty\int_0^\infty
x^6e^{-c_0x}e^{-c\sqrt\beta}\,\dd\beta\,\dd x
=O\left(\frac1{n^5\log n}\right).
\]
Similarly, the near contribution to \(\E[H^2]\) is bounded by
\[
\frac{C}{n^6(\log n)^2}\int_0^\infty\int_0^\infty
x^7e^{-c_0x}e^{-2c\sqrt\beta}\,\dd\beta\,\dd x
=O\left(\frac1{n^6(\log n)^2}\right).
\]

It remains to check the complement, where crude estimates suffice. The region \(x>\sqrt n\) is exponentially negligible. In the far-ratio region \(0<x\le\sqrt n\), \(y\ge(1+\eta)x\), write \(u=x/n\), \(v=y/n\). The part \(y>n/2\) is exponentially small in \(n\). On \(y\le n/2\), the density contributes at most \(e^{-cy}\dd x\dd y\). Using \eqref{eq:H-far-bound} and \(\Psi(x/n)\le Cx^2s/n^2\), the contribution to \(\E[\Psi H]\) from the first term in \eqref{eq:H-far-bound} is bounded by
\[
\frac{C}{n^5}\int_0^{\sqrt n}x^5s^2\ell_n(x)e^{-cx}
\left(e^{-c_1s}+\left(\frac xn\right)^\rho\right)\dd x
=o\left(\frac1{n^5\log n}\right),
\]
and the \(e^{-s^2/8}\) term contributes
\[
\frac{C}{n^5}\int_0^{\sqrt n}x^5s^2e^{-s^2/8}e^{-cx}\,\dd x
=O\left(n^{-5-1/4}(\log n)^{9/8}\right)
=o\left(\frac1{n^5\log n}\right).
\]
The far-ratio contribution to \(\E[H^2]\) is even smaller: squaring \eqref{eq:H-far-bound} gives a factor \(n^{-6}\) times the same exponentially small or negative-power terms. Hence it is \(o(n^{-6}/\log n)\). This proves \eqref{eq:qH-bound} and \eqref{eq:H2-bound}.
\end{proof}

The remaining analysis of \(B_n\) now splits into two tasks: evaluating the linear intensity \(\E S_{3,n}\), and then using Lemma \ref{lem:qH-bound} to show that the product-linearization errors are smaller.

\begin{proposition}[Integral representation for \(\E S_{3,n}\)]\label{prop:S3-integral}
For \(n\ge4\),
\begin{equation}
\E S_{3,n}
=n(n-1)(n-2)
\int_0^1\int_u^1(1-w)
\bigl[(1-u)^{n-3}-(1-w)^{n-3}\bigr]
G(u,w)\dd w\dd u.
\label{eq:8.5}
\end{equation}
Equivalently, after \(u=x/n\), \(w=y/n\),
\begin{equation}
\E S_{3,n}
=\frac{(n-1)(n-2)}n
\int_0^n\int_x^n
\left(1-\frac yn\right)
\left[\left(1-\frac xn\right)^{n-3}-\left(1-\frac yn\right)^{n-3}\right]
G\left(\frac xn,\frac yn\right)\dd y\dd x.
\label{eq:8.6}
\end{equation}
\end{proposition}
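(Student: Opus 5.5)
The plan is to start from the conditional identity \eqref{eq:8.3}, average it against the joint law of \((U_{(1)},U_{(2)})\), substitute the representation \eqref{eq:8.4} of \(H\) through \(G\), and then integrate out the second order-statistic variable by Tonelli's theorem. Everything is nonnegative, since \(p\), \(G\) and \(H\) are probabilities or averages of probabilities, so all interchanges of integration order are justified by Tonelli without any integrability checks.

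First I will justify \eqref{eq:8.3} and \eqref{eq:8.4}, since the text only asserts them. Conditionally on \((U_{(1)},U_{(2)})=(u,v)\), the remaining \(n-2\) uniforms, taken unordered, are i.i.d. uniform on \((v,1)\). For \(3\le i<j\), the quantity \(p_{ij}\) equals \(p(u,U_{(i)},U_{(j)})\). Summing over ordered-index pairs \(i<j\) is the same as summing \(p(u,\min(W_a,W_b),\max(W_a,W_b))\) over the \(\binom{n-2}{2}\) unordered pairs \(\{a,b\}\) of the unordered sample \(W\). Each such term has conditional expectation
\[
\frac{2}{(1-v)^2}\iint_{v<w<z<1}p(u,w,z)\,\dd z\,\dd w=H(u,v),
\]
which gives \eqref{eq:8.3}. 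For \eqref{eq:8.4}, the definition \eqref{eq:7.6} gives \(\int_w^1p(u,w,z)\,\dd z=(1-w)G(u,w)\); inserting this into the inner integral of \eqref{eq:8.2} yields the claim.

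Next I take expectations with the density \(n(n-1)(1-v)^{n-2}\1_{0<u<v<1}\). Using \(2\binom{n-2}{2}=(n-2)(n-3)\), this gives
\[
\E S_{3,n}=n(n-1)(n-2)(n-3)\int_0^1\int_u^1(1-v)^{n-4}\int_v^1(1-w)G(u,w)\,\dd w\,\dd v\,\dd u .
\]
I then swap the \(v\)- and \(w\)-integrals over the region \(u<v<w<1\). The inner integral becomes
\[
\int_u^w(1-v)^{n-4}\,\dd v=\frac{(1-u)^{n-3}-(1-w)^{n-3}}{n-3},
\]
which is valid since \(n\ge4\) makes the exponent \(n-4\ge0\). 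The factor \(n-3\) cancels, and this is exactly \eqref{eq:8.5}. Finally, the substitution \(u=x/n\), \(w=y/n\) has Jacobian \(n^{-2}\); combined with the prefactor \(n(n-1)(n-2)\), it produces the prefactor \((n-1)(n-2)/n\) and hence \eqref{eq:8.6}.

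There is no real obstacle here. The only point needing care is the combinatorial step behind \eqref{eq:8.3}. One must check that \(p_{ij}\) is a function of the ordered pair \((\min,\max)\) of the two unordered sample values, and that the factor \(2/(1-v)^2\) correctly accounts for the ordering. Once that is settled, the rest is a Fubini computation.
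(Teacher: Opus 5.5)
Your proposal is correct and follows the paper's proof essentially step for step: average \eqref{eq:8.3} against the density \(n(n-1)(1-v)^{n-2}\) of \((U_{(1)},U_{(2)})\), substitute \eqref{eq:8.4}, swap the \(v\)- and \(w\)-integrals by Tonelli, evaluate \(\int_u^w(1-v)^{n-4}\dd v\), cancel \(n-3\), and rescale. Your added verification of \eqref{eq:8.3} and \eqref{eq:8.4}, which the paper only asserts, is also correct, including the factor \(2/(1-v)^2\) that accounts for ordering the two unordered sample values.
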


\begin{proof}
The density of \((U_{(1)},U_{(2)})\) is
\[
f_{1,2}(u,v)=n(n-1)(1-v)^{n-2}\1_{\{0<u<v<1\}}.
\]
Using \eqref{eq:8.3} first gives
\[
\E S_{3,n}
=\frac{n(n-1)(n-2)(n-3)}2
\int_0^1\int_u^1(1-v)^{n-2}H(u,v)\dd v\dd u.
\]
Substituting \eqref{eq:8.4} cancels the factor \(1/2\) and yields
\[
\E S_{3,n}
=n(n-1)(n-2)(n-3)
\int_0^1\int_u^1(1-v)^{n-4}
\left[\int_v^1(1-w)G(u,w)\dd w\right]
\dd v\dd u.
\]
The integration domain is \(0<u<v<w<1\). Since the integrand is nonnegative, Tonelli's theorem permits the order of integration to be changed without any prior integrability assumption; see, for example, Billingsley \cite{billingsley}. Thus
\[
\E S_{3,n}
=n(n-1)(n-2)(n-3)
\int_0^1\int_u^1(1-w)G(u,w)
\left[\int_u^w(1-v)^{n-4}\dd v\right]
\dd w\dd u.
\]
The inner integral equals
\[
\int_u^w(1-v)^{n-4}\dd v
=\frac{(1-u)^{n-3}-(1-w)^{n-3}}{n-3}.
\]
After canceling \(n-3\), this is \eqref{eq:8.5}. The rescaling \(u=x/n\), \(w=y/n\) gives \eqref{eq:8.6}.
\end{proof}

\begin{theorem}[Intensity of the remaining pairs]\label{thm:S3-intensity}
\begin{equation}
\E S_{3,n}
=\left(9\sqrt{\frac\pi2}+o(1)\right)
\frac1{n(\log n)^{3/2}}.
\label{eq:8.7}
\end{equation}
\end{theorem}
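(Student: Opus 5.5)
We evaluate \eqref{eq:8.6} exactly as \(I_n\) was evaluated in Theorem \ref{thm:I-A}. The one new feature is the bracket
\[
D_n(x,y):=\left(1-\tfrac xn\right)^{n-3}-\left(1-\tfrac yn\right)^{n-3}.
\]
In the boundary layer this bracket is of size \(y-x=x\beta/s^2\). It therefore supplies one extra factor \(s^{-2}\asymp(\log n)^{-1}\) and one extra factor \(\beta\) relative to the \(A_n\) computation. The outer factor \((1-y/n)\) tends to \(1\) and is bounded by \(1\), so it plays no role. As before, the region \(x>\sqrt n\) is exponentially negligible, because \(G\le1\) and \(D_n\le(1-x/n)^{n-3}\). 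On \(0<x\le\sqrt n\) we split into the near region \(x\le y\le(1+\eta)x\) and the far-ratio region \(y\ge(1+\eta)x\).

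\emph{Far-ratio region.} We use the crude bound \(D_n\le(1-x/n)^{n-3}\le Ce^{-c_1x}\). Lemma \ref{lem:G-far-integrated} then gives the same bound as for \(I_{n,\mathrm{far}}\):
\[
\frac Cn\int_0^{\sqrt n}x^3s_n(x)e^{-c_1x}\Bigl[\ell_n(x)\bigl(e^{-cs_n(x)}+(x/n)^\rho\bigr)+e^{-s_n(x)^2/8}\Bigr]\dd x .
\]
By Lemma \ref{lem:outer-integrals} this is \(O\bigl(n^{-1}(\log n)^{C}(e^{-c_2\sqrt{\log n}}+n^{-\rho}+n^{-1/4})\bigr)\). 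That is \(o\bigl(n^{-1}(\log n)^{-3/2}\bigr)\), because \(e^{-c_2\sqrt{\log n}}\) beats every power of \(\log n\).

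\emph{Near region.} Substitute \(y=x(1+\beta/s^2)\), \(\dd y=xs^{-2}\dd\beta\), with \(0\le\beta\le\eta s^2\).

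\begin{itemize}[label=\textbullet]
\item \textbf{Pointwise limit.} On compact \((x,\beta)\)-rectangles,
\[
D_n=(1-x/n)^{n-3}\Bigl[1-\Bigl(1-\tfrac{x\beta/s^2}{n-x}\Bigr)^{n-3}\Bigr]=e^{-x}\,\frac{x\beta}{s^2}\,(1+o(1)),
\]
since \((n-3)\,x\beta/(s^2(n-x))\to0\) uniformly. Combining this with Lemma \ref{lem:G-boundary-layer} and the prefactor \((n-1)(n-2)/n\sim n\), the integrand times the Jacobian is
\[
\frac{\sqrt\pi}{n s^3}\,x^4e^{-x}\beta e^{-2\sqrt\beta}(1+o(1)).
\]
Since \(s^3\sim(2\log n)^{3/2}\), multiplying by \(n(\log n)^{3/2}\) gives the limit \(\frac{\sqrt\pi}{2\sqrt2}x^4e^{-x}\beta e^{-2\sqrt\beta}\).
\item \textbf{Domination.} By the mean value theorem, \(D_n\le (y-x)(1-x/n)^{n-4}\le Ce^{-c_1x}x\beta/s^2\) on \(x\le\sqrt n\). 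Together with Lemma \ref{lem:G-global-bounds}\textup{(i)} this gives the integrable, \(n\)-uniform majorant \(Cx^4e^{-c_1x}\beta e^{-c\sqrt\beta}\) after rescaling by \(n(\log n)^{3/2}\); here \(s^{-3}(\log n)^{3/2}\) is bounded by Lemma \ref{lem:quantile-scale}.
\item \textbf{Passage to the limit.} The truncate-then-dominated-convergence argument of Theorem \ref{thm:I-A} applies verbatim.
\end{itemize}
The limit is
\[
\frac{\sqrt\pi}{2\sqrt2}\int_0^\infty x^4e^{-x}\dd x\int_0^\infty\beta e^{-2\sqrt\beta}\dd\beta
=\frac{\sqrt\pi}{2\sqrt2}\cdot24\cdot\frac34=9\sqrt{\frac\pi2},
\]
using \(\int_0^\infty\beta e^{-2\sqrt\beta}\dd\beta=2\int_0^\infty t^3e^{-2t}\dd t=\tfrac34\).

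The step I expect to need the most care is the uniform control of \(D_n\). We need the linearization \(D_n\approx e^{-x}(y-x)\) uniformly on compacts, and, more importantly, a majorant that keeps the factor \(\beta/s^2\) throughout the whole near region \(\beta\le\eta s^2\). Without that factor we would only recover the \(A_n\) scale and lose the \((\log n)^{-1}\). The mean-value bound above provides this majorant, and the \(e^{-c\sqrt\beta}\) decay from Lemma \ref{lem:G-global-bounds} absorbs the extra factor \(\beta\).
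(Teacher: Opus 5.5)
Your proposal is correct and follows the paper's proof essentially step for step: the same near/far split at $(1+\eta)x$, the same far-region bound via Lemma~\ref{lem:G-far-integrated} and Lemma~\ref{lem:outer-integrals}, the same mean-value majorant for the bracket (the paper's \eqref{eq:8.10}) combined with Lemma~\ref{lem:G-global-bounds}(i), and the same limiting integrand and constants $\frac{\sqrt\pi}{2\sqrt2}\cdot 24\cdot\frac34$. The only cosmetic difference is that you obtain the pointwise limit of the bracket by factoring out $(1-x/n)^{n-3}$, whereas the paper uses the mean value theorem for that step as well.
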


\begin{proof}
Start from \eqref{eq:8.6}. The region \(x>\sqrt n\) is negligible by the same exponential estimate used in Section 7. Fix \(\eta\in(0,1]\). On \(0<x\le\sqrt n\), split the integral into the near-ratio region \(x\le y\le(1+\eta)x\) and the far-ratio region \(y\ge(1+\eta)x\). In the far-ratio region, the bracketed difference in \eqref{eq:8.6} is at most \((1-x/n)^{n-3}\le Ce^{-c_0x}\). Let \(J_{n,\mathrm{far}}\) denote the corresponding part of \eqref{eq:8.6}. Lemma \ref{lem:G-far-integrated} gives
\begin{align*}
J_{n,\mathrm{far}}
&\le Cn\int_0^{\sqrt n}e^{-c_0x}
\left[\int_{(1+\eta)x}^nG\left(\frac xn,\frac yn\right)\dd y\right]\dd x\\
&\le \frac{C}{n}\int_0^{\sqrt n}x^3s_n(x)e^{-c_0x}
\left[\ell_n(x)\left(e^{-cs_n(x)}+\left(\frac{x}{n}\right)^\rho\right)+e^{-s_n(x)^2/8}\right]\dd x.
\end{align*}
By Lemma \ref{lem:outer-integrals}, the three terms in the bracket are respectively smaller than any inverse power of \(\log n\), \(O(n^{-\rho})\), and \(O(n^{-1/4})\), up to harmless powers of \(\log n\). Hence
\[
J_{n,\mathrm{far}}=o\left(\frac1{n(\log n)^{3/2}}\right),
\]
so the far-ratio region is negligible.

In the near region set \(s=s_n(x)\) and
\begin{equation}
y=x\left(1+\frac\beta{s^2}\right),
\qquad \dd y=\frac{x}{s^2}\,\dd\beta.
\label{eq:8.8}
\end{equation}
The condition \(x\le y\le(1+\eta)x\) is exactly
\[
0\le \beta\le \eta s^2.
\]
For the leading calculation, first restrict to a compact rectangle \(x\in[\delta,M]\), \(0\le\beta\le B\). On this rectangle, \(s^2\sim2\log n\), and the mean-value theorem gives
\begin{align}
&s^2\left[
\left(1-\frac xn\right)^{n-3}
-
\left(1-\frac{x(1+\beta/s^2)}n\right)^{n-3}
\right] \notag\\
&\qquad
=s^2\frac{x\beta}{ns^2}(n-3)
\left(1-\frac{\xi_n}{n}\right)^{n-4}
\longrightarrow x\beta e^{-x},
\label{eq:8.9}
\end{align}
where \(\xi_n\in[x,x(1+\beta/s^2)]\). Together with Lemma \ref{lem:G-boundary-layer} and the Jacobian \(\dd y=x s^{-2}\,\dd\beta\), this gives the pointwise limit of the rescaled integrand in \eqref{eq:8.6}: after multiplying the full integrand by \(n(\log n)^{3/2}\), it converges to
\[
\frac{\sqrt\pi}{2\sqrt2}x^4\beta e^{-x}e^{-2\sqrt\beta}.
\]

It remains to justify extending the compact rectangle to the full near-ratio region. We record the elementary estimate used for this purpose. Since
\[
\left(1-\frac xn\right)^{n-3}
-
\left(1-\frac yn\right)^{n-3}
=\frac{n-3}{n}\int_x^y\left(1-\frac rn\right)^{n-4}\dd r,
\]
and \(x\le r\le y\le(1+\eta)x\le2\sqrt n\), the integrand is bounded by \(Ce^{-c_0x}\). Since the integration interval has length
\[
y-x=\frac{x\beta}{s^2},
\]
there are constants \(C,c_0>0\) such that, for all large \(n\),
\begin{equation}
0\le
\left(1-\frac xn\right)^{n-3}
-
\left(1-\frac{x(1+\beta/s^2)}n\right)^{n-3}
\le C e^{-c_0x}\frac{x\beta}{s^2}.
\label{eq:8.10}
\end{equation}
Combining \eqref{eq:8.10} with Lemma \ref{lem:G-global-bounds}(i), the Jacobian \(\dd y=x s^{-2}\,\dd\beta\), and the boundedness of \((\log n)^{3/2}/s^3\) gives an \(n\)-uniform bound for the rescaled integrand:
\[
C x^4\beta e^{-c_0x}e^{-c\sqrt\beta}.
\]
This is an integrable dominating function on \((0,\infty)^2\). The extension from compact rectangles is therefore rigorous by the same truncate-then-dominated-convergence argument used in Section \ref{sec:A}: choose \(\delta,M,B\) so that the tail integral of this bound is arbitrarily small, then apply dominated convergence on \([\delta,M]\times[0,B]\). The growing upper limit \(\eta s^2\) in \(\beta\) is harmless because the tail \(\beta>B\) is controlled by \(e^{-c\sqrt\beta}\). Consequently,
\[
\lim_{n\to\infty} n(\log n)^{3/2}\E S_{3,n}
=\frac{\sqrt\pi}{2\sqrt2}
\int_0^\infty x^4e^{-x}\dd x
\int_0^\infty \beta e^{-2\sqrt\beta}\dd\beta.
\]
Now
\[
\int_0^\infty x^4e^{-x}\dd x=24,
\qquad
\int_0^\infty\beta e^{-2\sqrt\beta}\dd\beta=\frac34.
\]
The product is \(9\sqrt{\pi/2}\), proving \eqref{eq:8.7}.
\end{proof}

\subsection{Reduction of \texorpdfstring{\(B_n\)}{Bn} to its linear intensity}

Set
\begin{equation}
W_n:=(1-q_n)^{n-1}\prod_{j=3}^n(1-p_{2j}),
\label{eq:8.11}
\end{equation}
so that
\begin{equation}
B_n=\E\left[W_n\left(1-\prod_{3\le i<j\le n}(1-p_{ij})\right)\right].
\label{eq:8.12}
\end{equation}

\begin{lemma}[Deterministic linearization]\label{lem:B-linearization}
Let
\[
T_{3,n}:=1-\prod_{3\le i<j\le n}(1-p_{ij}).
\]
Then
\begin{equation}
S_{3,n}-\frac12S_{3,n}^2\le T_{3,n}\le S_{3,n}.
\label{eq:8.13}
\end{equation}
Consequently,
\begin{equation}
\E S_{3,n}-\frac12\E S_{3,n}^2-\E[(1-W_n)S_{3,n}]
\le B_n\le \E S_{3,n}.
\label{eq:8.14}
\end{equation}
\end{lemma}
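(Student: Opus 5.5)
The plan is to obtain \eqref{eq:8.13} directly from Lemma \ref{lem:product-linearization}, and then derive \eqref{eq:8.14} by multiplying through by \(W_n\) and taking expectations. For \eqref{eq:8.13}, enumerate the pairs \((i,j)\) with \(3\le i<j\le n\) in any fixed order as \(a_1,\dots,a_m\), where \(a_r=p_{ij}\in[0,1]\) is a value of \(\Phi\). Then \(S=S_{3,n}\) and \(T=T_{3,n}\), and \eqref{eq:product-linearization} is exactly \eqref{eq:8.13}. This holds pointwise, for every realization of the diagonal order statistics.

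For the upper bound in \eqref{eq:8.14}, note that \(W_n\) is a product of factors in \([0,1]\), so \(0\le W_n\le1\). Also \(T_{3,n}\ge0\). By \eqref{eq:8.12},
\[
B_n=\E[W_nT_{3,n}]\le \E T_{3,n}\le \E S_{3,n},
\]
where the last step uses the right inequality of \eqref{eq:8.13}.

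For the lower bound, write
\[
W_nT_{3,n}=T_{3,n}-(1-W_n)T_{3,n}.
\]
Since \(0\le 1-W_n\le1\) and \(0\le T_{3,n}\le S_{3,n}\), we have \((1-W_n)T_{3,n}\le(1-W_n)S_{3,n}\). The left inequality of \eqref{eq:8.13} gives \(T_{3,n}\ge S_{3,n}-\tfrac12S_{3,n}^2\). Combining the two yields the pointwise inequality
\[
W_nT_{3,n}\ge S_{3,n}-\tfrac12 S_{3,n}^2-(1-W_n)S_{3,n}.
\]
Taking expectations then gives the left side of \eqref{eq:8.14}.

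The only point needing care is that these expectations are finite, so that splitting \(\E\) across the differences is legitimate. This holds because \(S_{3,n}\le\binom{n-2}{2}q_n\) by monotonicity of \(p\), and all moments of \(q_n\le1\) are finite. Alternatively, every term is a nonnegative bounded random variable for fixed \(n\), since \(S_{3,n}\le\binom{n}{2}\). There is no real obstacle here; the substantive work, namely bounding \(\E S_{3,n}^2\) and \(\E[(1-W_n)S_{3,n}]\) via Lemma \ref{lem:qH-bound}, belongs to the subsequent lemmas rather than to this statement.
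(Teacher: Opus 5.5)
Your proof is correct and takes essentially the same route as the paper: \eqref{eq:8.13} is Lemma~\ref{lem:product-linearization} applied to the list \(\{p_{ij}:3\le i<j\le n\}\), and \eqref{eq:8.14} then follows from \(0\le W_n\le1\) after multiplying by \(W_n\) and taking expectations. The only difference is cosmetic: you split \(W_nT_{3,n}=T_{3,n}-(1-W_n)T_{3,n}\), whereas the paper uses \(\E[W_nS_{3,n}]=\E S_{3,n}-\E[(1-W_n)S_{3,n}]\), and both give the same bounds.
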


\begin{proof}
The deterministic inequality \eqref{eq:8.13} is Lemma \ref{lem:product-linearization} applied to the list \(\{p_{ij}:3\le i<j\le n\}\). Since \(0\le W_n\le1\), multiplying \eqref{eq:8.13} by \(W_n\), taking expectations, and using
\[
\E[W_nS_{3,n}]=\E S_{3,n}-\E[(1-W_n)S_{3,n}]
\]
gives \eqref{eq:8.14}.
\end{proof}

It remains to bound the two error terms.

\begin{lemma}[Negligible second moment]\label{lem:S3-second}
\begin{equation}
\E S_{3,n}^2=O\left(\frac1{n^2\log n}\right)=o(\E S_{3,n}).
\label{eq:8.15}
\end{equation}
\end{lemma}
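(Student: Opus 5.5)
The plan is to expand the square $S_{3,n}^2=\sum_{a}\sum_{b}p_ap_b$, where $a,b$ range over pairs $(i,j)$ with $3\le i<j\le n$, and to condition on $(U_{(1)},U_{(2)})=(u,v)$. As in \eqref{eq:8.3}, I would rewrite $S_{3,n}$ in terms of the \emph{unlabeled} remaining uniforms. Conditionally, the $n-2$ remaining uniforms $V_1,\dots,V_{n-2}$ are i.i.d.\ on $(v,1)$. Then
\[
S_{3,n}=\sum_{\{k,l\}}p\bigl(u,\min(V_k,V_l),\max(V_k,V_l)\bigr),
\]
where $\{k,l\}$ ranges over unordered pairs of these labels. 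Each summand has conditional mean $H(u,v)$ by \eqref{eq:8.2}. I would split the double sum according to how many labels the two pairs share: zero, one, or two.

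\emph{Disjoint pairs.} Here the two summands are functions of disjoint sets of i.i.d.\ uniforms, so they are conditionally independent given $(u,v)$. Each ordered pair of disjoint pairs therefore contributes $H(u,v)^2$. There are at most $\binom{n-2}{2}^2\le n^4$ such ordered pairs of pairs, so this part is bounded by $n^4\,\E[H(U_{(1)},U_{(2)})^2]$.

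\emph{Overlapping pairs, sharing one or two labels.} By the monotonicity lemma for $p$, every summand satisfies $p_b\le \Psi(u)$. For fixed $a$, at most $2n$ pairs $b$ share a label with $a$. Bounding $p_b$ by $\Psi(u)$, this part is therefore at most $2n\,\Psi(u)\,S_{3,n}$. Its conditional expectation is $2n\Psi(u)\binom{n-2}{2}H(u,v)\le n^3\Psi(u)H(u,v)$.

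Taking expectations and invoking Lemma~\ref{lem:qH-bound}, namely \eqref{eq:H2-bound} and \eqref{eq:qH-bound}, gives
\[
\E S_{3,n}^2\le n^4\cdot O\Bigl(\frac1{n^6\log n}\Bigr)+n^3\cdot O\Bigl(\frac1{n^5\log n}\Bigr)=O\Bigl(\frac1{n^2\log n}\Bigr).
\]
Comparing with Theorem~\ref{thm:S3-intensity}, the ratio is $O\bigl(n^{-1}(\log n)^{1/2}\bigr)\to0$, which proves the $o(\E S_{3,n})$ claim.

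The only point that needs care is the conditional independence in the disjoint case. The ordered indices $i<j$ are defined through the ranking, so two $p_{ij}$ with disjoint ordered indices are not obviously independent. This is why I would work with the exchangeable unlabeled representation: there $p_{ij}$ depends only on $u$, $v$ and the two uniforms assigned to the pair, via $p(u,\cdot,\cdot)$ evaluated at their min and max. Once the sum is written in this form, independence and the identity $\E[\text{summand}\mid u,v]=H(u,v)$ are immediate. The rest of the argument is routine counting together with the mixed bounds already established.
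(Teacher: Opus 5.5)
Your proposal is correct and follows essentially the same route as the paper: condition on $(U_{(1)},U_{(2)})$, rewrite $S_{3,n}$ as a pair sum over the exchangeable unlabeled uniforms, bound disjoint pairs by $H(u,v)^2$ via conditional independence and overlapping pairs by bounding one factor (the paper uses $r(u,v)=p(u,v,v)\le\Psi(u)$), then apply Lemma~\ref{lem:qH-bound}. The only difference is cosmetic: you lump the identical-pair and shared-index terms into the single bound $2n\Psi(u)S_{3,n}$, whereas the paper counts them separately before absorbing one into the other.
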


\begin{proof}
Condition on \((U_{(1)},U_{(2)})=(u,v)\), and let \(m=n-2\). Let \(X_1,\ldots,X_m\) be i.i.d. uniform on \((v,1)\). Define
\[
h_u(x,y):=p(u,\min\{x,y\},\max\{x,y\}).
\]
Under this conditional law, \(S_{3,n}\) has the same distribution as the pair sum
\[
\sum_{1\le a<b\le m}h_u(X_a,X_b).
\]
Let \(r(u,v):=p(u,v,v)\). Monotonicity of the pair probability gives \(0\le h_u(x,y)\le r(u,v)\). Since \(X_1,X_2\) are independent uniform variables on \((v,1)\), the definition of \(H\) gives
\[
\E[h_u(X_1,X_2)]=H(u,v).
\]
Now expand the square of this pair sum. There are three types of terms. Identical pairs, of which there are \(\binom m2\le m^2\), contribute at most
\[
\binom m2\E h_u(X_1,X_2)^2
\le C m^2 r(u,v)H(u,v).
\]
Pairs sharing exactly one index, of which there are \(3\binom m3\le m^3\), may be represented by \((1,2)\) and \((1,3)\). Their contribution is at most
\[
C m^3\E[h_u(X_1,X_2)h_u(X_1,X_3)]
\le C m^3 r(u,v)H(u,v),
\]
because one factor is bounded by \(r(u,v)\) almost surely, so
\[
\E[h_u(X_1,X_2)h_u(X_1,X_3)]
\le \E[r(u,v)h_u(X_1,X_3)]=r(u,v)H(u,v).
\]
Disjoint pairs, such as \((1,2)\) and \((3,4)\), are independent and contribute at most
\[
C m^4 H(u,v)^2.
\]
Absorbing the identical-pair term into the shared-index term gives
\begin{equation}
\E(S_{3,n}^2\mid u,v)
\le C\left(m^3r(u,v)H(u,v)+m^4H(u,v)^2\right).
\label{eq:8.16}
\end{equation}
Taking expectations in \eqref{eq:8.16} and using \(m\le n\), \(r\le \Psi\), and Lemma \ref{lem:qH-bound}, we get
\[
\E S_{3,n}^2
\le C\left(n^3\E[\Psi(U_{(1)})H(U_{(1)},U_{(2)})]
+n^4\E[H(U_{(1)},U_{(2)})^2]\right)
=O\left(\frac1{n^2\log n}\right).
\]
This proves \eqref{eq:8.15}.
\end{proof}

\begin{lemma}[The prefactor is asymptotically inert]\label{lem:prefactor-inert}
\begin{equation}
\E[(1-W_n)S_{3,n}]
=O\left(\frac1{n^2\log n}\right)=o(\E S_{3,n}).
\label{eq:8.18}
\end{equation}
\end{lemma}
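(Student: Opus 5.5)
We need to bound \(\E[(1-W_n)S_{3,n}]\). Since \(W_n=(1-q_n)^{n-1}\prod_{j\ge3}(1-p_{2j})\) is a product of numbers in \([0,1]\), the elementary inequality \(1-\prod(1-a_r)\le\sum a_r\) (Lemma \ref{lem:product-linearization}) gives
\[
0\le 1-W_n\le (n-1)q_n+S_{2,n}\le (2n-3)q_n\le 2n\,\Psi(U_{(1)}),
\]
using \(p_{2j}\le q_n\) as in the proof of Lemma \ref{lem:A-linearization}. The key point is that this bound is a function of \(U_{(1)}\) alone. Hence
\[
\E[(1-W_n)S_{3,n}]\le 2n\,\E\bigl[\Psi(U_{(1)})S_{3,n}\bigr].
\]

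The next step is to condition on \((U_{(1)},U_{(2)})=(u,v)\) and use \eqref{eq:8.3}. Since \(\Psi(U_{(1)})\) is measurable with respect to this conditioning,
\[
\E\bigl[\Psi(U_{(1)})S_{3,n}\bigr]=\binom{n-2}{2}\E\bigl[\Psi(U_{(1)})H(U_{(1)},U_{(2)})\bigr]\le n^2\,\E[\Psi H].
\]
Lemma \ref{lem:qH-bound}, specifically \eqref{eq:qH-bound}, gives \(\E[\Psi H]=O(n^{-5}/\log n)\). Combining the two displays,
\[
\E[(1-W_n)S_{3,n}]\le 2n^3\cdot O\!\left(\frac1{n^5\log n}\right)=O\!\left(\frac1{n^2\log n}\right).
\]

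Finally, Theorem \ref{thm:S3-intensity} gives \(\E S_{3,n}\asymp n^{-1}(\log n)^{-3/2}\), and \(n^{-2}(\log n)^{-1}=o\bigl(n^{-1}(\log n)^{-3/2}\bigr)\), which proves \eqref{eq:8.18}.

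The argument has essentially no obstacle, because all the analytic work is already done in Lemma \ref{lem:qH-bound}. The only point needing care is to avoid the tempting bound \(1-W_n\le1\), which is too weak. One must also avoid keeping the \(p_{2j}\) terms explicitly, since they correlate with \(S_{3,n}\) through the shared order statistics. Dominating every factor of \(1-W_n\) by \(q_n=\Psi(U_{(1)})\) removes that correlation and reduces the claim directly to \eqref{eq:qH-bound}.
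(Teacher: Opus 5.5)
Your proof is correct and is essentially the paper's argument. The paper also bounds $1-W_n\le(n-1)q_n+\sum_{j\ge3}p_{2j}$, conditions on $(U_{(1)},U_{(2)})=(u,v)$, and reduces everything to $Cn^3\,\E[\Psi(U_{(1)})H(U_{(1)},U_{(2)})]$ via \eqref{eq:qH-bound}. The only cosmetic difference is that the paper first bounds each $p_{2j}$ by $r(u,v)=p(u,v,v)$ and then uses $r\le\Psi$, whereas you go directly to $p_{2j}\le q_n=\Psi(U_{(1)})$.
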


\begin{proof}
Applying Lemma \ref{lem:product-linearization} in the form \(1-\prod_r(1-a_r)\le\sum_r a_r\) to the factors defining \(W_n\),
\begin{equation}
1-W_n\le (n-1)q_n+\sum_{j=3}^n p_{2j}.
\label{eq:8.19}
\end{equation}
Therefore
\begin{equation}
\E[(1-W_n)S_{3,n}]
\le \E[(n-1)q_nS_{3,n}]
+\E\left[S_{3,n}\sum_{j=3}^n p_{2j}\right].
\label{eq:8.20}
\end{equation}
Conditioning on \((U_{(1)},U_{(2)})=(u,v)\), the first term is controlled by
\[
(n-1)\Psi(u)\binom{n-2}{2}H(u,v).
\]
For the second term, each summand \(p_{2j}\) is at most \(r(u,v):=p(u,v,v)\), and there are \(n-2\) such summands. Hence its conditional contribution is at most
\[
(n-2)r(u,v)\binom{n-2}{2}H(u,v).
\]
Since \(r(u,v)\le \Psi(u)\), this has the same order as the first conditional contribution, up to an absolute constant.

Both terms in \eqref{eq:8.20} are now controlled by Lemma \ref{lem:qH-bound}. Indeed, the first conditional contribution is at most \(Cn^3\Psi(u)H(u,v)\). For the second contribution, \(r(u,v)\le \Psi(u)\), so it is also at most \(Cn^3\Psi(u)H(u,v)\). Taking expectations and applying Lemma \ref{lem:qH-bound} gives, for each term in \eqref{eq:8.20},
\[
Cn^3\E\bigl[\Psi(U_{(1)})H(U_{(1)},U_{(2)})\bigr]
=O\left(\frac1{n^2\log n}\right).
\]
Therefore both contributions in \eqref{eq:8.20} are bounded by
\[
O\left(n^{-2}(\log n)^{-1}\right).
\]
\end{proof}

\begin{theorem}[Asymptotics of \(B_n\)]\label{thm:B-asympt}
\begin{equation}
B_n=\left(9\sqrt{\frac\pi2}+o(1)\right)
\frac1{n(\log n)^{3/2}}.
\label{eq:8.21}
\end{equation}
\end{theorem}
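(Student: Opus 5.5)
The plan is a sandwich argument built on the inequality \eqref{eq:8.14} from Lemma \ref{lem:B-linearization}:
\[
\E S_{3,n}-\frac12\E S_{3,n}^2-\E[(1-W_n)S_{3,n}]
\le B_n\le \E S_{3,n}.
\]
The upper bound is immediate from this display. Theorem \ref{thm:S3-intensity} then gives
\[
B_n\le\left(9\sqrt{\pi/2}+o(1)\right)\frac{1}{n(\log n)^{3/2}}.
\]

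For the lower bound, the two error terms on the left of \eqref{eq:8.14} must be negligible relative to \(\E S_{3,n}\).

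\begin{itemize}
\item Lemma \ref{lem:S3-second} gives \(\E S_{3,n}^2=O(n^{-2}(\log n)^{-1})\).
\item Lemma \ref{lem:prefactor-inert} gives \(\E[(1-W_n)S_{3,n}]=O(n^{-2}(\log n)^{-1})\).
\end{itemize}

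Both bounds are of order \(n^{-2}(\log n)^{-1}\). Compared with the main scale \(n^{-1}(\log n)^{-3/2}\), the ratio is
\[
\frac{n^{-2}(\log n)^{-1}}{n^{-1}(\log n)^{-3/2}}=\frac{\sqrt{\log n}}{n}\to0.
\]
So both error terms are \(o\bigl(n^{-1}(\log n)^{-3/2}\bigr)\). Substituting these bounds and the asymptotics of \(\E S_{3,n}\) into the left side of \eqref{eq:8.14} gives the matching lower bound. Combining the two bounds yields \eqref{eq:8.21}.

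No step here is a real obstacle: all the analytic work is already contained in Theorem \ref{thm:S3-intensity} and Lemmas \ref{lem:qH-bound}, \ref{lem:S3-second} and \ref{lem:prefactor-inert}. The only point to check is that the error orders are compared on the correct scale. The \(1/n\) gain in the error terms dominates the \(\sqrt{\log n}\) loss, so the comparison holds.
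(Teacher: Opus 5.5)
Your proposal is correct and is essentially the paper's own proof. Both arguments sandwich $B_n$ via \eqref{eq:8.14}, take the main term from Theorem~\ref{thm:S3-intensity}, and use Lemmas~\ref{lem:S3-second} and~\ref{lem:prefactor-inert} to show that both lower-bound error terms are $O(n^{-2}(\log n)^{-1})=o(\E S_{3,n})$; your explicit ratio $\sqrt{\log n}/n\to0$ is just the check the paper summarizes as ``$o(\E S_{3,n})$.''
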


\begin{proof}
Lemma \ref{lem:B-linearization} gives
\[
\E S_{3,n}-\frac12\E S_{3,n}^2-\E[(1-W_n)S_{3,n}]
\le B_n\le\E S_{3,n}.
\]
By Lemmas \ref{lem:S3-second} and \ref{lem:prefactor-inert}, both error terms are \(o(\E S_{3,n})\). The result follows from Theorem \ref{thm:S3-intensity}.
\end{proof}

\section{Tail bound for support sizes at least three}\label{sec:support-three}

This section proves that supports of size at least three are negligible at the scale of \(B_n\):
\begin{equation}
\Prob(\kappa_n\ge3)=o\left(\frac1{n(\log n)^{3/2}}\right).
\label{eq:9.1}
\end{equation}
The proof has two parts.  First we recall the fixed-support bound of Chen and Peng \cite{chen-peng}, which is the quantitative estimate behind the qualitative statement \(\Prob(\kappa_n\ge3)\to0\).  Then we sharpen only the support-three term, because the Chen--Peng bound is already sufficient for all \(k\ge4\).

\subsection{Historical fixed-support bounds}

Chen and Peng \cite[Theorem 9]{chen-peng}, using both the first-order and second-order optimality conditions, proved the following GOE fixed-support estimate.  We state it in the present normalization.

\begin{theorem}[Chen--Peng fixed-support tail bound]\label{thm:chen-peng-fixed}
There exists \(\eta>0\) such that for every \(k\ge2\),
\begin{equation}
\Prob(\kappa_n=k)
\le
\frac{(2k-3)!}{(k-1)!(n+1)^{k-2}}
\left(\eta^2\log n+\frac{k-1}{2k-2}\right)^{k-1}
\exp\left(-\frac{(k-2)^2}{4}\right).
\label{eq:9.2}
\end{equation}
\end{theorem}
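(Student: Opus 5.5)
Since this estimate is quoted from Chen and Peng \cite[Theorem 9]{chen-peng}, the most economical route is to check that their GOE normalization matches \eqref{eq:1.2} and import the result. Their off-diagonal-to-diagonal variance ratio must be \(1/2\); if it is not, rescale \(Q\). Multiplying \(Q\) by a positive constant leaves \(x^\star\) and \(\kappa_n\) unchanged, and only the constant \(\eta\) absorbs the change. If a self-contained argument is wanted, I would reconstruct it as a first-moment bound over fixed supports, as follows.

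\emph{Step 1 (union over supports).} Write \(\Prob(\kappa_n=k)\le\binom nk\Prob(E_K)\) for a fixed \(K\) with \(|K|=k\). Here \(E_K\) is the event that the KKT candidate \(x_K\) on \(K\) is optimal. On \(E_K\) three things hold:
\begin{enumerate}[label=(\alph*)]
\item the value \(\lambda_K=1/\1_K^\top Q_K^{-1}\1_K\) lies strictly below every diagonal entry, including the \(n-k\) entries outside \(K\), as in the proof of Proposition \ref{prop:det-support};
\item the outside inequalities \eqref{eq:3.4} hold;
\item the second-order condition holds: \(Q_K-\lambda_K\1_K\1_K^\top\) is positive semidefinite on \(\{d:\1_K^\top d=0\}\).
\end{enumerate}

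\emph{Step 2 (condition on \(Q_K\)).} Condition on the principal block \(Q_K\). The \(n-k\) outside diagonal entries are independent of \(Q_K\), so requirement (a) contributes the factor \(\Phi(\lambda_K)^{n-k}\)-type term, namely \((1-\Phi(\lambda_K))^{n-k}\). Integrating this against the law of \(\lambda_K\) gives the polynomial decay. Only the lower tail of \(\lambda_K\) at depth \(\asymp-\sqrt{\log n}\) matters, and there this factor behaves like a Beta-type weight as in Lemma \ref{lem:beta-log}. This yields powers \((n+1)^{-(\cdot)}\) together with the \((\log n)^{k-1}\) factor. The \(\log n\) powers come from the Mills-ratio computations of Lemma \ref{lem:tail-transfer}, once per degree of freedom of the tangent space.

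\emph{Step 3 (second-order factor).} The condition (c) requires a \((k-1)\)-dimensional Gaussian symmetric matrix, shifted by \(-\lambda_K\), to be positive semidefinite while \(\lambda_K\) is very negative. I would bound this through the joint eigenvalue density on the tangent space. The requirement that the Gaussian part stay above a common level forces a repulsion/Vandermonde penalty, and this is the source of the Gaussian factor \(e^{-(k-2)^2/4}\). The combinatorial prefactor \((2k-3)!/(k-1)!\) arises from integrating the ordered-eigenvalue region and from the moments \(\int t^{m}e^{-t}\,dt\) in the diagonal integration.

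\emph{Main obstacle.} The hardest step is Step 3: obtaining a bound that is uniform in \(k\) with the exact shape \(e^{-(k-2)^2/4}\), while keeping the dependence on \(\lambda_K\) decoupled enough to perform the order-statistic integral of Step 2. Here I would follow Chen--Peng's reduction, which bounds the probability by an explicit integral over the smallest eigenvalue of the projected block, rather than attempt a sharper computation. This is because only the crude form \eqref{eq:9.2} is needed for \(k\ge4\) in what follows.
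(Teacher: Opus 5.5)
Your main route, checking the normalization and importing Chen--Peng's Theorem~9, is exactly what the paper does: it states the bound without proof, citing \cite[Theorem 9]{chen-peng} ``in the present normalization.'' Your Steps 1--3 are an optional heuristic that is not needed, and as written they do not actually derive the specific factors in \eqref{eq:9.2}. One correction to your normalization remark: multiplying \(Q\) by a positive constant cannot change the off-diagonal-to-diagonal variance ratio, so rescaling is no remedy for a mismatched ratio. Instead, the ratio \(1/2\) is forced for any GOE by orthogonal invariance, and because \(\kappa_n\) is invariant under \(Q\mapsto cQ\) for \(c>0\), the bound transfers verbatim with the same \(\eta\); there is nothing for \(\eta\) to absorb.
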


At \(k=3\), Theorem \ref{thm:chen-peng-fixed} gives the explicit bound
\begin{equation}
\Prob(\kappa_n=3)
\le
\frac{3}{n+1}\left(\eta^2\log n+\frac12\right)^2e^{-1/4}
=O\left(\frac{(\log n)^2}{n}\right).
\label{eq:9.3}
\end{equation}
Together with the summation over \(k\ge4\) below, this gives the Chen--Peng-based estimate
\[
\Prob(\kappa_n\ge3)=O\left(\frac{(\log n)^2}{n}\right),
\]
which is the bound used by Bomze, Schachinger, and Ullrich \cite{bomze-schachinger-ullrich} in their discussion of typical random StQP instances. However, this rate is not sharp enough for the present expansion, because
\[
\frac{(\log n)^2/n}{1/(n(\log n)^{3/2})}=(\log n)^{7/2}\to\infty.
\]
Thus a sharper treatment of the support-three term is needed.  Chen and Pittel \cite{chen-pittel} later obtained polylogarithmic support-size bounds under much broader one-tail assumptions; for the exact GOE case considered here, their results broaden distributional scope rather than improving the fixed-\(k=3\) rate in \eqref{eq:9.3}.

For \(k\ge4\), the Chen--Peng estimate is sufficient.

\begin{lemma}[Tail for \(k\ge4\)]\label{lem:kge4-tail}
\begin{equation}
\Prob(\kappa_n\ge4)
\le C\frac{(\log n)^3}{n^2}.
\label{eq:9.4}
\end{equation}
\end{lemma}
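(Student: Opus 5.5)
The plan is to sum the Chen--Peng fixed-support bound \eqref{eq:9.2} over \(k\ge4\):
\[
\Prob(\kappa_n\ge4)=\sum_{k=4}^n\Prob(\kappa_n=k),
\]
and to show that the \(k=4\) term dominates while the remaining terms form a summable tail. Write
\[
a_k:=\frac{(2k-3)!}{(k-1)!(n+1)^{k-2}}\left(\eta^2\log n+\frac{k-1}{2k-2}\right)^{k-1}e^{-(k-2)^2/4}.
\]
Since \((k-1)/(2k-2)=1/2\) for every \(k\ge2\), the bracket equals \(\lambda_n:=\eta^2\log n+1/2\) for all \(k\). Hence
\[
a_k=\frac{(2k-3)!}{(k-1)!}\,\frac{\lambda_n^{k-1}}{(n+1)^{k-2}}\,e^{-(k-2)^2/4}.
\]
At \(k=4\) this gives \(a_4=\frac{5!}{3!}\lambda_n^3(n+1)^{-2}e^{-1}=O((\log n)^3/n^2)\), which is exactly the claimed order.

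For the tail, I would compare consecutive terms. First,
\[
\frac{a_{k+1}}{a_k}=\frac{(2k-1)(2k-2)}{k}\cdot\frac{\lambda_n}{n+1}\cdot e^{-(2k-3)/4}\le 4\,\frac{k\lambda_n}{n+1}\,e^{-(2k-3)/4}.
\]
Because \(k e^{-(2k-3)/4}\) is bounded uniformly in \(k\ge4\), this gives
\[
\frac{a_{k+1}}{a_k}\le C\,\frac{\log n}{n}\le\frac12
\]
for all large \(n\), uniformly in \(k\). The sequence \((a_k)_{k\ge4}\) is then dominated by a geometric series with ratio at most \(1/2\), so
\[
\sum_{k\ge4}a_k\le2a_4\le C\,\frac{(\log n)^3}{n^2}.
\]
Small \(n\) are covered by enlarging \(C\), since the probability is at most \(1\).

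The main thing to check is uniformity in \(k\) up to \(k=n\). The factorial ratio grows only linearly in \(k\), namely \((2k-1)(2k-2)/k\le4k\), while the Gaussian factor \(e^{-(k-2)^2/4}\) supplies a ratio of \(e^{-(2k-3)/4}\). That decay beats the linear growth for every \(k\), so no separate treatment of large \(k\) is needed. The only other point is to read \eqref{eq:9.2} as valid for all \(k\ge2\) with the single constant \(\eta\), as stated, so that the comparison uses the same \(\lambda_n\) throughout.
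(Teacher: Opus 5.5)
Your proposal is correct and follows essentially the paper's route: you sum the Chen--Peng bound \eqref{eq:9.2} over \(k\ge4\), read off the \(k=4\) order \((\log n)^3/n^2\), and let the factor \(e^{-(k-2)^2/4}\) beat the factorial growth so that the tail is geometrically dominated. The only difference is cosmetic: you use a ratio test with the exact bracket \(\eta^2\log n+1/2\), whereas the paper factors out \((\log(n+1))^3/(n+1)^2\) and sums \(\beta_k r_n^{k-4}\) using \(r_n\le 1/2\).
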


\begin{proof}
From \eqref{eq:9.2}, for \(k\ge4\),
\[
\Prob(\kappa_n=k)
\le
\frac{(\log(n+1))^3}{(n+1)^2}\,\beta_k r_n^{k-4},
\]
where
\[
\beta_k:=\frac{(2k-3)!}{(k-1)!}\eta^{2k-2}
\exp\left(-\frac{(k-2)^2}{4}\right),
\qquad
r_n:=\frac{\log(n+1)}{n+1}.
\]
For large \(n\), \(r_n\le1/2\), and the Gaussian factor in \(\beta_k\) dominates the factorial growth. Hence
\[
\sum_{k\ge4}\beta_k2^{-(k-4)}<\infty.
\]
Summing over \(k\ge4\) proves \eqref{eq:9.4}.
\end{proof}

\subsection{A sharper row-average estimate}

The improvement over \eqref{eq:9.3} comes from retaining the active diagonal term in the row-average necessary condition. For a fixed support \(K\) of size \(k\) and a row \(i\in K\), define
\begin{equation}
R_{K,i}:=\frac1k\sum_{j\in K}Q_{ij}.
\label{eq:9.5}
\end{equation}
Then
\[
R_{K,i}\sim N\left(0,\sigma_k^2\right),
\qquad
\sigma_k^2:=\frac{k+1}{2k^2},
\]
because the sum contains one diagonal term of variance \(1\) and \(k-1\) off-diagonal terms of variance \(1/2\). Let \(M_m\) denote the minimum of \(m\) independent \(N(0,1)\) random variables.

\begin{theorem}[Support-three and fixed-\(k\) row-average bounds]\label{thm:support-three-bound}
For every fixed \(k\ge3\), define
\[
\alpha_k:=\sigma_k^{-2}=\frac{2k^2}{k+1}=2k-2+\frac{2}{k+1}.
\]
There exists \(C_k<\infty\) such that
\begin{equation}
\Prob(\kappa_n=k)
\le C_k n^{2-k-2/(k+1)}(\log n)^{k-3/2+1/(k+1)}.
\label{eq:9.10}
\end{equation}
In particular, for \(k=3\), there exists \(C>0\) such that
\begin{equation}
\Prob(\kappa_n=3)
\le C\frac{(\log n)^{7/4}}{n^{3/2}}.
\label{eq:9.7}
\end{equation}
Consequently,
\begin{equation}
\Prob(\kappa_n\ge3)
\le C\frac{(\log n)^{7/4}}{n^{3/2}}
=o\left(\frac1{n(\log n)^{3/2}}\right).
\label{eq:9.8}
\end{equation}
\end{theorem}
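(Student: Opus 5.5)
Fix \(k\ge3\). On \(\{\kappa_n=k\}\), Proposition \ref{prop:det-support} gives a support \(K\) with \(|K|=k\) and an active row \(i\in K\) with \(R_{K,i}<\min_r Q_{rr}\). A union bound over the \(\binom nk\le n^k\) supports and \(k\) rows gives
\[
\Prob(\kappa_n=k)\le k\binom nk\,\Prob\Bigl(R_{K,i}<\min_{1\le r\le n}Q_{rr}\Bigr)
\]
for a fixed pair \((K,i)\).

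The key point is that \(R_{K,i}\) contains the diagonal entry \(Q_{ii}\), so it is not independent of the full diagonal minimum. The plan is to weaken the event to
\[
R_{K,i}<\min_{r\notin K}Q_{rr}.
\]
This minimum is over \(m:=n-k\) diagonal entries that appear neither in row \(i\)'s sum (only \(Q_{ii}\) and off-diagonals \(Q_{ij}\), \(j\in K\), appear there) nor anywhere else in \(R_{K,i}\). Hence \(R_{K,i}\sim N(0,\sigma_k^2)\) is independent of \(M_m:=\min_{r\notin K}Q_{rr}\).

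Now apply Theorem \ref{thm:normal-minimum} with \(a=\sigma_k^{-1}=\sqrt{\alpha_k}\ge1\). The upper bound \eqref{eq:normal-minimum-upper} gives
\[
\Prob(R_{K,i}\le M_m)\le C_k m^{-\alpha_k}(\log m)^{(\alpha_k-1)/2}.
\]
Since \(m=n-k\sim n\), multiplying by \(kn^k\) yields
\[
\Prob(\kappa_n=k)\le C_k n^{k-\alpha_k}(\log n)^{(\alpha_k-1)/2}.
\]
Substituting \(\alpha_k=2k-2+2/(k+1)\):
\[
k-\alpha_k=2-k-\frac{2}{k+1},\qquad
\frac{\alpha_k-1}{2}=k-\frac32+\frac1{k+1},
\]
which is exactly \eqref{eq:9.10}. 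At \(k=3\), \(\alpha_3=9/2\), so the bound is \(n^{3-9/2}(\log n)^{7/4}=n^{-3/2}(\log n)^{7/4}\), which is \eqref{eq:9.7}.

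For \eqref{eq:9.8}, write \(\Prob(\kappa_n\ge3)=\Prob(\kappa_n=3)+\Prob(\kappa_n\ge4)\). Lemma \ref{lem:kge4-tail} bounds the second term by \(C(\log n)^3/n^2\), which is \(o\bigl((\log n)^{7/4}n^{-3/2}\bigr)\). Finally, \((\log n)^{7/4}n^{-3/2}\cdot n(\log n)^{3/2}=(\log n)^{13/4}n^{-1/2}\to0\), so the combined bound is \(o\bigl(1/(n(\log n)^{3/2})\bigr)\).

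The only step needing care is the independence reduction. The plan depends on the weakened event \(\{R_{K,i}<\min_{r\notin K}Q_{rr}\}\) genuinely separating the variables: the excluded diagonals must be exactly those of \(K\), so that \(R_{K,i}\) and \(M_m\) share no entries. Beyond that, Theorem \ref{thm:normal-minimum} only needs to hold uniformly in \(m=n-k\) with \(k\) fixed, which its \(n\ge2\) upper bound provides. Since each fixed-\(k\) bound is used only for \(k=3\), no summation over \(k\) of the constants \(C_k\) is needed.
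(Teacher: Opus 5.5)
Your proposal is correct and follows the paper's proof essentially step for step. Both arguments take a union bound over the \(k\binom nk\) pairs \((K,i)\), weaken the diagonal minimum to \(\min_{r\notin K}Q_{rr}\) to get independence from \(R_{K,i}\), apply Theorem \ref{thm:normal-minimum} with \(a=\sqrt{\alpha_k}\), and then add Lemma \ref{lem:kge4-tail} for \(k\ge4\).
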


\begin{proof}
Fix \(k\ge3\), a support \(K\) of size \(k\), and a row \(i\in K\). If \(K\) is the support of the global optimizer and row \(i\) is selected by Proposition \ref{prop:det-support}, then
\[
R_{K,i}
<\min_{1\le r\le n}Q_{rr}
\le \min_{r\notin K}Q_{rr}.
\]
The random variable \(R_{K,i}\) depends only on the entries in row \(i\) restricted to \(K\), while \(\min_{r\notin K}Q_{rr}\) depends only on the diagonal entries outside \(K\). Hence the two variables are independent. If \(Y_k\sim N(0,\sigma_k^2)\) is independent of \(M_{n-k}\), then
\[
\Prob\left(R_{K,i}<\min_{r\notin K}Q_{rr}\right)
=
\Prob\left(Y_k<M_{n-k}\right).
\]
Applying Theorem \ref{thm:normal-minimum} with \(a=1/\sigma_k=\sqrt{\alpha_k}\), and then taking a union bound over the \(k\binom nk\) choices of \((K,i)\), gives
\[
\Prob(\kappa_n=k)
\le C_k n^k n^{-\alpha_k}(\log n)^{(\alpha_k-1)/2}.
\]
Since \(k-\alpha_k=2-k-2/(k+1)\) and \((\alpha_k-1)/2=k-3/2+1/(k+1)\), this proves \eqref{eq:9.10}.

For \(k=3\), \(\sigma_3^2=2/9\) and \(\alpha_3=9/2\). Substituting these values in \eqref{eq:9.10} gives \eqref{eq:9.7}. Adding the bound \eqref{eq:9.4} for \(k\ge4\) gives \eqref{eq:9.8}.
\end{proof}

Compared with the historical bound \eqref{eq:9.3}, the support-three estimate in Theorem \ref{thm:support-three-bound} gains the factor
\[
\frac{(\log n)^2/n}{(\log n)^{7/4}/n^{3/2}}
=\sqrt n\,(\log n)^{1/4}.
\]
This gain is precisely what is needed to make the support-three contribution negligible relative to \(B_n\).

\begin{remark}[Variance improvement, second-order conditions, and fixed-\(k\) comparison]
The Chen--Peng estimate \eqref{eq:9.2} applies across general support sizes and is derived from a fixed-support analysis using both first- and second-order optimality conditions. If \(k\ge3\) is fixed, \eqref{eq:9.2} has order
\[
n^{-(k-2)}(\log n)^{k-1},
\]
whereas the fixed-\(k\) bound in Theorem \ref{thm:support-three-bound} gives
\[
n^{-(k-2)-2/(k+1)}(\log n)^{k-3/2+1/(k+1)}.
\]
Thus Theorem \ref{thm:support-three-bound} gives a sharper fixed-\(k\) upper-bound order, using only the row-average necessary condition \eqref{eq:row-avg-cond}.

The source of the improvement is the active diagonal term. For a fixed support \(K\) of size \(k\), the exact row average
\[
\frac1k\sum_{j\in K}Q_{ij}
\]
has variance \((1+(k-1)/2)/k^2=(k+1)/(2k^2)\). By contrast, the off-diagonal-only surrogate
\[
\frac1{k-1}\sum_{j\in K\setminus\{i\}}Q_{ij}
\]
has variance \((k-1)(1/2)/(k-1)^2=1/(2k-2)\), which is larger. Chen and Peng drop the active diagonal entry in their one-row relaxation to avoid a dependence issue when the row condition is combined with the second-order optimality condition on the active principal submatrix. The price of that decoupling is the larger variance of the off-diagonal-only surrogate. For \(k=3\), retaining the diagonal term improves the rare-event exponent from \(4\) to \(9/2\), producing the extra factor \(n^{-1/2}\) in Theorem \ref{thm:support-three-bound}.

Although the bound in Theorem \ref{thm:support-three-bound} uses only the row-average condition, one may try to combine this diagonal-retaining row-average estimate with the second-order optimality condition. We do not pursue this refinement, because the support-three estimate is already negligible on the \(B_n\) scale. A sharp fixed-\(k\) asymptotic for \(k\ge3\), using both first- and second-order conditions, remains a separate problem.
\end{remark}

\section{Main theorem}

We now combine the product decomposition with the estimates for \(S_n\), \(A_n\), \(B_n\), and the support-size tail.

Recall
\[
S_n:=\E\left[1-(1-q_n)^{n-1}\right],
\qquad q_n=\Phi(\sqrt2 Z_{(1)}).
\]

\begin{theorem}[Expansion for singleton optimality]\label{thm:singleton-expansion}
For the GOE standard quadratic program \eqref{eq:1.1},
\begin{equation}
\Prob(\kappa_n=2)=S_n+A_n+B_n+o(B_n),
\label{eq:10.1}
\end{equation}
\begin{equation}
\Prob(\kappa_n>1)=S_n+A_n+B_n+o(B_n),
\label{eq:10.2}
\end{equation}
and
\begin{equation}
\Prob(\kappa_n=1)=1-S_n-A_n-B_n+o(B_n),
\label{eq:10.3}
\end{equation}
where
\begin{equation}
A_n\sim3\sqrt{\frac\pi2}\frac1{n\sqrt{\log n}},
\qquad
B_n\sim9\sqrt{\frac\pi2}\frac1{n(\log n)^{3/2}}.
\label{eq:10.4}
\end{equation}
Moreover,
\begin{equation}
S_n\sim2\sqrt{2\pi}\frac{\sqrt{\log n}}n.
\label{eq:10.5}
\end{equation}
Consequently,
\begin{equation}
\Prob(\kappa_n>1)
\sim2\sqrt{2\pi}\frac{\sqrt{\log n}}n,
\qquad
\Prob(\kappa_n=1)=1-\Theta\left(\frac{\sqrt{\log n}}n\right).
\label{eq:10.6}
\end{equation}
\end{theorem}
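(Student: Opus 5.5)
The plan is to assemble the theorem from results already established; no new estimates are needed. I would start from the exact decomposition in Proposition \ref{prop:product-decomp}. Equation \eqref{eq:5.7} gives \(\Prob(\kappa_n=2)=S_n+A_n+B_n+O(\Prob(\kappa_n\ge3))\). More precisely, \eqref{eq:4.9} combined with \eqref{eq:5.6} gives
\[
0\le S_n+A_n+B_n-\Prob(\kappa_n=2)\le \Prob(\kappa_n\ge3).
\]
Theorem \ref{thm:support-three-bound}, through \eqref{eq:9.8}, gives \(\Prob(\kappa_n\ge3)=o\bigl(n^{-1}(\log n)^{-3/2}\bigr)\). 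Theorem \ref{thm:B-asympt} shows that \(B_n\) is exactly of this order with a positive constant, so \(\Prob(\kappa_n\ge3)=o(B_n)\). This proves \eqref{eq:10.1}.

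For \eqref{eq:10.2}, I would write \(\Prob(\kappa_n>1)=\Prob(\kappa_n=2)+\Prob(\kappa_n\ge3)\) and absorb the second term into \(o(B_n)\) by the same estimate. Equivalently, I could use the inclusion \(\{\kappa_n=2\}\subseteq\bigcup F_{ij}\subseteq\{\kappa_n>1\}\), which sandwiches \(\Prob(\kappa_n>1)\) between \(S_n+A_n+B_n\) and \(S_n+A_n+B_n+\Prob(\kappa_n\ge3)\). Then \eqref{eq:10.3} is the complement of \eqref{eq:10.2}, using \(\kappa_n\ge1\) almost surely.

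The asymptotics in \eqref{eq:10.4} are Corollary \ref{cor:A-asympt} and Theorem \ref{thm:B-asympt}. The asymptotic \eqref{eq:10.5} is Proposition \ref{prop:S-asympt}. For \eqref{eq:10.6}, I would note that \(A_n/S_n=O(1/\log n)\) and \(B_n/S_n=O((\log n)^{-2})\). Hence \(S_n+A_n+B_n+o(B_n)=S_n(1+o(1))\), which gives \(\Prob(\kappa_n>1)\sim2\sqrt{2\pi}\sqrt{\log n}/n\). The \(\Theta\) statement for \(\Prob(\kappa_n=1)\) then follows immediately.

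I expect no step to be a genuine obstacle, since all the analytic work sits in the cited sections. The only point that needs care is the direction of the error bound. The comparison between \(\Prob(\kappa_n=2)\) and the union probability goes through \eqref{eq:4.9}, so the error is nonnegative and bounded by \(\Prob(\kappa_n\ge3)\). Because \(B_n\) is bounded below by a positive multiple of \(n^{-1}(\log n)^{-3/2}\), the \(O(\cdot)\) in \eqref{eq:5.7} can legitimately be upgraded to \(o(B_n)\).
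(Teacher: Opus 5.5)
Your proposal is correct and follows the paper's proof essentially line by line: the decomposition \eqref{eq:5.7}, the bound $\Prob(\kappa_n\ge3)=o(B_n)$ from Theorem~\ref{thm:support-three-bound}, the identity $\Prob(\kappa_n>1)=\Prob(\kappa_n=2)+\Prob(\kappa_n\ge3)$, and the fact that $S_n$ dominates $A_n$ and $B_n$. Your explicit sign bookkeeping via \eqref{eq:4.9} and the ratios $A_n/S_n=O(1/\log n)$ and $B_n/S_n=O((\log n)^{-2})$ simply spell out steps that the paper states more tersely.
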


\begin{proof}
Proposition \ref{prop:product-decomp} gives
\[
\Prob(\kappa_n=2)=S_n+A_n+B_n+O(\Prob(\kappa_n\ge3)).
\]
By Theorem \ref{thm:support-three-bound},
\[
\Prob(\kappa_n\ge3)=o(B_n).
\]
This proves \eqref{eq:10.1}. Since
\[
\Prob(\kappa_n>1)=\Prob(\kappa_n=2)+\Prob(\kappa_n\ge3),
\]
\eqref{eq:10.2} follows. Equation \eqref{eq:10.3} is the complement. The asymptotics \eqref{eq:10.4} are Corollary \ref{cor:A-asympt} and Theorem \ref{thm:B-asympt}, while \eqref{eq:10.5} is Proposition \ref{prop:S-asympt}. Finally, \(S_n\) dominates both \(A_n\) and \(B_n\), proving \eqref{eq:10.6}.
\end{proof}

\begin{remark}[First algebraic correction to the minimum-diagonal contribution]
The term \(S_n\) admits the refinement
\begin{equation}
S_n=\frac{2\sqrt{2\pi}\sqrt{L_n}}{n}
-\sqrt{\frac\pi2}\,\frac{\log L_n+\log(4\pi)-2\gamma_E}{n\sqrt{L_n}}
+o\left(\frac1{n\sqrt{L_n}}\right),
\qquad L_n:=\log n,
\label{eq:10.7}
\end{equation}
where \(\gamma_E\) is Euler's constant; see, for example, Abramowitz and Stegun \cite[Section 6.1]{abramowitz-stegun}. This follows by writing \(S_n=(n-1)\E q_n+o((n\sqrt{L_n})^{-1})\), using
\[
\Phi(\sqrt2\Phi^{-1}(u))=\sqrt\pi\,u^2\left(s+\frac{3}{2s}+O(s^{-3})\right),
\qquad s=-\Phi^{-1}(u).
\]
The term \(3/(2s)\) comes directly from dividing the second-order Mills expansions
\[
u^2=\frac{e^{-s^2}}{2\pi s^2}\left(1-2s^{-2}+O(s^{-4})\right),
\qquad
\Phi(-\sqrt2s)=\frac{e^{-s^2}}{2\sqrt\pi s}\left(1-\frac12s^{-2}+O(s^{-4})\right).
\]
Their ratio is
\[
\frac{\Phi(-\sqrt2s)}{u^2}
=\sqrt\pi\,s\,\frac{1-\frac12s^{-2}+O(s^{-4})}{1-2s^{-2}+O(s^{-4})}
=\sqrt\pi\left(s+\frac{3}{2s}+O(s^{-3})\right),
\]
which gives the displayed expansion for \(\Psi(u)=\Phi(-\sqrt2s)\).
We then insert the standard lower-tail quantile expansion \cite[Section 2.2]{leadbetter}
\[
s=\sqrt{2\log(1/u)}-\frac{\log\log(1/u)+\log(4\pi)}{2\sqrt{2\log(1/u)}}+O\left(\frac{(\log\log(1/u))^2}{(\log(1/u))^{3/2}}\right).
\]
After the change of variables \(u=t/n\), the logarithmic moment is
\[
\int_0^\infty t^2e^{-t}\log t\,\dd t=\Gamma'(3)=\Gamma(3)\psi(3)=3-2\gamma_E,
\]
where \(\psi\) is the digamma function. This identity is the source of the Euler-constant term in \eqref{eq:10.7}; combined with the second-order Mills term, it yields the displayed correction. This \((\log\log n)/(n\sqrt{\log n})\) contribution is the reason the minimum-diagonal term is left exact in Theorem \ref{thm:singleton-expansion}.
\end{remark}

\begin{corollary}[Leading non-vertex and support-two constants]
The scale \(\sqrt{\log n}/n\) is the exact scale of the probability that the optimizer is not a vertex, and the support-two probability has the same leading constant:
\[
\lim_{n\to\infty}\frac{n}{\sqrt{\log n}}\Prob(\kappa_n>1)
=
\lim_{n\to\infty}\frac{n}{\sqrt{\log n}}\Prob(\kappa_n=2)
=2\sqrt{2\pi}.
\]
\end{corollary}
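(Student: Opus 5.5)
This corollary follows directly from Theorem \ref{thm:singleton-expansion}, so the plan is a short bookkeeping argument rather than new analysis. The first limit comes from \eqref{eq:10.6}: \(\Prob(\kappa_n>1)\sim2\sqrt{2\pi}\sqrt{\log n}/n\), which says exactly that \(\frac{n}{\sqrt{\log n}}\Prob(\kappa_n>1)\to2\sqrt{2\pi}\). To make this self-contained, I will recall why \eqref{eq:10.6} holds. By \eqref{eq:10.2}, \(\Prob(\kappa_n>1)=S_n+A_n+B_n+o(B_n)\). By \eqref{eq:10.4}, \(A_n=O(1/(n\sqrt{\log n}))\) and \(B_n=O(1/(n(\log n)^{3/2}))\), and both are \(o(\sqrt{\log n}/n)\). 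Proposition \ref{prop:S-asympt} gives \(S_n\sim2\sqrt{2\pi}\sqrt{\log n}/n\).

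For the support-two limit, I will use \eqref{eq:10.1} rather than \eqref{eq:10.2}. It gives the same expansion \(\Prob(\kappa_n=2)=S_n+A_n+B_n+o(B_n)\), so the same comparison yields \(\frac{n}{\sqrt{\log n}}\Prob(\kappa_n=2)\to2\sqrt{2\pi}\).

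As an alternative route, one can write \(\Prob(\kappa_n=2)=\Prob(\kappa_n>1)-\Prob(\kappa_n\ge3)\) and invoke Theorem \ref{thm:support-three-bound}. Since \(\Prob(\kappa_n\ge3)\le C(\log n)^{7/4}n^{-3/2}\), multiplying by \(n/\sqrt{\log n}\) gives \(C(\log n)^{5/4}n^{-1/2}\to0\). This route needs only the much cruder Chen--Peng-type bound \(\Prob(\kappa_n\ge3)=o(\sqrt{\log n}/n)\), not the sharper \(o(B_n)\) estimate.

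The phrase ``exact scale'' means that the limit is a finite positive constant. That holds because \(2\sqrt{2\pi}\in(0,\infty)\).

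There is no genuine obstacle here; the only care needed is to confirm that every correction term (\(A_n\), \(B_n\), and the \(o(B_n)\) and \(\Prob(\kappa_n\ge3)\) remainders) is \(o(\sqrt{\log n}/n)\). Each of these checks is an immediate comparison of powers of \(\log n\) and \(n\).
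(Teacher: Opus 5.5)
Your argument is correct and matches the paper, which gives no separate proof and reads the corollary off \eqref{eq:10.1}, \eqref{eq:10.2}, and \eqref{eq:10.6} of Theorem \ref{thm:singleton-expansion}, using that \(A_n\), \(B_n\), and the remainders are \(o(\sqrt{\log n}/n)\). One correction to your side remark: the historical Chen--Peng bound \eqref{eq:9.3} only gives \(\Prob(\kappa_n\ge3)=O((\log n)^2/n)\), which is not \(o(\sqrt{\log n}/n)\), so your alternative route still needs something sharper than Chen--Peng, such as Theorem \ref{thm:support-three-bound}, though not the full \(o(B_n)\) strength.
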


\section{Conclusion and extensions}

The finite-sample inequality framework and the boundary-layer calculation give complementary information about sparsity in GOE standard quadratic programs. The quantile-sandwich and beta-log estimates explain the finite-\(n\) scale of one-row KKT necessary conditions, while the exact two-point product decomposition identifies the leading constant. Theorem \ref{thm:singleton-expansion} also has a simple algorithmic implication: for independent GOE data, choosing the coordinate with the minimum diagonal entry solves almost every large instance. Thus independent GOE matrices should not be regarded as generic hard benchmarks for nonconvex StQP algorithms. This is consistent with the direction pursued by Bomze, Schachinger, and Ullrich \cite{bomze-schachinger-ullrich}, who, motivated by Chen, Peng, and Zhang \cite{chen-peng-zhang} and Chen and Peng \cite{chen-peng}, construct and analyze StQP instances with rich solution structures.

The proof separates deterministic two-coordinate geometry from probabilistic input. The two-point threshold is deterministic and therefore holds for any symmetric matrix. For generalized Wigner ensembles with independent diagonal and off-diagonal entries, the conditional product structure remains available, but the leading constant becomes a distribution-dependent extreme-value quantity involving the lower diagonal tail and the off-diagonal distribution. For sample-covariance or positive-semidefinite models, the threshold still holds, but entrywise dependence prevents the diagonal-conditioned product representation; see Bai--Silverstein \cite{bai-silverstein} for background on sample-covariance matrices. Band matrices and spatially dependent Wigner models would require local versions of the pair corrections; see Bourgade, Erdos, Yau, and Yin \cite{bourgade} for related random band matrix results. Heavy-tailed ensembles would replace the Gaussian quantile calculations by heavy-tail extreme-value estimates; see Ben Arous--Guionnet \cite{benarous-guionnet}.

Within the GOE model, natural refinements include the ordinary asymptotic expansion of the exact term \(S_n\), matching asymptotics for \(\Prob(\kappa_n=k)\) at fixed \(k\ge3\), anisotropic Gaussian models with non-identical variances, and universality results identifying the weakest entrywise assumptions under which the GOE constants are replaced by explicit distribution-dependent analogues.

\appendix

\section{Order-indexed one-row relaxation probabilities}\label{app:one-row}

This appendix records probabilities indexed by the diagonal order index for a one-row KKT relaxation. Here an \emph{order index} is the integer \(k\) in the ordered diagonal variable \(Z_{(k)}\). The derived probability asymptotics improve the relaxation-level estimates implicit in Chen--Peng \cite{chen-peng} and may be of independent interest. Their main role in this paper is interpretive: they explain how a one-row KKT calculation is related to \(\Prob(\kappa_n=2)\), why that calculation gives the correct order \(\sqrt{\log n}/n\), and why it does not give the exact constant.

\subsection{Relation with the support-two probability}

For \(1\le k<j\le n\), define the one-row event
\begin{equation}\label{eq:row-relaxed-pair-event}
E_{k,j}:=\{X_{k j}<2Z_{(1)}-Z_{(k)}\}.
\end{equation}
This is the row-average test
\[
\frac{Z_{(k)}+X_{k j}}{2}<Z_{(1)}.
\]
For fixed \(k\), let \(P_{n,k}\) denote the unconditional probability of this one-row event for any one ordered pair with first endpoint of order index \(k\): if \(X\sim N(0,1/2)\) is independent of the diagonal order statistics, set
\begin{equation}\label{eq:Pnk-def}
P_{n,k}:=\Prob\{X<2Z_{(1)}-Z_{(k)}\}.
\end{equation}

The exact two-coordinate event is \(F_{k j}=\{X_{k j}<\tau_{k j}\}\), with \(\tau_{k j}\) defined in \eqref{eq:4.4}. Since
\[
\tau_{k j}
=Z_{(1)}-\sqrt{(Z_{(k)}-Z_{(1)})(Z_{(j)}-Z_{(1)})}
\le 2Z_{(1)}-Z_{(k)},
\]
we have the deterministic inclusion
\begin{equation}\label{eq:F-row-inclusion}
F_{k j}\subseteq E_{k,j},\qquad 1\le k<j\le n.
\end{equation}
Therefore, using \eqref{eq:4.9},
\begin{equation}\label{eq:k2-Pnk-bound}
\Prob(\kappa_n=2)
\le
\Prob\left(\bigcup_{1\le k<j\le n}F_{k j}\right)
\le
\Prob\left(\bigcup_{1\le k<j\le n}E_{k,j}\right).
\end{equation}
For fixed \(k\), the events \(E_{k,j}\), \(j>k\), have probability \(P_{n,k}\). Hence the union bound gives the first-moment bound
\begin{equation}\label{eq:Pnk-first-moment-bound}
\Prob(\kappa_n=2)
\le
\sum_{k=1}^{n-1}(n-k)P_{n,k}.
\end{equation}
Thus \(P_{n,k}\) should be read as the contribution of a fixed ordered row index \(k\) to a one-row first-moment bound for the support-two probability. The inequalities above lose information in two ways: the threshold in \eqref{eq:row-relaxed-pair-event} is weaker than the exact curved threshold, and the final step is a union bound over many dependent candidate pairs. The main text avoids both losses by using the exact threshold and the conditional product formula.

The case \(k=1\) is special. Since \(\tau_{1j}=Z_{(1)}\), the one-row event \(E_{1,j}\) coincides with the exact event \(F_{1j}\). Hence the first order index already produces the correct leading scale, while the remaining order indices explain the size of the one-row upper bound.

\subsection{Fixed order indices}

For \(k=1\), Theorem \ref{thm:normal-minimum} with \(a=\sqrt2\) gives
\begin{equation}\label{eq:Pn1-asymp}
P_{n,1}
\sim 2\sqrt{2\pi}\,\frac{\sqrt{\log n}}{n^2}.
\end{equation}
Consequently, the first order index contributes
\[
(n-1)P_{n,1}
\sim 2\sqrt{2\pi}\,\frac{\sqrt{\log n}}{n}
\]
to the first-moment bound \eqref{eq:Pnk-first-moment-bound}. This is the same order as \(\Prob(\kappa_n=2)\) in Theorem \ref{thm:singleton-expansion}.

For \(k\ge2\), write \(U_{(1)}:=\Phi(Z_{(1)})\) and \(U_{(k)}:=\Phi(Z_{(k)})\).  The joint density of \((U_{(1)},U_{(k)})\) is
\begin{equation}\label{eq:app-u1-uk-density}
\frac{n!}{(k-2)!(n-k)!}(v-u)^{k-2}(1-v)^{n-k},
\qquad 0<u<v<1.
\end{equation}
Conditioning on \((U_{(1)},U_{(k)})=(u,v)\) gives
\begin{equation}\label{eq:Pnk-integral}
\begin{aligned}
P_{n,k}
&=\frac{n!}{(k-2)!(n-k)!}
\int_0^1\int_u^1
\Phi\!\left(\sqrt2\{2\Phi^{-1}(u)-\Phi^{-1}(v)\}\right) \\
&\hspace{35mm}\times (v-u)^{k-2}(1-v)^{n-k}\,\dd v\,\dd u .
\end{aligned}
\end{equation}

\begin{proposition}[Fixed-order-index one-row asymptotics]\label{prop:app-Pnk-fixed}
For every fixed integer \(k\ge2\),
\begin{equation}\label{eq:Pnk-asymp}
P_{n,k}\sim
\frac{24\sqrt{2\pi}}{(k+2)(k+3)}\,\frac{\sqrt{\log n}}{n^2}.
\end{equation}
\end{proposition}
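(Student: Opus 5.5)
The plan is a direct change of variables in \eqref{eq:Pnk-integral}, followed by a pointwise limit and dominated convergence. Unlike $A_n$ and $B_n$, the relevant scale here is not the boundary layer $v/u-1=O(1/\log n)$. It is the $O(1)$-ratio scale $v/u=y/x$, because the relevant threshold is linear in the quantile gap.

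\textbf{Step 1: heuristic and constant check.} Write $u=x/n$, $v=y/n$, $s=s_n(x)$, $t=-\Phi^{-1}(v)$, $h=s-t$. The integrand in \eqref{eq:Pnk-integral} becomes $\Phi(-\sqrt2(s+h))$. Mills' ratio gives, for large $s$,
\[
\Phi(-\sqrt2(s+h))\approx \frac{e^{-s^2}e^{-2sh}}{2\sqrt\pi\,s}.
\]
For fixed $x$ and fixed ratio $y/x$, the far-ratio expansion \eqref{eq:far-ratio-log-bound} from the proof of Lemma \ref{lem:quantile-gap} applies, since $h\to0$ and $t/s\to1$ there. It gives $sh=\log(y/x)+o(1)$, so $e^{-2sh}\to(x/y)^2$. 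Together with \eqref{eq:sn-exp-id}, this yields
\[
\frac{n^2}{s}\,\Phi(-\sqrt2(s+h))\to\sqrt\pi\,x^2\Big(\frac xy\Big)^2 .
\]
The density \eqref{eq:app-u1-uk-density} in the rescaled variables, including $\dd u\,\dd v=n^{-2}\dd x\,\dd y$, converges to $\frac{1}{(k-2)!}(y-x)^{k-2}e^{-y}\,\dd x\,\dd y$; the powers of $n$ cancel exactly. Using $s\sim\sqrt{2\log n}$, the expected limit is
\[
\frac{n^2}{\sqrt{\log n}}P_{n,k}\to \frac{\sqrt{2\pi}}{(k-2)!}\int_0^\infty\!\!\int_0^y x^4y^{-2}(y-x)^{k-2}e^{-y}\,\dd x\,\dd y.
\]
The inner Beta integral equals $y^{k+3}\,4!\,(k-2)!/(k+3)!$. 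The outer integral then gives $24\,(k+1)!/(k+3)!=24/((k+2)(k+3))$, which is the constant in \eqref{eq:Pnk-asymp}.

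\textbf{Step 2: make the limit rigorous by truncation and domination.} First discard the region $x>\sqrt n$ and the region $y>n/2$. Both are exponentially small, because $(1-v)^{n-k}\le e^{-c y}$ and the integrand is at most $1$. On the remaining region, use the one-sided Mills bound together with \eqref{eq:sn-exp-upper}:
\[
\Phi(-\sqrt2(s+h))\le C\frac{x^2 s}{n^2}e^{-2sh}.
\]
\begin{itemize}
\item In the near region $x\le y\le(1+\eta)x$, bound $e^{-2sh}$ by $1$.
\item In the far region $y\ge(1+\eta)x$, use Lemma \ref{lem:quantile-gap}\textup{(iii)}, $sh\ge\frac35\log(y/x)$, to get $e^{-2sh}\le (x/y)^{6/5}$.
\end{itemize}
After multiplying by $n^2/\sqrt{\log n}$ and using that $s/\sqrt{\log n}$ is bounded, the rescaled integrand is dominated by
\[
Cx^2\min\{1,(x/y)^{6/5}\}(y-x)^{k-2}e^{-cy}.
\]
This function is integrable on $\{0<x<y\}$. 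The remaining prefactor $\frac{n!}{(k-2)!(n-k)!}n^{-k}$ is bounded.

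\textbf{Step 3: pointwise limit.} On compact sets $x\in[\delta,M]$, $y/x\in[1,R]$, the limit from Step 1 holds uniformly. Equation \eqref{eq:far-ratio-log-bound}, with the $h^2/2$ and $\log(s/t)$ terms vanishing, gives $sh-\log(y/x)\to0$, and \eqref{eq:sn-exp-id} supplies the factor $2\pi x^2$. Dominated convergence from Step 2 then removes the truncation and proves \eqref{eq:Pnk-asymp}.

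\textbf{Main obstacle.} The delicate step is the uniform domination when $t$ is far from $s$, that is, when $y/x$ is large or $v$ is not in the extreme tail. This is exactly the case $t<s/2$ in Lemma \ref{lem:quantile-gap}\textup{(iii)}. The proof must check that the exponent $6/5>1$, combined with the factor $(y-x)^{k-2}e^{-cy}$, still gives integrability near the diagonal $x\to0$ and for large $y$. It appears to, since $x^2$ and $e^{-cy}$ control those ends.
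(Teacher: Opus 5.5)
Your argument is essentially the paper's: the same substitution $u=x/n$, $v=y/n$, the pointwise limit from $sh\to\log(y/x)$, $h\to0$ and \eqref{eq:sn-exp-id}, dominated convergence, and the same Beta evaluation, with your domination step spelled out more explicitly than the paper's one-line appeal to the Section~\ref{sec:toolbox} bounds.

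One small correction is needed in that step. The ratio $s_n(x)/\sqrt{\log n}$ is not bounded on $0<x\le\sqrt n$; it diverges as $x\downarrow0$. Your majorant therefore needs an extra factor $(1+\log_+(1/x))^{1/2}$, which is harmless against $x^2$.

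In fact the far-ratio bound is not needed, so the ``main obstacle'' you flag does not arise. Since $h\ge0$, the integrand is at most $\Psi(x/n)$. Then \eqref{eq:goe-tail-transfer-upper}, together with the factor $(y-x)^{k-2}e^{-cy}$, already gives an integrable majorant.
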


\begin{proof}
Set \(u=x/n\), \(v=y/n\), and first fix \(0<x<y<\infty\).  Let
\[
s=s_n(x),\qquad t=s_n(y),\qquad h=s-t.
\]
Since \(\Phi^{-1}(x/n)=-s\) and \(\Phi^{-1}(y/n)=-t\),
\begin{equation}\label{eq:app-threshold-sth}
2\Phi^{-1}(u)-\Phi^{-1}(v)=-(s+h).
\end{equation}
The fixed-\(x,y\) version of the Mills expansion used in the proof of Lemma \ref{lem:quantile-gap}, together with Lemma \ref{lem:quantile-scale}, gives
\begin{equation}\label{eq:app-fixed-rank-inputs}
sh\to\log(y/x),\qquad h^2\to0,
\qquad
\frac{e^{-s^2}}{s}
=\frac{2\pi x^2\sqrt{2\log n}}{n^2}(1+o(1)).
\end{equation}
Mills' bounds applied to \(\Phi(-\sqrt2(s+h))\), using \eqref{eq:app-threshold-sth} and \eqref{eq:app-fixed-rank-inputs}, yield
\begin{equation}\label{eq:Pnk-integrand-limit}
\Phi\!\left(\sqrt2\{2\Phi^{-1}(u)-\Phi^{-1}(v)\}\right)
=\sqrt{2\pi}\,\frac{x^4}{y^2}\frac{\sqrt{\log n}}{n^2}(1+o(1)).
\end{equation}
After the same change of variables, the density factor in \eqref{eq:Pnk-integral} satisfies
\begin{equation}\label{eq:app-order-density-limit}
\frac{n!}{(k-2)!(n-k)!}(v-u)^{k-2}(1-v)^{n-k}\,\dd v\,\dd u
\to
\frac{(y-x)^{k-2}e^{-y}}{(k-2)!}\,\dd y\,\dd x.
\end{equation}
The convergence is dominated by the tail-transfer and quantile-gap bounds in Section \ref{sec:toolbox}.  For bounded \(y\), the displayed limits are uniform on compact subsets of \(0<x<y\); for large \(y\), the factor \((1-v)^{n-k}\le e^{-ny/2n}=e^{-y/2}\) when \(v=y/n\le1/2\) gives exponential decay; and the contribution of \(v>1/2\) is exponentially small because then \((1-v)^{n-k}\le2^{-(n-k)}\).  Hence dominated convergence gives
\begin{align}
P_{n,k}
&\sim
\sqrt{2\pi}\,\frac{\sqrt{\log n}}{n^2}
\frac1{(k-2)!}
\int_0^\infty\int_x^\infty
\frac{x^4}{y^2}(y-x)^{k-2}e^{-y}\,\dd y\,\dd x.
\label{eq:app-Pnk-double-integral}
\end{align}
It remains to evaluate the integral.  Set \(x=zy\), with \(0<z<1\).  Since \(\dd x=y\,\dd z\),
\begin{align}
\int_0^\infty\int_x^\infty
\frac{x^4}{y^2}(y-x)^{k-2}e^{-y}\,\dd y\,\dd x
&=\int_0^\infty y^{k+1}e^{-y}\,\dd y
\int_0^1 z^4(1-z)^{k-2}\,\dd z \notag\\
&=\Gamma(k+2)\frac{\Gamma(5)\Gamma(k-1)}{\Gamma(k+4)} \notag\\
&=\frac{24(k-2)!}{(k+2)(k+3)}.
\label{eq:app-Pnk-beta-eval}
\end{align}
Substituting \eqref{eq:app-Pnk-beta-eval} into \eqref{eq:app-Pnk-double-integral} proves \eqref{eq:Pnk-asymp}.
\end{proof}

\subsection{Bulk order indices and the scale of the first-moment bound}

The fixed-order-index calculation identifies the boundary contribution to the first-moment bound \eqref{eq:Pnk-first-moment-bound}. We next explain why order indices far from the minimum do not alter that scale.

Let \(m_n=\lfloor n^\alpha\rfloor\), where \(0<\alpha<1\), and put \(L=\log n\).  The normal quantile approximation gives
\begin{equation}\label{eq:app-poly-rank-quantiles}
Z_{(1)}=-\sqrt{2L}+o(\sqrt L),
\qquad
Z_{(m_n)}=-\sqrt{2(1-\alpha)L}+o(\sqrt L)
\end{equation}
in probability. The following calculation is an order-level comparison included only to explain why bulk order indices do not change the boundary scale. It gives
\begin{equation}\label{eq:app-poly-rank-threshold}
2Z_{(1)}-Z_{(m_n)}
=-\sqrt{2L}\bigl(2-\sqrt{1-\alpha}\bigr)+o(\sqrt L).
\end{equation}
Set
\[
a_\alpha:=2-\sqrt{1-\alpha}.
\]
Because \(X\sim N(0,1/2)\), the lower tail of \(X\) at \(-\sqrt{2L}a_\alpha+o(\sqrt L)\) is
\begin{align}
\Prob\{X< -\sqrt{2L}a_\alpha+o(\sqrt L)\}
&=\Phi\{-2a_\alpha\sqrt L+o(\sqrt L)\} \notag\\
&=\exp\{-2a_\alpha^2L+o(L)\}.
\label{eq:app-poly-rank-tail}
\end{align}
Thus
\begin{equation}\label{eq:bulk-transition}
P_{n,\lfloor n^\alpha\rfloor}=n^{-c(\alpha)+o(1)},
\qquad
c(\alpha):=2\bigl(2-\sqrt{1-\alpha}\bigr)^2.
\end{equation}
The exponent satisfies \(c(0)=2\), and for \(0<\alpha<1\),
\begin{equation}\label{eq:app-calpha-gap}
c(\alpha)>2+\alpha.
\end{equation}
The inequality \eqref{eq:app-calpha-gap} follows directly by differentiating the function \(f(\alpha)=c(\alpha)-2-\alpha\): we have \(f(0)=0\) and \(f'(\alpha)=4/\sqrt{1-\alpha}-3\ge1\) on \((0,1)\).
A block of \(O(n^\alpha)\) order indices near \(m_n\), together with all possible partners \(j>k\), contributes at most
\begin{equation}\label{eq:app-poly-block}
n^{1+\alpha}P_{n,\lfloor n^\alpha\rfloor}
=n^{1+\alpha-c(\alpha)+o(1)}=o(n^{-1+o(1)}),
\end{equation}
which is smaller than the boundary scale \(\sqrt{\log n}/n\). The extra factor \(n\), compared with a count over order indices alone, accounts for the possible second endpoint \(j\) in each candidate pair.

For order indices proportional to \(n\), the suppression is stronger. If \(k\ge \varepsilon n\), then \(Z_{(k)}\) is bounded below by a constant depending on \(\varepsilon\) with exponentially high probability, while \(Z_{(1)}=-\sqrt{2\log n}+o(\sqrt{\log n})\). Thus the one-row threshold is
\[
2Z_{(1)}-Z_{(k)}=-2\sqrt{2\log n}+O(1)+o(\sqrt{\log n}).
\]
For \(X\sim N(0,1/2)\), this gives
\[
P_{n,k}\le n^{-8+o(1)}
\]
uniformly for \(k\ge\varepsilon n\). The power \(-8\) is simply the Gaussian tail exponent associated with the leading threshold \(-2\sqrt{2\log n}\); any fixed exponent strictly larger than \(2\) would already suffice for negligibility after summing over at most \(n^2\) candidate pairs. On the exceptional event where the displayed quantile behavior fails, the order-statistic tail is exponentially small. Hence the total contribution from this range is negligible.

In contrast, \eqref{eq:Pn1-asymp} and \eqref{eq:Pnk-asymp} show that each fixed order index \(k=1,2,\ldots,K\) has one-row probability of order \(\sqrt{\log n}/n^2\). For such a fixed \(k\), there are \(n-k\) choices of the second endpoint \(j>k\), and hence
\[
(n-k)P_{n,k}=\Theta\left(\frac{\sqrt{\log n}}{n}\right).
\]
The fixed-order constants also show how much the one-row relaxation overcounts. The first order index contributes the constant \(2\sqrt{2\pi}\). For fixed \(k\ge2\), Proposition \ref{prop:app-Pnk-fixed} gives the constants
\[
\frac{24\sqrt{2\pi}}{(k+2)(k+3)},
\]
and
\[
\sum_{k=2}^{\infty}\frac{24}{(k+2)(k+3)}=6.
\]
Thus the fixed-order part of the one-row first-moment bound has constant \(8\sqrt{2\pi}\), four times the exact leading constant \(2\sqrt{2\pi}\) in Theorem \ref{thm:singleton-expansion}. This discrepancy quantifies the loss from replacing the curved two-coordinate threshold by the one-row threshold.

Thus the sum \(\sum_k(n-k)P_{n,k}\) is governed, at the level of order, by boundary order indices. Combined with \eqref{eq:k2-Pnk-bound}, this is the precise connection between \(P_{n,k}\) and \(\Prob(\kappa_n=2)\): the one-row calculation gives an upper bound at the correct order \(\sqrt{\log n}/n\), whereas the exact support-two asymptotics in the main text require the conditional product over the sharper curved events \(F_{k j}\).

\section*{Acknowledgments}
The author thanks Johannes Milz for helpful comments on the presentation.

\end{document}